\theoremstyle{plain}
\newtheorem{theorem}{Theorem}
\newtheorem{proposition}{Proposition}[section]
\newtheorem{lemma}[proposition]{Lemma}
\newtheorem{definition}{Definition}[section]
\theoremstyle{definition}
\newtheorem{remark}{Remark}[section]
\newcommand{\one}{\mathbf{1}}
\numberwithin{equation}{section}
\newcommand{\grad}{\nabla}
\newcommand{\R}{\mathbb{R}}
\newcommand{\N}{\mathbb{N}}
\newcommand{\Z}{\mathbb{Z}}
\newcommand{\T}{\mathbb{T}}
\newcommand{\dee}{\mathrm{d}}
\newcommand{\ds}{\dee s}
\newcommand{\dt}{\dee t}
\newcommand{\dx}{\mathrm{d} x}
\newcommand{\dz}{\dee z}
\renewcommand{\P}{\mathbf{P}}
\newcommand{\EE}{\mathbf E}
\newtheorem*{lemma*}{Lemma}
\theoremstyle{definition}
 \def\Xint#1{\mathchoice
       {\XXint\displaystyle\textstyle{#1}}%
       {\XXint\textstyle\scriptstyle{#1}}%
       {\XXint\scriptstyle\scriptscriptstyle{#1}}%
       {\XXint\scriptscriptstyle\scriptscriptstyle{#1}}%
       \!\int}
    \def\XXint#1#2#3{{\setbox0=\hbox{$#1{#2#3}{\int}$}
         \vcenter{\hbox{$#2#3$}}\kern-.5\wd0}}
    \def\dashint{\Xint-}
\begin{document}
	
	\title{Optimal enhanced dissipation and mixing for a time-periodic, Lipschitz velocity field on $\mathbb{T}^2$} 
	\author{Tarek M. Elgindi, Kyle Liss, and Jonathan C. Mattingly}
	
	\maketitle
	
	\begin{abstract}
 We consider the advection-diffusion equation on $\T^2$ with a Lipschitz and time-periodic velocity field that alternates between two piecewise linear shear flows. We prove enhanced dissipation on the timescale $|\log \nu|$, where $\nu$ is the diffusivity parameter. This is the optimal decay rate as $\nu \to 0$ for uniformly-in-time Lipschitz velocity fields. We also establish exponential mixing for the $\nu = 0$ problem.  
	\end{abstract}
	
	\setcounter{tocdepth}{1}
	{\small\tableofcontents}
	
	\section{Introduction} \label{sec:intro}

	Understanding the dynamics of a diffusive scalar  advected by an incompressible flow is a fundamental problem in fluid mechanics with relevance to both engineering and the natural sciences. It is often possible in applications to treat the scalar as a \textit{passive tracer}; i.e., to assume it has no feedback on the fluid flow. The relevant model is then the advection-diffusion equation
\begin{equation} 
\label{eq:advectiondiffusion}
\begin{cases}
\partial_t f + u \cdot \grad f = \nu \Delta f, \\ 
f|_{t=0} = f_0
\end{cases}
\end{equation} 
posed on a suitable spatial domain, which we will always take to be the two-dimensional torus $\T^2 = \R^2 \setminus \Z^2$. Here, $0 \le \nu \ll 1$ is a fixed diffusivity constant, $u :[0,\infty) \times \T^2 \to \R^2$ is a
given divergence-free velocity
field, and $f :[0,\infty) \times \T^2 \to \R$ is the scalar
unknown. In applications, $f$ may represent, for example,
temperature or the concentration of a chemical in water. 
The primary purpose of this paper is to study the quantitative decay rate of solutions with $\nu > 0$ for a particular velocity field $u$, which we also show defines an optimally mixing dynamical system on $\T^2$. 

We first recall that since $u$ is divergence free, the spatial average
of any solution to \eqref{eq:advectiondiffusion} is conserved and in
fact all $L^2$ solutions with $\nu > 0$ converge strongly to their
spatial average as $t \to \infty$. Indeed, a simple energy estimate
using just the incompressibility of $u$ gives 
\begin{equation} \label{eq:heatwgrad}
\frac{1}{2}\frac{d}{dt}\left\|f(t)-\dashint f_0 \right\|_{L^2}^2 = -\nu\|\grad f(t)\|_{L^2}^2,
\end{equation}
where $\dashint f_0=\frac{1}{\text{Vol}(\T^2)} \int_{\T^2} f_0(z) \dz$ denotes the spatial average. Hence, the Poincar\'{e} inequality implies that
\begin{equation}\label{eq:heat}
\left\|f(t)-\dashint f_0 \right\|_{L^2} \le e^{-\nu t}\left\|f_0 - \dashint f_0\right\|_{L^2}.
\end{equation}
When $u \equiv 0$, reducing \eqref{eq:advectiondiffusion} to the diffusion/heat equation, each nonzero Fourier mode of the solution decays independently at a frequency-dependent rate and the $\nu^{-1}$ decay timescale of \eqref{eq:heat} is in general sharp. An intriguing feature of \eqref{eq:advectiondiffusion} is that when diffusion and advection are both present, the formation of small scales induced by stirring can cause solutions to dissipate energy much faster than the heat equation as $\nu \to 0$, greatly accelerating the convergence in \eqref{eq:heat}. This phenomenon is often referred to as \textit{enhanced dissipation}. Intuitively, advection results in a conservative transfer of energy to high frequencies, which the diffusion damps much more effectively. Enhanced dissipation can be defined precisely as solutions to \eqref{eq:advectiondiffusion} exhibiting a decay timescale that is $o(\nu^{-1})$ as $\nu \to 0$.

\begin{definition}\label{EnhancedDissipation} 
A divergence-free and time-dependent velocity field $u:[0,\infty)\times \mathbb{T}^2\to\mathbb{R}^2$ is said to be {\bf dissipation enhancing} if there exists a constant $C\geq 1$ and a rate function $\delta(\nu):(0,\infty)\rightarrow (0,\infty)$ with \[\lim_{\nu\rightarrow 0}\frac{\nu}{\delta(\nu)}=0\] so that for all $f_0 \in L^2$ the solution $f$ of \eqref{eq:advectiondiffusion} satisfies 
  \begin{equation}
  \left\|f(t)-\dashint f_0\right\|_{L^2}\leq C e^{-\delta(\nu) t} \left\|f_0-\dashint f_0\right\|_{L^2}
  \end{equation} 
  for all $t\geq 0$.
	\end{definition}

\noindent While enhanced dissipation is well understood in certain cases, e.g. shear flows \cite{BCZ17, ANB22, Wei21, He22, CZD20, CZDr19, GCZ21}, the effects of diffusion and advection naturally do not commute and understanding the interplay between them in full generality is extremely challenging. In fact, even though it is straightforward to prove that the optimal rate function for a uniformly-in-time Lipschitz velocity field is $\delta(\nu) \approx |\log \nu|$, the only flows we are aware of that achieve this rate are those constructed in \cite{BBPSdiss} from generic solutions to various stochastically forced fluid models.

Our main goal in this paper is to introduce an explicit, time-periodic
velocity field that is uniformly-in-time Lipschitz and whose dynamics we can understand deeply enough to establish \textit{sharp} dissipation enhancement estimates. We consider in particular alternating piecewise linear shear flows. For $\alpha > 0$, let $H_\alpha:\T^2 \to \R^2$ and $V_\alpha:\T^2 \to \R^2$ (with $\T \cong [0,1])$ denote the shear flows
	\begin{equation}
	H_\alpha(x,y) = \begin{pmatrix}
	-2\alpha |y-1/2| \\ 0 
	\end{pmatrix} \quad \text{and} \quad 
	V_\alpha(x,y) = \begin{pmatrix}
	0 \\ -2\alpha |x-1/2|
	\end{pmatrix}.
	\end{equation}
	We then define the Lipschitz continuous (in space), divergence-free velocity field $u_\alpha: [0,\infty) \times \T^2 \to \R^2$ by
	\begin{equation} \label{eq:ualpha}
	u_\alpha(t,x,y) = \begin{cases} 
	V_\alpha(x,y)  & t\in[0,1/2) \\
	H_\alpha(x,y) & t\in [1/2,1)
	\end{cases}
	\end{equation}
	for $t \in [0,1)$ and extended for $t \ge 1$ to be time periodic with a period of one. An important feature of $u_\alpha$ is that it generates a \textit{uniformly hyperbolic} map on $\T^2$ with singularities for $\alpha$ large. We mention that variations on the velocity field $u_\alpha$ have been considered in the recent works \cite{ZlatosNumerical, Hill22, Hill23}, which will be discussed more in Section~\ref{sec:literature} below.
 
 Our main result is stated as follows.
	
\begin{theorem} \label{thrm:enhanced} 
If $\alpha$ is a sufficiently large even integer, then the velocity
field $u_\alpha$ is dissipation enhancing with the optimal rate
function $$\delta(\nu) = \frac{c}{|\log \nu|},$$
where $c > 0$ is a constant depending on $\alpha$ but not $\nu$. 
\end{theorem}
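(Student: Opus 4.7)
My plan is to reduce the enhanced dissipation estimate to an exponential mixing statement for the inviscid ($\nu=0$) transport dynamics, and then invoke a general mixing-to-dissipation principle. The upper bound $\delta(\nu) \lesssim 1/|\log \nu|$ is classical for uniformly-in-time Lipschitz velocity fields (going back to work of Seis and Zillinger via an $H^{-1}$ dual argument), so the content of Theorem~\ref{thrm:enhanced} is the matching lower bound. The bridge is the now-standard abstract result (in the spirit of Coti Zelati--Delgadino--Elgindi and Blumenthal--Coti Zelati--Punshon-Smith--Souganidis) that if the pull-back operator of the inviscid flow decorrelates mean-zero data in $H^{-1}$ at an exponential rate, then the advection-diffusion semigroup decays in $L^2$ at the optimal logarithmic rate. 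The whole argument therefore splits into: (i) analyze the period-one map, (ii) prove exponential mixing for $\nu=0$, (iii) upgrade to enhanced dissipation.

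\textbf{Hyperbolicity of the period map.} Let $\Phi_\alpha : \T^2 \to \T^2$ denote the time-one flow of $u_\alpha$. Because $u_\alpha$ is piecewise linear in space and constant on each half period, $\Phi_\alpha$ is piecewise affine, with singularity set given by a finite union of line segments coming from the preimages of $\{x=1/2\}$ and $\{y=1/2\}$. On each smooth cell, a direct computation gives
\begin{equation}
D\Phi_\alpha \;=\; \begin{pmatrix} 1 & -\alpha \sigma_y \\ 0 & 1 \end{pmatrix}\begin{pmatrix} 1 & 0 \\ -\alpha \sigma_x & 1 \end{pmatrix} \;=\; \begin{pmatrix} 1+\alpha^2\sigma_x\sigma_y & -\alpha\sigma_y \\ -\alpha\sigma_x & 1 \end{pmatrix},
\end{equation}
with $\sigma_x,\sigma_y \in \{\pm 1\}$ locally constant. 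This matrix has unit determinant and, for $\alpha$ large, eigenvalues of modulus $\sim \alpha^2$ and $\sim \alpha^{-2}$. I would exhibit explicit stable and unstable cone fields invariant under $D\Phi_\alpha$ and $D\Phi_\alpha^{-1}$ away from the singularity set, establishing $\Phi_\alpha$ as a uniformly hyperbolic linked-twist map with singularities.

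\textbf{Mixing and enhanced dissipation.} To pass from hyperbolicity to exponential mixing for $\nu=0$, I would apply the theory of piecewise hyperbolic maps with singularities --- through a Young tower construction, a spectral-gap argument in an adapted anisotropic Banach space, or a direct coupling scheme --- to obtain exponential decay of correlations for Lipschitz observables. A mollification/interpolation argument then converts this into
\begin{equation}
\|f(n)\|_{H^{-1}} \;\le\; C e^{-\gamma n} \|f_0\|_{L^2},
\end{equation}
for mean-zero $f_0 \in L^2$, where $f$ solves the $\nu=0$ transport equation and $\gamma=\gamma(\alpha)>0$. Given this input, the enhanced dissipation follows by the standard bootstrap over a window of length $T$: one compares the diffusive solution with the inviscid one using a Duhamel-type estimate $\|f^\nu(T)-f(T)\|_{H^{-1}} \lesssim \sqrt{\nu T}\|f_0\|_{L^2}$ obtained from parabolic smoothing, and then splits into low and high Fourier modes at a scale $\sim (\nu T)^{-1/2}$: the high modes are damped by the heat kernel, while the low modes are controlled through $H^{-1}$ mixing plus the Bernstein inequality. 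Choosing $T \sim \gamma^{-1}|\log \nu|$ balances the two regimes and yields a per-window contraction factor strictly less than $1$; iterating over $\lfloor t/T\rfloor$ windows gives the claimed rate $\delta(\nu)=c/|\log\nu|$.

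\textbf{Main obstacle.} The technically heaviest step is the rigorous proof of exponential mixing for $\Phi_\alpha$, rather than the analytic passage from mixing to dissipation. The geometric picture of a strongly twisted linked-twist map is suggestive, but singularity curves can fragment unstable manifolds and obstruct the straightforward invariance of cones under iteration; running the Pesin/Chernov--Markarian machinery requires a quantitative growth-versus-complexity estimate ensuring that expansion dominates the proliferation of singularity pieces. The hypothesis that $\alpha$ is a sufficiently large even integer enters precisely here: largeness to guarantee enough hyperbolicity to beat the complexity growth, and the integer/parity condition to arrange that the singularity set and its iterates close up into a controlled finite union of arcs on $\T^2$, enabling the Markov-like structure needed for the mixing argument.
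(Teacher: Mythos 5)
The central obstacle your proposal does not overcome is that the abstract mixing-to-dissipation passage you invoke is known to lose a logarithm: the general result of \cite{CZDr19} produces a rate $\delta(\nu) \approx |\log\nu|^{-2}$ from exponential mixing of a uniformly Lipschitz flow, not the optimal $|\log\nu|^{-1}$ claimed in Theorem~\ref{thrm:enhanced}. Your bootstrap over time windows, comparing the diffusive solution $f^\nu$ with the inviscid $f_0 \circ \phi_t^{-1}$, is essentially the scheme of \cite{CZED20, FengIyer19}, and the squared logarithm is intrinsic to it. To control the window error $\|f^\nu(T) - f(T)\|$ one must estimate a term like $\nu\int_0^T\|\Delta f^\nu\|\,\|f_0\circ\phi_s^{-1}\|\,\dee s$; since the conservative approximation has no a priori $H^1$ control and its gradient grows like $e^{CT}$, one is forced to put two derivatives on the diffusive solution, and paying for this over a window of length $T \approx |\log\nu|$ costs another factor of $|\log\nu|$. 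Exponential mixing of the $\nu=0$ flow alone is not known to yield the optimal rate, so your plan is missing the essential new idea.

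The paper circumvents this by never approximating the $\nu > 0$ dynamics by the $\nu = 0$ dynamics. It introduces the pulsed diffusion $\Phi_\nu g = e^{\nu\Delta}(g \circ T)$ and studies it through its probabilistic representation as a Markov chain generated by Gaussian-kicked maps $T_\xi^n$. The key input is the geometric mixing estimate $m(T^n_\xi(R) \cap Q) \ge \lambda\, m(R)\,m(Q)$ of Lemma~\ref{lem:weakmix}, which is proved \emph{uniformly over all noise realizations} $\xi \in \Omega$: since the kicks act by rigid translation, the uniform hyperbolicity, cone invariance, and complexity bounds are unchanged. This furnishes a Doeblin minorization for $\Phi_\nu^{n_0}$ with $n_0 \approx |\log\nu|$, hence exponential contraction of the pulsed diffusion on the optimal timescale. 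Finally, the continuous-time solution is compared not with the conservative transport but with the dissipative pulsed evolution $F_d$, which satisfies a global-in-time bound $\nu\int_0^\infty\|\grad F_d\|_{L^2}^2 \lesssim \|f_0\|_{L^2}^2$ (Lemma~\ref{lem:dissbound}); the error estimate of Lemma~\ref{lem:L2approx} can therefore place one derivative on each factor instead of two on one, which is exactly what removes the extra logarithm. Your route to decay of correlations for $T$ via Young towers or anisotropic Banach spaces is plausible and parallels how the paper derives Theorem~\ref{thrm:mix} from \cite{DemersLiverani08}, but decay of correlations for the deterministic map is neither used nor sufficient for Theorem~\ref{thrm:enhanced}; what the optimal dissipation estimate actually requires is the noise-uniform geometric mixing statement, and your proposal contains no mechanism for producing it.
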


\noindent The fact that the decay timescale in Theorem~\ref{thrm:enhanced} is optimal with respect to scaling in $\nu$ for Lipschitz $u$ can be proven easily with \eqref{eq:heatwgrad} and the fact that $\|\grad f(t)\|_{L^2}$ can grow at most exponentially fast.

The estimate in \eqref{eq:heat} relies only on the dissipation produced by the Laplacian and ignores the effects of the transport term. One must leverage the mixing produced from the advection by $u$ to obtain the optimal enhanced dissipation in Theorem~\ref{thrm:enhanced}. To this end, we study the large scale mixing properties of
dynamics which hold for $\nu=0$. 
Mixing refers, roughly speaking, to chaotic stretching and folding. An initially localized concentration of scalar tends to filament and spread throughout the entire domain, resulting in the conservative transfer of energy to higher Fourier modes. Precisely, a divergence-free and time-dependent velocity field $u:[0,\infty)\times \mathbb{T}^2\to\mathbb{R}^2$ is said to be \textit{mixing} if for every $f_0 \in L^2$ the solution to the advection equation 
\begin{equation}
	\label{eq:advection}
	\begin{cases} 
	 \partial_t f+u\cdot\nabla f=0, \\ 
	 f|_{t=0} = f_0
	 \end{cases}
	\end{equation}
 converges weakly in $L^2$ as $t \to \infty$ to its spatial average. 

As a corollary to the proof of Theorem~\ref{thrm:enhanced} and an existing result in hyperbolic dynamics \cite{DemersLiverani08}, we are able to obtain optimal mixing estimates for the velocity field $u_\alpha$. To state this result we begin by recalling a quantitative notion of mixing rate. For a given divergence-free velocity field $u$, let $\phi_t:\mathbb{T}^2\rightarrow\mathbb{T}^2$ denote the associated flow map, which solves
 \begin{equation}
   \label{eq:flowmap}
\left\{\begin{aligned}
 &\tfrac{d}{dt} \phi_t(z) =u(t,\phi_t(z)), \\
&\phi_0 (z)=z \in \mathbb{T}^2.
 \end{aligned}\right.
 \end{equation}
 For any Lipschitz $u$, $\phi_t$ is well defined and a (Lebesgue)
 measure-preserving homeomorphism for each $t \ge 0$. The solution to
 the pure advection equation \eqref{eq:advection} is then given by \begin{equation} \label{eq:transportsol}
 f(t) = f_0 \circ \phi_t^{-1}.
 \end{equation}
From \eqref{eq:transportsol}, one can naturally quantify mixing in terms of correlation decay of sufficiently regular observables for the dynamical system on $\T^2$ generated by $\phi_t$. We are interested here in \textit{exponential mixing}, as this is easily seen to be the optimal rate for uniformly-in-time Lipschitz velocity fields.

\begin{definition}\label{ExponentialMixing}
		A Lipschitz and divergence-free velocity field $u$ is said to be an {\bf exponential mixer} if there exist $C, c>0$ so that
		$$ \left|\;\dashint (f \circ \phi_t^{-1})\, g - \dashint f \;\dashint g\;\right| \le C\exp(-c t)\|f\|_{C^1}\|g\|_{C^1} $$
   for any $f,g\in C^1(\mathbb{T}^2)$. This rate is optimal for uniformly-in-time Lipschitz $u$.
	\end{definition}

 \begin{remark}
     We mention that while there are many simple examples of exponentially mixing \textit{maps}, for example Arnold's cat map and the Baker's map, exponentially mixing \textit{flows} as we consider here are harder to construct.
 \end{remark}

\begin{remark} \label{rem:rectanglemix}
Exponential mixing in the sense of Definition~\ref{ExponentialMixing} is equivalent to the existence of constants $C, c > 0$ such that for any $N \in \N$ and squares $R,Q \subseteq \T^2$ of side length $2^{-N}$ there holds
\begin{equation} \label{eq:rectanglemix}
|\,m(\phi_n^{-1}(R) \cap Q) - m(R)\,m(Q)\,| \le Ce^{-cn}
\end{equation}
for all $n \ge CN$, where $m$ denotes Lebesgue measure on $\T^2$. In other words, exponential mixing means that any square at a given length scale $2^{-N}$ that is pushed through the dynamics becomes roughly evenly distributed over $\T^2$ at the spatial resolution $2^{-N}$ after time approximately $N$. This is the main perspective on mixing and its relationship to dissipation enhancement that we will utilize in this paper.
\end{remark}

We can now state our mixing result for $u_\alpha$.

\begin{theorem} \label{thrm:mix}
    If $\alpha$ is a sufficiently large even integer, then $u_\alpha$ is exponentially mixing in the sense of Definition~\ref{ExponentialMixing}.
    \end{theorem}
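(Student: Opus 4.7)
The plan is to reduce the continuous-time mixing statement to exponential decay of correlations for the time-one map $\phi_1$, and then apply the Demers--Liverani theorem for piecewise hyperbolic two-dimensional maps with singularities. Since $u_\alpha$ is time-periodic with period one and uniformly Lipschitz in space, for $t = n + s$ with $s \in [0,1)$ we can write $\phi_t = \phi_s \circ \phi_1^n$, and pulling back by the Lipschitz homeomorphism $\phi_s$ changes $C^1$ norms by a factor bounded independently of $n$. Thus the correlation bound in Definition~\ref{ExponentialMixing} reduces to the analogous statement for the iterates of $\phi_1$.

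Next I would analyze $\phi_1 = \phi^{H}_{1/2} \circ \phi^{V}_{1/2}$, the composition of the time-$1/2$ maps of $H_\alpha$ and $V_\alpha$, as a piecewise affine, area-preserving homeomorphism of $\T^2$. Each half-period shear is piecewise linear with derivative taking one of two signed values depending on the side of the break lines $x = 1/2$ or $y = 1/2$, and the evenness of $\alpha$ ensures that the integer shifts at these lines glue correctly on the torus. The singularity set $\mathcal{S}$ of $\phi_1$ (where the Jacobian is discontinuous) is a finite union of horizontal and vertical segments, and on each of the four smoothness domains the derivative $D\phi_1$ is a product of a signed vertical and horizontal shear of magnitude $\alpha/2$. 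These products have unit determinant and trace of order $\alpha^2$, giving eigenvalues $\Lambda \sim \alpha^2$ and $\Lambda^{-1}$. This yields uniform hyperbolicity for large $\alpha$, with a common unstable cone around the horizontal direction and a common stable cone around the vertical direction, both preserved by every branch of $D\phi_1^{\pm 1}$.

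To invoke Demers--Liverani one must additionally verify that $\mathcal{S}$ is transverse to the invariant cones and that the exponential complexity of $\phi_1^n$ is strictly less than $\Lambda$. Transversality is immediate because $\mathcal{S}$ consists of horizontal and vertical segments while the stable and unstable cones are narrow around diagonal directions for $\alpha$ large. For the complexity hypothesis, the singularity set of $\phi_1^n$ is the union of $\mathcal{S}$ with its preimages under $\phi_1^j$ for $1 \le j < n$; by the uniform hyperbolic structure each preimage step introduces only $O(\alpha)$ new transverse cuts per existing unstable curve, so the number $K_n$ of smooth components grows at most like $(C\alpha)^n$. Since $\Lambda \sim \alpha^2$, the inequality $\limsup K_n^{1/n} < \Lambda$ holds for $\alpha$ sufficiently large. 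I expect this complexity estimate to be the main obstacle, as one must carefully track how singular curves branch under iteration and rule out pathological accumulations at corners of $\mathcal{S}$; this is precisely the hyperbolic analysis that is also the core of the proof of Theorem~\ref{thrm:enhanced}.

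With these hypotheses verified, Demers--Liverani \cite{DemersLiverani08} yields exponential decay of $C^1$ correlations for the discrete dynamical system $\phi_1^n$, which combined with the reduction in the first paragraph gives exponential mixing for $u_\alpha$ in the sense of Definition~\ref{ExponentialMixing}. Optimality of the exponential rate for uniformly-in-time Lipschitz velocity fields is already noted in the definition.
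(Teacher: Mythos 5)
Your overall strategy---reduce to the time-one map $\phi_1$ via the uniformly Lipschitz property, verify the Demers--Liverani hypotheses, and invoke \cite{DemersLiverani08}---is indeed the route the paper takes. But there is a genuine gap: you never address \emph{ergodicity}. The Demers--Liverani framework (specifically their Theorem~2.8, which is what the paper cites) gives a spectral gap for the transfer operator on a suitable Banach space, but this by itself only tells you that the peripheral spectrum on the unit circle is finite-dimensional. To conclude that Lebesgue measure is the unique SRB measure and that correlations decay exponentially, one must additionally know that $T^k = \phi_1^{-k}$ is ergodic with respect to Lebesgue measure for every $k \in \N$; otherwise the system could decompose into finitely many ergodic (or even periodically permuted) components and correlations would not decay. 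The paper explicitly isolates this as the remaining nontrivial input and proves it as Lemma~\ref{thrm:Tmix} by deducing from the geometric mixing lemma (Lemma~\ref{lem:weakmix}) that for any mean-zero simple function $f$ there is an $n$ with $|\int (f\circ T^n)\,f| \le (1-\delta)\|f\|_{L^2}^2$, which rules out nontrivial invariant functions. Your proposal treats the complexity estimate as the main obstacle, but in the paper the membership of $T$ in the Demers--Liverani class is described as an ``easy consequence'' of the uniform hyperbolicity and singularity structure; the substance of the mixing proof is supplying the ergodicity, which your argument omits.

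A secondary but real issue is your description of the singularity set. You assert that $\mathcal{S}$ is ``a finite union of horizontal and vertical segments,'' but the singularity set of $\phi_1^{\pm 1}$ contains, in addition to the break lines $\{x=0\}\cup\{x=1/2\}$ (or $\{y=0\}\cup\{y=1/2\}$), the preimage of the other shear's break lines under the first half-step, which are line segments of slope depending on $\alpha$ and not axis-aligned (see \eqref{eq:singset} and Figure~\ref{fig:Sing}). Your transversality paragraph also contradicts your own earlier (correct) statement that the unstable cone is narrow about the horizontal and the stable cone about the vertical: you then say the cones are ``narrow around diagonal directions,'' which is wrong. Since some singularity segments are nearly parallel to one of the cone directions, transversality is not ``immediate'' and needs a genuine (if routine) check. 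The complexity estimate you sketch is in the right spirit and aligns with the paper's Lemma~\ref{lem:complexity}, but the proposal cannot be completed as written without the ergodicity argument and a corrected treatment of the singularity geometry.
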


\begin{remark}
The restriction that $\alpha$ be an even integer is not important for the proof of either Theorem~\ref{thrm:enhanced} or Theorem~\ref{thrm:mix}. It is merely done for convenience so that the time-1 flow map of $u_\alpha$ can be expressed in a simple way as a piecewise toral automorphism (see the beginning of Section~\ref{sec:hyperbolic}). On the other hand, the assumption that $\alpha$ be sufficiently large is necessary, at least for Theorem~\ref{thrm:mix}. Indeed, the recent work \cite{Hill23} considers the discrete time map defined by $u_\alpha$ for $\alpha$ small and shows that in this case correlations decay only at a polynomial rate. This also suggests that $u_\alpha$ can be dissipation enhancing with a rate function that is at best polynomial in $\nu$ for small $\alpha$, but such an implication is not strictly true in general.
\end{remark}

\begin{remark}
For both of our results, the velocity field can be made smooth in time with the proof unchanged by replacing $H_\alpha$ and $V_\alpha$ in the definition of $u_\alpha$ with $\varphi(t)H_1$ and $\varphi(1/2-t)V_1$ for any smooth function $\varphi:[0,1/2] \to [0,\infty)$ with compact support in $(0,1/2)$ satisfying $\int_0^{1/2}\varphi(t) \dt = \alpha$.
\end{remark}

We emphasize here that while the properties of the $\nu = 0$ dynamics are fundamental to obtaining enhanced dissipation on the $|\log \nu|$ timescale, exponential mixing is not known to imply sharp dissipation enhancement for the $\nu > 0$ problem in general. There is a general result
\cite{CZDr19} that one can take $\delta(\nu)= C |\log\nu|^{-2}$ for \emph{any} Lipschitz
continuous exponentially mixing flow. This is not quite optimal because of the 2 in the exponent of the logarithm, which seems very hard to remove in general because of difficulties with approximating the $\nu > 0$ dynamics by the $\nu = 0$ problem. In our proof of Theorem~\ref{thrm:enhanced} we consider the $\nu > 0$ dynamics directly from the viewpoint of Markov processes and the stochastic representation of  \eqref{eq:advectiondiffusion} and do not explicitly use exponential mixing. We rely only on proving that a fixed fraction of mass of any initial scalar is mixed to the diffusive $\sqrt{\nu}$ scale after time $t \approx |\log \nu|$ and moreover that this mixing crucially persists in some sense for the $\nu > 0$ dynamics. This is described by Lemma~\ref{lem:weakmix}, which we are able to prove in a direct and self-contained way. 

\subsection{Previous results} \label{sec:literature}
The study of mixing and dissipation enhancing flows has seen an explosion of work in the last few decades. In this section, we will outline the existing works and then discuss how the present contribution fits into the picture.

\subsubsection{PDE Literature}

Early work on enhanced dissipation was done by Kelvin \cite{Kelvin} and Kolmogorov \cite{Kolmogorov34}. An important foundational work is the paper \cite{CKRZ08} in which the authors establish, as a corollary of a more general result, necessary and sufficient conditions for \emph{autonomous} flows to be dissipation enhancing. Extensions to the case of time-periodic flows were established in \cite{KZS08,Zlatos10}. Building off of \cite{CKRZ08}, the authors of \cite{CZED20,FengIyer19} established quantitative enhanced dissipation rates based on quantitative mixing rates. A restricted form of enhanced dissipation has also been established in a number of specific settings like for shear flows \cite{BCZ17, Wei21,ANB22, He22, WCZC20, GCZ21} in various domains and using a variety of proofs as well as circular flows \cite{FengMazz22,CZD20}, cellular flows \cite{IyerZhou22, IyerXuZlatos21}, and general flows with autonomous Hamiltonian \cite{V21,BCZM22}. Enhanced dissipation for certain shear flows has also been established in various nonlinear settings \cite{CZEW20,WeiZhang19,MasmoudiZhao19, BGM17, WeiZhangZhao20, BGM20}. 

A number of works on mixing in the PDE literature have also appeared recently in relation to Bressan's mixing conjecture \cite{Bressan03}. There, a central question is whether \emph{low regularity} velocity fields can mix faster than exponentially. This is still open in general (in particular in the setting of Bressan's conjecture), though significant progress has been made both on the positive side \cite{CripDeLellis08, IyKis14, Cooperman22} and the negative side \cite{ElgindiZlatos19, YaoZlatos17,ACM19} . For additional works focused on optimal mixing subject to certain physical constraints on the flow, see e.g. \cite{Lin11,Lunasin12}.

\subsubsection{Chaotic Dynamics Literature}

The invariant manifold structure and statistical properties of hyperbolic maps have been studied extensively in the dynamics literature \cite{Liverani95, Chernov92, ChernovZhang05, Chernov99, BuniChernovSinai91, Young98, KatokStrelcyn,DemersLiverani08, BaladiDemersLiverani18}. Exponential decay of correlations is known in various situations, including certain examples of billiards \cite{Young98, Chernov99, BaladiDemersLiverani18} and some piecewise linear maps \cite{Chernov92, Hill22}. Mixing has also seen new developments in the context of \textit{random} dynamical systems. The first examples of spatially smooth exponentially mixing flows on the torus were constructed in \cite{BBPSchaos,BBPSmix} from solutions to various stochastic fluid models with sufficiently non-degenerate noise. These flows are also known to give enhanced dissipation on the $|\log \nu|$ timescale \cite{BBPSdiss}. Note that the velocity fields studied in \cite{BBPSchaos, BBPSdiss, BBPSmix} are smooth but are not uniformly bounded in time. Motivated by questions in passive scalar turbulence, a simple candidate for an exponentially mixing flow which is both bounded and spatially smooth was introduced and studied numerically by Pierrehumbert in the classical work \cite{Pierrehumbert94}. Pierrehumbert's model, which consists of alternating sine shears with random phases, was proven to be an exponential mixer just recently \cite{BGCZ22}. The general random dynamics framework developed in \cite{BGCZ22} has since been applied to alternating sine shears with different randomization procedures \cite{Cooperman22}. 

\subsubsection{Comparison with existing works}

In two spatial dimensions, there cannot exist a dissipation enhancing
flow with autonomous velocity field on the torus \cite{CKRZ08}. This
is because, under suitable conditions, divergence-free vector fields
on $\mathbb{T}^2$ admit a periodic Hamiltonian, which would be
preserved under the flow. Because of this, the velocity field must be
sufficiently complicated for the flow to be dissipation enhancing,
especially so if we want the flow to enjoy sharp decay rates. A
natural step after looking for a flow with autonomous velocity field
is to look for one with a time-periodic velocity field. Here, the
restrictions imposed by the Hamiltonian structure are relaxed, though
not entirely removed. It follows from our construction that the simple
(time-periodic) alternation of two autonomous and Lipschitz continuous
velocity fields can lead to optimal dissipation enhancement (and
mixing). As remarked earlier, the only previous example of dissipation
enhancement on the $|\log\nu|$ timescale that we know of are the flows
constructed as solutions to stochastic PDEs \cite{BBPSdiss}, which
while spatially regular are neither time-periodic nor uniformly
bounded. It appears that Theorem~\ref{thrm:enhanced} provides the
first example in the setting of time-periodic flows and also in the
setting of uniformly Lipschitz flows. An instance of convergence on
the $|\log \nu|$ timescale in a discrete-time setting has very
recently appeared for one-sided Bernoulli shifts on the torus with
axis-aligned cylinder sets perturbed by small noise
\cite{IyerNolanLu23} (here, $\nu$ is the parameter that controls the
strength of the noise, which plays a role analogous to the Laplacian
in \eqref{eq:advectiondiffusion}). Such maps however cannot be
realized as the time-1 flow map of a velocity field on
$\T^2$. Additionally the analysis \cite{IyerNolanLu23} seems not to
apply to invertible dynamics. We mention that the viewpoint of analyzing enhanced dissipation in terms of the convergence rate for an associated Markov process, which we utilize in our work, is present also in \cite{IyerNolanLu23} as well as \cite{IyerZhou22}.

In regards to exponential mixing, our construction can be contrasted
to the examples in \cite{Cooperman22, BGCZ22} where the velocity has
\emph{analytic} regularity in space, though the velocity is not
time-periodic and is based on \emph{random} switching in time while
ours is deterministic and periodic. The construction can also be contrasted with the previous work \cite{ElgindiZlatos19}, where less regular exponential mixers were constructed by finding a velocity field whose time-1 map is the Baker's transformation. The velocity field considered in the present paper was analyzed numerically in \cite{ZlatosNumerical} and observed to be exponentially mixing when $\alpha$ is not too small.  Theorem~\ref{thrm:mix} above establishes exponential mixing, at least if $\alpha$ is large enough. The fact that exponential mixing \textit{does not} occur for small $\alpha$ was proven in the very recent paper \cite{Hill23}.
Most similar to our setting is the recent work \cite{Hill22}, in which
the general theorems from \cite{ChernovZhang05, KatokStrelcyn} are
used to obtain exponential mixing for a class of non-monotone,
piecewise linear alternating shear flows without the dissipation ($\nu=0$). The
shears have fixed amplitude and are taken sufficiently asymmetric to
ensure good mixing properties of the map, whereas in our example the
shears are symmetric and uniform hyperbolicity is guaranteed by taking
the amplitude large. We provide a short and self-contained proof of the
mixing described by Lemma~\ref{lem:weakmix}, which is particularly well suited for
proving our principle result concerning the dissipation enhancement as $\nu
\rightarrow 0$.

\subsection{Discussion of the proof}
The proof of the Theorem~\ref{thrm:enhanced} is based primarily on considering the ``pulsed'' diffusion
obtained by applying the dissipation discretely in time and a careful study of the chaotic properties of the dynamical system generated by $u_\alpha$. We then
deduce enhanced dissipation in continuous time by using
an approximation argument that compares the solution to
\eqref{eq:advectiondiffusion} with the pulsed diffusion.

It is
important to not think of this scheme as treating the $\nu>0$ dynamics as
a perturbation of the $\nu=0$ dynamics even though we are interested
in the $\nu \rightarrow 0$ limit. The effect of the noise is critical
to our results as it produces the small scale mixing. Equally important is the fact that the structures which produce the
large scale mixing at $\nu=0$ persist when the system is viewed
correctly when $\nu >0$. As we will study the pulsed diffusion using its probabilistic representation, it is best to think of the pulsed diffusion as a time splitting scheme for the stochastic differential equation (SDE) whose probability flow produces \eqref{eq:advectiondiffusion}.


We now discuss the main steps in our approach, which hinges on a few
basic ideas from probability, hyperbolic dynamics, and PDE.

\subsubsection{Probabilistic setup for the pulsed diffusion}\label{sec:ProbSetup}

The dynamics given by \eqref{eq:advectiondiffusion} and
\eqref{eq:advection} describe the spread of an initial particle
distribution $f_0$ transported by the velocity $u$ with and without diffusion. The dynamics of an
individual particle starting at $z$ without diffusion is given by
$t \mapsto \phi_t(z)$, where $\phi_t$ denotes the flow map defined in \eqref{eq:flowmap}. In the presence of diffusion,  the dynamics of an
individual particle is given by the 
SDE
\begin{align}\label{eq:SDE}
 \varphi_{t,W} (z) =z + \int_0^t u(s, \varphi_{s,W} (z))ds +\sqrt{2\nu} W_t,
\end{align}
where $W_t$ is a standard two-dimensional Brownian motion $[0,\infty)$ with $W_0 = 0$.  Then
 when $\nu >0$, 
 $f(t,z)=\EE (\,f_0 \circ\varphi^{-1}_{t,W}(z)\,)$ where  $\EE$ is the expectation over the Brownian paths.


Here and throughout the remainder of the paper we write $\phi_t$ for the flow map associated with $u_\alpha$, suppressing the dependence on $\alpha$ for simplicity. The starting point for our analysis is to consider the pulsed
diffusion
\begin{align}\label{eq:phi}
  \Phi_{\nu} g = e^{\nu \Delta}(g \circ T),
\end{align}
where $T = \phi_1^{-1}$ and $e^{\nu \Delta}$ denotes convolution with
the periodized heat kernel. Then, $\Phi_{\nu} f_0$ is an approximation
for $\EE(\, f_0\circ \varphi^{-1}_{t,W}(z)\,)$. To parallel the
pathwise/particle level description given in \eqref{eq:flowmap} and
\eqref{eq:SDE} for the pulsed diffusion, we introduce the ``kicked'' version of $T$ by $ T_{v}(z) =
T(z + \pi(v))$ for $v \in \R^2$, where $\pi:\R^2 \to \T^2$ denotes the
projection $\pi(x,y) = (x \mod 1, y \mod 1)$. Observe that if $\xi$ is
a normal random variable on $\R^2$ with mean zero and identity  covariance,
then
\begin{equation}\label{eq:phi2}
  \Phi_{\nu} g(z) = \EE_\xi(\, g\circ T_{\sqrt{2\nu} \xi}(z) \, ):=
  \int_\R g(  T_{\sqrt{2\nu} v}(z))\, \mathcal{N}(dv)
\end{equation}
for $z \in \T^2$, where $\mathcal{N}$ denotes the standard unit normal distribution on $\R^2$.

To consider iterates of $\Phi_\nu$, we introduce the following
probabilistic framework. Define the probability space $(\Omega,\mathcal{F},\P) = (\R^2, \mathcal{B}(\R^2), \mathcal{N})^\N$. Then, for $\xi = (\xi_1,\xi_2,\ldots) \in \Omega$, define the ``randomly kicked'' map 
$$T_{\xi}^n = T_{\xi_{n}} \circ \ldots \circ T_{\xi_1}.$$
For
notational convenience, we also set $T_{\nu,\xi}^n = T_{\sqrt{2\nu} \xi}^n$. Given any $z \in \T^2$, $\{T_{\nu,\xi}^n(z)\}_{n=0}^\infty$ defines a Markov chain on $\T^2$ with initial condition $z$. 
As before the connection between $T_{\nu,\xi}^n$ and $\Phi_{\nu}$ is
given by 
	\begin{equation} \label{eq:probrep}
	\Phi_{\nu}^n g(z) = \int_{\Omega} g(T^n_{\nu,\xi}(z))\P(\dee \xi). 
	\end{equation}

	\subsubsection{Uniform hyperbolicity and the cone condition}
        \label{sec:UniformHyperbolicity}%
A fundamental property of the $\nu = 0$ dynamics is the exponential separation of particle trajectories both forward and backward in time. Indeed, taking $t \in \mathbb{N}$ without loss of generality, it is not hard to see that since $\grad V_\alpha$ and $\grad H_\alpha$ are both piecewise constant, for every $z \in \T^2$ away from a measure zero set of finitely many lines we have 
\begin{equation}
  \label{eq:GradiantAs}
  	\nabla T^{N}(z) 
          :=\nabla\phi_N^{-1}(z)=\Pi_{i=1}^{N}A_{j_{i}}, 
\end{equation}
        where $j_{i}\in\{1,2,3,4\}$ and $A_{j_i}$ is one of the four matrices 
	\[A_1=\begin{pmatrix} 1+\alpha^2 & \alpha \\ \alpha & 1 \end{pmatrix} ,\qquad A_2=\begin{pmatrix} 1-\alpha^2 & \alpha \\ -\alpha & 1 \end{pmatrix},\qquad A_3=\begin{pmatrix} 1-\alpha^2 & -\alpha \\ \alpha & 1 \end{pmatrix},\qquad A_4=\begin{pmatrix} 1+\alpha^2 & -\alpha \\ -\alpha & 1 \end{pmatrix}.\] 
 Of course, the particular product depends on the point $z$. There are four possible matrices since the vertical and horizontal shears are each linear on two halves of the torus. A key property that these matrices share is that for $\alpha$ large they are all hyperbolic with ``close" expanding and contracting directions. In particular, when $\alpha \gg 1$ the expanding direction of each is very close to $e_1=(1,0)$, while the contracting direction is very close to $e_2=(0,1).$ This can be made precise by establishing that the cones
	\[C_u:=\left\{(x,y)\in\mathbb{R}^2:|y|\leq \frac{|x|}{\alpha \delta_1}\right\},\]
	\[C_{s}:=\left\{(x,y)\in\mathbb{R}^2:|x|\leq \frac{|y|}{\alpha \delta_1}\right\}\] are, respectively, forward and backward invariant under the action of any of the $A_i$. Here, $\delta_1<1$ is fixed and $\alpha$ is sufficiently large. In particular, it can be shown using basic linear algebra that for any $v_u \in C_u$ and $v_s \in C_s$ we have 
	\[|\big(\Pi_{i=1}^N A_{j_i}\big)v_u|\geq \exp(cN),\qquad
          |\big(\Pi_{i=1}^N A_{j_i}\big)^{-1}v_s|\ge \exp(cN),\] for
        $\emph{any}$ choice of the sequence $\{j_i\}_{i=1}^N,$ with
        $c>0$ a constant growing with $\alpha$. Intuitively, the fact that each $A_i$ is hyperbolic guarantees a minimal amount of expansion in the unstable direction under each iteration, while the cone condition ensures that the expanding and contracting directions do not mix to cancel out the long-time effects. The observations above imply that the discrete time dynamical system on
        $\T^2$ defined by $T = \phi_1^{-1}$ is uniformly hyperbolic for
        $\alpha$ large. A critical fact in all that follows is that
        the representation given in \eqref{eq:GradiantAs} also holds
        for $\nabla T^{N}_{\nu,\xi}(z)$, as the additive shifts only
        rigidly translate the solutions and do not change the
        gradients. 
	
	\subsubsection{Complexity of intersections of approximate
          stable and unstable foliations}\label{sec:complexityintro}
	
	The cone conditions and uniform hyperbolicity established
above allow us to infer exponentially fast gradient growth as
$t\rightarrow\infty$ for generic Lipschitz solutions to
\eqref{eq:advection}. 
This itself does not imply mixing or dissipation enhancement (at the sharp rate for the latter). For that, we need to take more advantage of the above structure to get geometric information on the evolution of areas. In view of \eqref{eq:rectanglemix}, we would like to show that squares of side length $2^{-N}$ are stretched horizontally across the whole torus in time on the order of $N$ under the \emph{forward} iterates of $T$ and similarly stretched vertically under the \emph{backward} iterates. If this were the case, we would be able to argue that 
\begin{align}
  \label{eq:recMix1}
  	m(T^{2n}(R)\cap Q) = m(T^n(R) \cap T^{-n}(Q)) \gtrsim 
m(R)\,m(Q),
\end{align}
for $n \approx N$ and any squares $R$ and $Q$ of side length $2^{-N}.$ Given the forward invariance of $C_u$ and the fact that $T$ maps any line segment to a union of line segments, it is natural to first study the expansion of short horizontal lines. The challenge here is that while a line segment $W$ tangent to the unstable cone is expanded forwards in time, it is also repeatedly cut across the singularity curves of $\grad T$ (see Figure~\ref{fig:Sing} for a representative singularity set). Thus, there is a competition between expansion and cutting, making it not immediately clear the extent to which ``long lines dominate'' in $T^n(W)$. Nevertheless, using the cone conditions and the relatively simple structure of the singularity set of $\grad T$ we are able to prove that horizontal lines are stretched across the whole torus at the correct rate. Growth bounds of this general type on unstable curves are well known to be crucial in the chaotic dynamics literature and are often referred to as ``complexity estimates.'' We follow closely here the strategy of Baladi and Demers \cite{BaladiDemers20, Demers20}, with some modifications and simplifications that are available in our specific setting. 
 
 In order to make a somewhat precise statement, let us define $\mathcal{W}_u$ to be the collection of line segments on $\T^2$ that are tangent to some $v \in C_u$.
	
\begin{lemma} [Informal Statement] \label{lem:informalcomplexity}
		Let $W\in \mathcal{W}_u$. Then, if $n\gtrsim|\log |W||$, we have that $T^n(W)$ can be written as a union of line segments in $\mathcal{W}_u$ most of which have length greater than $5/4$. In particular, if $\alpha$ is sufficiently large, they stretch across $\mathbb{T}^2.$
	\end{lemma}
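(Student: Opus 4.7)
The plan is to adapt the Baladi--Demers growth-lemma strategy \cite{BaladiDemers20, Demers20} for piecewise hyperbolic systems. The core idea is that at each iterate of $T$ any admissible curve $W' \in \mathcal{W}_u$ is simultaneously stretched, by the uniform expansion factor $\Lambda = \Lambda(\alpha)$ along vectors in $C_u$ from Section~\ref{sec:UniformHyperbolicity}, and cut by the singularity set $\mathcal{S}$ of $T$ into finitely many admissible sub-curves. For $\alpha$ large enough the stretching dominates the cutting in a quantitative sense, forcing the bulk of $T^n(W)$ to lie in long components.

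To set the stage I would verify the one-step estimate. Because the half-period flows of $V_\alpha$ and $H_\alpha$ are each piecewise affine with a single gradient-discontinuity line, the singularity set $\mathcal{S}$ of $T$ is a finite union of line segments aligned roughly with the cones $C_u$ and $C_s$. A curve $W' \in \mathcal{W}_u$ is nearly horizontal (slope bounded by $1/(\alpha\delta_1)$), so an elementary geometric count shows that if $|W'| \leq 5/4$ then $W'$ meets $\mathcal{S}$ in at most $K$ points for a small absolute constant $K$. Combined with the forward invariance of $C_u$ and the expansion bound $|T(W')| \geq \Lambda |W'|$, this gives the one-step dichotomy: $T(W')$ is a disjoint union of at most $K+1$ segments in $\mathcal{W}_u$ with total length at least $\Lambda |W'|$, and the same bound propagates to every descendant of $W$ at every iteration.

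The next step is an inductive length accounting. Write $T^n(W) = \bigsqcup_i W_i^{(n)}$ as its decomposition into maximal admissible sub-segments, and let $\Sigma_n$ denote the total length of short components (those with $|W_i^{(n)}| < 5/4$). The goal is a recursion of the form $\Sigma_{n+1} \leq (K/\Lambda)\,\Sigma_n + C$, where the second term collects the short endpoint pieces produced when a long parent is clipped by $\mathcal{S}$. Once $\Lambda > K$, arranged by choosing $\alpha$ large, this keeps $\Sigma_n$ uniformly bounded, while the total length $|T^n(W)| \geq \Lambda^n |W|$ grows geometrically until it saturates. Hence the fraction of total length supported on short components tends to zero once $n \gtrsim |\log|W||$. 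Since a long admissible segment is nearly horizontal and has length exceeding $5/4 > 1$, it must wrap across $\T^2$, giving the final sentence of the lemma.

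The main obstacle is making this short-length recursion actually contract: a naive bound $\Sigma_{n+1} \leq \Sigma_n + (5/4)K \cdot \#\{\text{long pieces at step } n\}$ fails because the number of long pieces can itself grow exponentially. The resolution, which is the heart of the Baladi--Demers argument, is a \emph{charging scheme}: each short endpoint piece created at step $n+1$ from a long parent is absorbed into that parent's length budget rather than counted separately, so that only short-from-short contributions are ``free,'' and these contract by the ratio $K/\Lambda$ per step. Verifying this charging in our piecewise linear setting is mostly bookkeeping, made manageable by the very simple structure of $\mathcal{S}$ (a bounded number of line segments aligned with $C_u$ and $C_s$) and the fact that $\Lambda$ can be taken arbitrarily large by enlarging $\alpha$.
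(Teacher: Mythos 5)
Your high-level plan---adapt the Baladi--Demers growth lemma, balance uniform expansion against cutting by $\mathcal{S}$, and take $\alpha$ large so expansion dominates---is exactly the strategy the paper uses. However, there are two concrete problems in how you propose to execute it.

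First, your one-step bound is wrong at the length scale you choose. You classify segments as short when $|W'| < 5/4$ and assert that such $W'$ crosses $\mathcal{S}$ in at most $K$ points for a small absolute constant $K$. But $\mathcal{S}^+$ contains $O(\alpha)$ lines of slope $\pm 1/\alpha$ spaced $1/(2\alpha)$ apart (see Figure~\ref{fig:Sing}), and a nearly horizontal segment of length of order $1$ meets $O(\alpha)$ of them. The paper handles this by doing the entire generation bookkeeping at the scale $\alpha^{-1}/4$ (where a segment indeed meets at most $4$ connected components of $\mathbb{T}^2 \setminus \mathcal{S}^+$) and only converting to the $5/4$ threshold at the very last generation. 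If you run the recursion directly at scale $5/4$, your constant $K$ is $O(\alpha)$, not an absolute constant, and the arithmetic must be redone with this in mind.

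Second, the recursion $\Sigma_{n+1} \leq (K/\Lambda)\Sigma_n + C$ for the forward length of short components does not hold, and the ``charging scheme'' you describe does not fix it: short-from-short contributions \emph{expand} in forward length, they do not contract by $K/\Lambda$. A short segment $V$ of forward length $\ell$ produces children of total forward length at least $\Lambda\ell$, and if $\Lambda\ell$ is still below the short threshold all of that mass remains short, giving $\Sigma_{n+1} \geq \Lambda\,\Sigma_n^{\text{(very short part)}}$. The mechanism in the paper (following Baladi--Demers) is not a contraction of short mass but a comparison of \emph{counting rate} against \emph{expansion rate}, measured in \emph{preimage length} $|T_\xi^{-n} W_i|$ rather than forward length. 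Concretely: Lemma~\ref{lem:complexity1} bounds the set $I_n$ of segments that have been short at \emph{every} step $1,\dots,n$ by noting $\# I_n \leq 4^n$ while each element has preimage length at most $(\delta_1\alpha^2)^{-n} \cdot O(\alpha^{-1})$, yielding $|T_\xi^{-n}I_n| \leq (4/(\delta_1\alpha^2))^n \, |W|$, which decays once $\alpha^2 > 4/\delta_1$. General short pieces are then handled in Lemma~\ref{lem:complexity2} by grouping according to the most recent long ancestor and applying the $I_n$ bound at the appropriate time shift, which produces a geometrically convergent sum rather than a step-by-step contraction. The preimage-length formulation is also the form that the area-formula argument of Section~\ref{sec:area} actually requires (see the precise inequality \eqref{eq:longdominate} and Remark~\ref{rem:longinterpret}), so even a correct forward-length version of your estimate would still require a further conversion step.

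Finally, a smaller omission: for the application the lemma is needed uniformly over the random kicks $\xi \in \Omega$. This costs nothing because rigid translations do not change gradients or cone invariance, but it should be stated.
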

	
\noindent A similar statement holds backwards in time for line
segments tangent to the stable cone. The complexity lemma implies that horizontal segments forward in time
\emph{must} intersect vertical segments backward in time. From this,
we can show that squares that are mapped forward in time must
intersect squares that are mapped backwards in time with a significant
area of intersection.  To make this rigorous, we rely on an argument
using the area formula from geometric measure theory.

As in the preceding section, all of the arguments made here largely
carryover without modification to the randomly kicked version
$T_{\nu,\xi}^n$, as rigid translations of the solutions do not change
the estimates made.
	
\subsubsection{Area formula argument}
\label{sec:areaFormula}

The area formula allows us to reduce the estimate of $m(T^n(R) \cap T^{-n}(Q))$ to tracking the intersection points between horizontal lines mapped through $T^n$ and vertical lines mapped through $T^{-n}$. Define the Lipschitz continuous function $F_n:\T^2 \to \T^2$ by 
$$F_n(z) = (\Pi_x \circ T^n(z), \Pi_y \circ T^{-n}(z)),$$
where $\Pi_x$ and $\Pi_y$ denote the usual projections $\Pi_x(x,y) =
x$ and $\Pi_y(x,y) = y$. 
Since the unstable direction is approximately $e_1$ and the stable direction is approximately $e_2$, the first and second coordinates of $F_n(z)$ roughly give the forward and backward dynamics respectively in
the unstable direction. More precisely, $F_n^{-1}(x,y)$ consists exactly of the intersection points between $T^n(\ell_y)$ and $T^{-n}(\ell_x)$, where $\ell_y$ denotes the horizontal line with second coordinate $y$ that spans across the entire torus and $\ell_x$ similarly denotes the vertical line spanning the torus with first coordinate $x$. Using this fact, one can show by the area formula, recalled in Section~\ref{sec:area} for the convenience of the reader, that 
\begin{align*}
   m(T^{n}(R) \cap T^{-n}(Q)) = \int_{\Pi_x(Q)}\int_{\Pi_y(R)} \left(\sum_{z \in T^n(W_{y'}) \cap T^{-n}(V_{x'})} \frac{1}{J_{F_n}(z)}\right) \dee y' \dee x',
\end{align*}
  where $W_{y'} = \Pi_x(R) \times \{y'\}$, $V_{x'} = \{x'\} \times \Pi_y(Q)$, and $J_{F_n}$ denotes the Jacobian of $F_n$. Note that $W_{y'}$ and $V_{x'}$ are horizontal and vertical lines, respectively, of length $2^{-N}$. To estimate the sum for a fixed $(x',y')$, we take $n \gtrsim N$ and decompose $T^n(W_{y'})$ and $T^{-n}(V_{x'})$ each into the union of ``mostly long lines'' as guaranteed by the complexity lemma. The precise formulation of the statement that most elements of $T^n(W_{y'})$ and $T^{-n}(V_{x'})$ span the entire torus ensures that $\#(T^n(W_{y'}) \cap T^{-n}(V_{x'}))$ balances the Jacobian factor, yielding the bound  
  \begin{equation}\label{eq:lineintersect}
  \sum_{z \in T^n(W_{y'}) \cap T^{-n}(V_{x'})} \frac{1}{J_{F}(z)} \gtrsim |W_{y'}| |V_{x'}| =  2^{-2N}, 
  \end{equation}
  from which it follows immediately that 
  \begin{equation} \label{eq:rectangleequiv}  m(T^{2n}(R) \cap Q) = m(T^n(R) \cap T^{-n}(Q)) \gtrsim m(R) \,m(Q)
  \end{equation} 
  for $n \gtrsim N$. 
  
  The bound \eqref{eq:rectangleequiv} describes a kind of exponential mixing estimate. It extends to the randomly kicked maps in the form
  \begin{align*}
       m(T_\xi^{2n}(R) \cap Q) \gtrsim m(R) \,m(Q)
  \end{align*}
  for any $\xi \in \Omega$ and is the content of Lemma~\ref{lem:weakmix}, which we refer to as the geometric mixing lemma. The motivation for this name is that it follows as a corollary that the \textit{geometric mixing scale} (see Definition~\ref{def:geomixing}) of any solution to \eqref{eq:advection} with mean-zero $f_0 \in C^1$ decays exponentially fast. The geometric mixing scale has been considered in various works \cite{YaoZlatos17, ElgindiZlatos19, ACM19}, largely in connection with Bressan's conjecture. Exponential decay of the geometric mixing scale does not imply exponential mixing in the sense of Definition~\ref{ExponentialMixing} (see e.g. discussions in \cite{Zillinger19}), but we observe that Lemma~\ref{lem:weakmix} can similarly be used to show that $T^k$ is ergodic with respect to Lebesgue measure for every $k \in \N$, from which exponential decay of correlations follows by \cite[Theorem 2.8]{DemersLiverani08}. We do not require this fact for the proof of Theorem~\ref{thrm:enhanced}, which only relies on the geometric mixing lemma for the kicked maps.


\subsubsection{Enhanced dissipation}
\label{sec:EnhDisPulsedDiff}

Using \eqref{eq:probrep} and standard ideas from the ergodic theory of Markov processes, we show that the convergence $\Phi^n_\nu g \to \dashint g$ on the $|\log \nu|$ timescale (in $L^\infty$ for bounded $g$) follows provided that for every $\xi \in \Omega$ and rectangles $R,Q$ with side length $\sqrt{\nu}$ there holds 
\begin{equation}\label{eq:noisymix} 
  m(T^n_{\xi}(R) \cap Q) \gtrsim m(R)\, m(Q) 
\end{equation}
for $n \approx |\log \nu|$. This is exactly the geometric mixing lemma applied with squares at the diffusive $\sqrt{\nu}$ length scale.

 To extend the results for the pulsed diffusion to continuous time we use an approximation scheme based on a contradiction argument. In particular, we show that \emph{if the solution to \eqref{eq:advectiondiffusion}} decays slower than the expected rate, then the pulsed diffusion well-approximates the solution to \eqref{eq:advectiondiffusion}. The dissipation for solutions to the discrete problem are then transferred to solutions of \eqref{eq:advectiondiffusion}, which gives a contradiction. Compared with previous similar arguments in \cite{CZED20, FengIyer19}, here we are comparing two \emph{dissipative} problems rather than a dissipative problem with a conservative one. This gives us more control and in particular allows us to remove the exponent $2$ from the $|\log \nu|^2$ timescale often obtained in such arguments. 

\subsection{Outline of the article}

In Section~\ref{sec:mainproof} we prove Theorems~\ref{thrm:enhanced} and~\ref{thrm:mix} assuming the geometric exponential mixing estimate described above. This includes the necessary convergence results for the pulsed diffusion and the approximation argument that uses them to conclude optimal dissipation enhancement for $u_\alpha$. 
In Section~\ref{sec:dynamics} we prove the complexity lemma and in Section~\ref{sec:area} we conclude the proof of the geometric mixing lemma using the area formula argument.

	\section{Proof of Theorems~\ref{thrm:enhanced} and~\ref{thrm:mix}} \label{sec:mainproof}

 In this section, we first state precisely the geometric mixing lemma. Then, assuming it, we complete the proof of Theorems~\ref{thrm:enhanced} and~\ref{thrm:mix}. 

 \subsection{Geometric mixing lemma} \label{sec:prelimweakmixing}

For $N \in \Z_{\ge 0}$ we define a tiling of $\T^2$ with squares at scale $2^{-N}$ by
\begin{equation}
\mathcal{R}_N = \left\{[j 2^{-N},(j+1)2^{-N}]\times[k 2^{-N},(k+1)2^{-N}]: k,j \in \Z, 0 \le k,j \le 2^{N}-1 \right\}.
\end{equation}
The geometric exponential mixing lemma says that $m(T_\xi^n(R) \cap Q) \gtrsim m(R)\, m(Q)$ for any $R,Q \in \mathcal{R}_N$, $n \gtrsim N$, and $\xi \in (\R^2)^\N = \Omega$. In the statement below and all those that follow we assume that $\alpha$ is an even integer without explicit mention of it.

\begin{lemma}\label{lem:weakmix} 
There exist $C_0,\alpha_0 \ge 1$ and $\lambda \in (0,1)$ such that for $\alpha \ge \alpha_0$, $n \ge C_0N$, $\xi \in \Omega$, and any $R,Q \in \mathcal{R}_N$ there holds 
		\begin{equation} \label{eq:weakmix}
		m(T^n_{\xi}(R)\cap Q)\ge \lambda m(R)\,m(Q).
		\end{equation}
	\end{lemma}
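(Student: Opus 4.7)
The plan is to reduce Lemma~\ref{lem:weakmix} to the complexity lemma (Lemma~\ref{lem:informalcomplexity}) via the area-formula argument sketched in Sections~\ref{sec:complexityintro}--\ref{sec:areaFormula}, carried out uniformly in the kick sequence $\xi \in \Omega$. A key observation I use throughout is that since each kick is a rigid translation, the iterate $T^n_\xi$ has exactly the same piecewise linear gradient structure as $T^n$ (cf.\ the discussion following \eqref{eq:GradiantAs}); in particular, the forward/backward invariance of the cones $C_u, C_s$, the uniform hyperbolicity bounds, and the complexity lemma transfer verbatim from the unperturbed map to $T^n_\xi$ and to $(T^n_\xi)^{-1}$.

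First I would split the iteration into halves. For convenience take $n=2m$ and write $T^n_\xi = B_\xi \circ A_\xi$ with $A_\xi = T_{\xi_m} \circ \cdots \circ T_{\xi_1}$ and $B_\xi = T_{\xi_n} \circ \cdots \circ T_{\xi_{m+1}}$. Since each $T_{\xi_i}$ preserves Lebesgue measure,
\begin{equation*}
m(T^n_\xi(R) \cap Q) = m(A_\xi(R) \cap B_\xi^{-1}(Q)).
\end{equation*}
The unstable cone of $B_\xi^{-1}$ is $C_s$, so the complexity lemma applied forward to $A_\xi$ for horizontal segments in $\mathcal{W}_u$ and to $B_\xi^{-1}$ for vertical segments in $\mathcal{W}_s$ guarantees, for $m \ge C_0 N$ and any horizontal cross-section $W_{y'} = \Pi_x(R) \times \{y'\}$ of $R$ and vertical cross-section $V_{x'} = \{x'\} \times \Pi_y(Q)$ of $Q$, that $A_\xi(W_{y'})$ and $B_\xi^{-1}(V_{x'})$ decompose into unions of segments in $\mathcal{W}_u$ and $\mathcal{W}_s$ respectively, most of which have length $\ge 5/4$ and hence span $\T^2$ in the near-horizontal or near-vertical direction.

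Next I invoke the area formula applied to $F_m(z) = (\Pi_x \circ A_\xi(z),\, \Pi_y \circ B_\xi^{-1}(z))$:
\begin{equation*}
m(A_\xi(R) \cap B_\xi^{-1}(Q)) = \int_{\Pi_x(Q)}\int_{\Pi_y(R)} \sum_{z \in A_\xi(W_{y'}) \cap B_\xi^{-1}(V_{x'})} \frac{1}{J_{F_m}(z)}\,dy'\,dx'.
\end{equation*}
The cone conditions force $F_m$ to be a local diffeomorphism away from the singular set and give $J_{F_m}(z)$ comparable to $\|\grad A_\xi(z)\,e_1\|\cdot \|\grad B_\xi^{-1}(z)\,e_2\|$, i.e.\ the product of the unstable and stable expansion rates at $z$. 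On the other hand, the complexity lemma says that the number of long pieces of $A_\xi(W_{y'})$ is comparable to $|W_{y'}|\cdot\|\grad A_\xi\,e_1\|$, since each such piece is the image of a subsegment of $W_{y'}$ of length $\approx 1/\|\grad A_\xi\,e_1\|$, and analogously for $B_\xi^{-1}(V_{x'})$. Any long image of a horizontal segment lies in $C_u$ and spans $\T^2$, while any long image of a vertical segment lies in $C_s$ and spans $\T^2$; these two families are transverse, so each pair crosses at least once. Counting intersections gives
\begin{equation*}
\#\bigl(A_\xi(W_{y'}) \cap B_\xi^{-1}(V_{x'})\bigr) \gtrsim |W_{y'}|\,|V_{x'}|\, \|\grad A_\xi\,e_1\|\cdot\|\grad B_\xi^{-1}\,e_2\|,
\end{equation*}
so the Jacobian cancels, yielding $\sum_z 1/J_{F_m}(z) \gtrsim |W_{y'}|\,|V_{x'}| = 2^{-2N}$ uniformly in $\xi, x', y'$. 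Integrating over $\Pi_x(Q)\times\Pi_y(R)$ (which has area $2^{-2N}$) gives $m(T_\xi^n(R)\cap Q) \gtrsim 2^{-4N} = m(R)\,m(Q)$, establishing the lemma with an absolute constant $\lambda$ depending only on $\alpha$ and the cone aperture.

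The main obstacle is making the matching of long pieces to intersection count rigorous and uniform in $\xi$, and handling the ``short'' pieces. The complexity lemma controls the aggregate length of the long pieces but not their spatial layout, so I have to leverage the quantitative cone condition to conclude that each long $C_u$-segment actually wraps across $\T^2$ in the $x$-direction and thus crosses each long $C_s$-segment transversally at least once. I also must verify that the short pieces, for which no such counting is available, absorb only a controllable fraction of the expansion budget so that they contribute negligibly to the intersection sum --- this is exactly the quantitative long-vs-short split built into the complexity lemma, and propagating its uniformity through the additive shifts is the delicate bookkeeping step of the full proof.
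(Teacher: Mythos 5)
Your route is essentially the paper's: split $T_\xi^{2m}$ into two halves, rewrite the intersection measure via the area formula as an integral of weighted intersection counts between forward images of horizontal cross-sections of $R$ and backward images of vertical cross-sections of $Q$, invoke the complexity lemma on both families, and cancel the Jacobian against the intersection count. This matches Lemmas~\ref{lem:areaint}, \ref{lem:lbd} and \ref{lem:jacobian} and their use in the proof of Lemma~\ref{lem:weakmix}.

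One slip worth flagging: as written, $F_m = (\Pi_x \circ A_\xi,\ \Pi_y \circ B_\xi^{-1})$ has $F_m^{-1}(x',y') = A_\xi^{-1}(\ell_{x'}) \cap B_\xi(\ell_{y'})$, which is \emph{not} the set $A_\xi(W_{y'}) \cap B_\xi^{-1}(V_{x'})$ you sum over in the displayed identity. The map you want is $F_m = (\Pi_x \circ B_\xi,\ \Pi_y \circ A_\xi^{-1})$, i.e.\ the paper's $F_{n,\xi} = (\Pi_x \circ T^n_{\theta^n\xi},\ \Pi_y \circ T^{-n}_\xi)$; only with that definition does $F_m^{-1}(x',y')$ intersected with the support of $\chi_R(A_\xi^{-1}(\cdot))\chi_Q(B_\xi(\cdot))$ equal $A_\xi(W_{y'})\cap B_\xi^{-1}(V_{x'})$. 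A second, lesser point: the Jacobian cancellation as you sketch it treats $\|\grad A_\xi\,e_1\|$ and $\|\grad B_\xi^{-1}\,e_2\|$ as constants, but they vary piecewise over the segment. The rigorous version (as in the paper) uses that $1/J_{u,n}$ is constant on each linear piece so that $1/J_{u,n}(T_\xi^{-n}(z)) = |T_\xi^{-n}W|/|W|$ for $z$ in a piece $W$, and then invokes the complexity lemma precisely in the aggregated-preimage-length form $\sum_{|W_i|\ge 5/4}|T_\xi^{-n}W_i| \ge (1-C/\alpha)|W|$; the ``number of long pieces $\approx |W_{y'}|\cdot\|\grad A_\xi\,e_1\|$'' heuristic is not what the lemma gives and is not needed.
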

	
The proof of Lemma~\ref{lem:weakmix} is the main technical step in the paper and the content of Sections~\ref{sec:dynamics} and~\ref{sec:area}.

\begin{remark}\label{rem:CouldGetUpper'}
     It is also possible to use the ideas of Sections~\ref{sec:dynamics} and~\ref{sec:area} to prove that the analogous upper bound $m(T_{\xi}^n(R) \cap Q) \le C m(R) \,m(Q)$ holds for a suitable constant $C$. The lower bound however is the fundamental estimate which describes mixing, and for the proof of Theorems~\ref{thrm:enhanced} and~\ref{thrm:mix} it is the only direction that we require.
 \end{remark}

\subsection{Probabilistic preliminaries} \label{sec:probability}

We now introduce some standard probabilistic notation that will be useful in the study of the pulsed diffusion and also recall a basic convergence theorem from the ergodic theory of Markov processes. 

Recall that for $\nu > 0$ we write $T_{\nu,\xi} = T_{\sqrt{2\nu}\xi}$
and that $T_{\nu,\xi}(z)$ defines a Markov chain for any initial
condition $z \in \T^2$. The associated Markov transition kernel is defined by 
$$\Phi_\nu(z,A) = \P\big(T_{\nu,\xi}(z) \in A\big),$$
where $A \in \mathcal{B}(\T^2)$, the Borel $\sigma$-algebra on
$\T^2$. Recall that a Markov transition kernel on a measurable space
$(X,\Sigma)$ is a mapping $Q:X \times \Sigma \to [0,1]$ such that
$Q(x,\cdot)$ is a probability measure for every $x \in X$ and $x
\mapsto Q(x,A)$ is measurable for every fixed $A \in \Sigma$. 
We similarly define $\Phi^n_\nu(z,A) = \P(T^n_{\nu,\xi}(z) \in A)$ and note that by the Markov property, for every
$n \ge 2$ we have
\begin{equation} \label{eq:Markov}
\Phi_\nu^n(z,A) = \int_{\T^2}\Phi_\nu^{n-1}(z',A)\,\Phi_\nu(z,\dee z')\,.
\end{equation}
This can be seen as simply the semigroup property $\Phi_\nu^{n+m} =
\Phi_{\nu}^{n}\Phi_\nu^m$ for the Markov semigroup $\Phi_\nu$ defined in
the introduction in \eqref{eq:phi} and \eqref{eq:phi2} since
$\Phi_\nu(z,A) =  \Phi_\nu\one_A(z)$, where the indicator function
$\one_A(z)$ is one if $z \in A$ and zero otherwise. It will also be convenient for us later on to define the shift operator $\theta:\Omega \to \Omega$, which acts on $\xi = (\xi_1,\xi_2,\ldots) \in \Omega$ by 
\begin{equation} \label{eq:shiftdef}
\theta \xi = (\xi_2,\xi_3,\ldots).
\end{equation}
	
To study the convergence of $\Phi_\nu^n g$ to $\dashint g$ we will make use of the following basic result from the ergodic theory of Markov processes. Recall that if $Q$ is a Markov transition kernel on a measurable space $(X,\Sigma)$, then a probability measure $\mu$ on $(X,\Sigma)$ is said to be \textit{invariant} for $Q$ if 
	\begin{equation} \label{eq:invariant}
	\mu(A) = \int_X Q(x,A)\,\mu(\dx)
	\end{equation}
	for every $A \in \Sigma$.
	
	\begin{lemma}[Doeblin] \label{thrm:Doeblin}
		Let $X$ be a Polish space and $\Sigma$ be the Borel $\sigma$-algebra on $X$. Let $Q$ be a Markov transition kernel on $(X,\Sigma)$ and define $Q^n$ as in \eqref{eq:Markov}. Suppose that there exists $\lambda \in (0,1)$ and a Borel probability measure $\mu$ on $(X,\Sigma)$ such that 
		\begin{equation} \label{eq:minorization} 
		Q(x,\cdot) \ge \lambda \mu(\cdot)
		\end{equation} 
		for every $x \in X$. Then, there exists a unique invariant measure $\mu_*$ for $Q$ such that for every bounded, Borel measurable function $f: X \to \R$ there holds 
		\begin{equation}
		\left\|\,Q^n f - \int_X f(x)\mu_*(\dx)\,\right\|_\infty \le 2(1-\lambda)^n\|f\|_\infty, 
		\end{equation}
		where $\|f\|_\infty = \sup_{x\in X}|f(x)|.$
	\end{lemma}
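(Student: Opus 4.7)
The plan is to exploit the minorization hypothesis \eqref{eq:minorization} to obtain a strict contraction of the operator $Q$ in the oscillation seminorm $\mathrm{osc}(f) := \sup_X f - \inf_X f$, and then to extract both the invariant measure $\mu_*$ and the quantitative convergence rate from this single contraction estimate.

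First I would write the Doeblin decomposition
$$Q(x, \cdot) = \lambda \mu(\cdot) + (1-\lambda) R(x, \cdot),$$
where $R(x, \cdot) := (1-\lambda)^{-1}\bigl(Q(x, \cdot) - \lambda \mu(\cdot)\bigr)$ is a Markov kernel (nonnegativity of $R(x, \cdot)$ and measurability of $x \mapsto R(x, A)$ follow directly from \eqref{eq:minorization} and the kernel properties of $Q$). For any bounded measurable $f: X \to \R$ and $x, y \in X$ the $\lambda \mu$ contributions cancel, giving
$$Qf(x) - Qf(y) = (1-\lambda)\bigl(Rf(x) - Rf(y)\bigr),$$
and since each $R(z, \cdot)$ is a probability measure, $|Rf(x) - Rf(y)| \le \mathrm{osc}(f)$. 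Taking the supremum over $x, y$ yields the one-step contraction $\mathrm{osc}(Qf) \le (1-\lambda)\,\mathrm{osc}(f)$, which iterates to
$$\mathrm{osc}(Q^n f) \le (1-\lambda)^n\,\mathrm{osc}(f) \le 2(1-\lambda)^n \|f\|_\infty.$$

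Next I would use this contraction to produce $\mu_*$ via total variation Cauchyness of the iterated kernels. Since $Q$ preserves order and fixes constants, the sequences $a_n(f) := \inf_X Q^n f$ and $b_n(f) := \sup_X Q^n f$ are respectively nondecreasing and nonincreasing with $b_n(f) - a_n(f) = \mathrm{osc}(Q^n f) \to 0$, so $Q^n f$ converges uniformly to a constant $c_f$. Applying this to all $f$ with $\|f\|_\infty \le 1$, and using the nesting $Q^m f(x) \in [a_n(f), b_n(f)]$ for all $m \ge n$, gives
$$\|Q^n(x, \cdot) - Q^m(x, \cdot)\|_{\mathrm{TV}} \le 2(1-\lambda)^n$$
uniformly in $x$. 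By completeness of the total variation norm on bounded signed measures and the TV-closedness of the probability simplex, $Q^n(x, \cdot)$ converges in total variation to a probability measure $\mu_*$, which by the uniformity in $x$ is independent of the starting point.

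Finally I would verify invariance, uniqueness, and the quantitative bound. Invariance follows from TV convergence applied to the Chapman--Kolmogorov identity $Q^{n+1}(x, \cdot) = \int_X Q(y, \cdot)\,Q^n(x, \dee y)$, which in the limit becomes $\mu_* = \int_X Q(y, \cdot)\,\mu_*(\dee y)$; uniqueness comes from the fact that any invariant probability $\mu'$ satisfies $\int f\,\mu'(\dee x) = \int Q^n f\,\mu'(\dee x) \to c_f$, which pins down $\mu'$. For the stated estimate, invariance gives $\int f\,\mu_*(\dee x) = \int Q^n f\,\mu_*(\dee x) \in [a_n(f), b_n(f)]$, so
$$\left\|\,Q^n f - \int_X f(x)\,\mu_*(\dee x)\,\right\|_\infty \le b_n(f) - a_n(f) \le (1-\lambda)^n\,\mathrm{osc}(f) \le 2(1-\lambda)^n \|f\|_\infty,$$
as required. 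Since this is a classical form of Doeblin's theorem, I do not anticipate any serious obstacle; the only mildly delicate step is identifying the TV-limit of the iterated kernels with a countably additive probability measure, which follows cleanly from completeness of the bounded signed measures under $\|\cdot\|_{\mathrm{TV}}$. The Polish hypothesis on $X$ plays no essential role in the argument and serves only to fix the measurable framework in which the kernel and its iterates live.
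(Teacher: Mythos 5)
The paper states Lemma~\ref{thrm:Doeblin} as a classical fact from the ergodic theory of Markov chains and does not include a proof of it, so there is nothing internal to compare against. Your argument is a correct and standard derivation via the Doeblin decomposition $Q(x,\cdot)=\lambda\mu+(1-\lambda)R(x,\cdot)$ and contraction of the oscillation seminorm: the kernel $R$ is well defined, $\mathrm{osc}(Q^nf)\le(1-\lambda)^n\,\mathrm{osc}(f)$ holds, the monotone nesting $a_n(f)\le Q^mf(x)\le b_n(f)$ for $m\ge n$ gives uniform-in-$x$ TV-Cauchyness, completeness of the bounded signed measures under $\|\cdot\|_{\mathrm{TV}}$ yields the countably additive limit $\mu_*$ (independent of the starting point because $c_f$ is $x$-independent), invariance follows from Chapman--Kolmogorov in the TV limit, uniqueness from $\int f\,d\mu' = \int Q^nf\,d\mu'\to c_f$, and the final bound $|Q^nf(x)-\int f\,d\mu_*|\le b_n(f)-a_n(f)\le 2(1-\lambda)^n\|f\|_\infty$ is exactly as claimed. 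Your remark that the Polish hypothesis is not needed for this argument is also correct.
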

	
Since Lebesgue measure is invariant for $\Phi_\nu$, it follows from Lemma~\ref{thrm:Doeblin} that we can obtain an optimal convergence result for the pulsed diffusion by showing that $Q = \Phi_\nu^{n_0}$ satisfies \eqref{eq:minorization} for some $n_0 \approx |\log(\nu)|$ and $\lambda \in (0,1)$ that does not depend on $\nu$.
	
	\subsection{Convergence results for the pulsed diffusion} \label{sec:prelimpulsed}
	
In this section, we use Lemma~\ref{thrm:Doeblin} as described above to prove an optimal convergence estimate for the pulsed diffusion. The main result here is stated as follows. 

	\begin{theorem} \label{thrm:pulsed}
	For all $\alpha$ sufficiently large there exist constants $C, c > 0$ such that for any $\nu \in (0,1/2]$, $p \in \{2,\infty\}$, and mean-zero $f \in L^p(\T^2)$ there holds 
		\begin{equation} \label{eq:pulsed}
		\|\Phi_\nu^n f\|_{L^p} \le C e^{-c n|\log \nu|^{-1}}\|f\|_{L^p}.
		\end{equation}
	\end{theorem}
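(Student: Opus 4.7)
The plan is to apply Doeblin's theorem (Lemma~\ref{thrm:Doeblin}) to the iterated kernel $\Phi_\nu^{n_0}$ for some $n_0 \sim |\log\nu|$, verifying the minorization hypothesis by combining one step of heat kernel smoothing at each end of $\Phi_\nu^{n_0}$ with the geometric mixing of Lemma~\ref{lem:weakmix} in the middle. The $L^\infty$ bound then follows directly from Doeblin, while the $L^2$ bound is extracted from the standard splitting of a Markov operator admitting a uniform minorization.

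The choice of scales is the first task. I fix $N = N(\nu, \alpha)$ with $2^{-N} \asymp \sqrt{\nu}/\alpha$ and set $n_0 = 2 + C_0 N \sim |\log\nu|$, where $C_0$ is the constant from Lemma~\ref{lem:weakmix}. This is dictated by two requirements: (i) that every ball $B_{2\sqrt{\nu}}(T(z))$ contains a tile $R(z) \in \mathcal{R}_N$, and (ii) that each connected component of the preimage $T^{-1}(B_{2\sqrt{\nu}}(w))$ contains a tile $Q(w) \in \mathcal{R}_N$. The second is the delicate requirement: by the piecewise linear hyperbolic structure recalled in Section~\ref{sec:UniformHyperbolicity}, $T^{-1}(B_{2\sqrt{\nu}}(w))$ is a bounded union of parallelograms of area $\asymp \nu$ with narrow direction of length $\asymp \sqrt{\nu}/\alpha$ and long direction of length $\asymp \alpha\sqrt{\nu}$, and the choice $2^{-N} \asymp \sqrt{\nu}/\alpha$ ensures such tiles fit inside.

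To establish the minorization, I write $\Phi_\nu^{n_0} = \Phi_\nu \circ \Phi_\nu^{C_0 N} \circ \Phi_\nu$ and express the transition density against Lebesgue as
\begin{equation*}
p_{n_0}(z,w) = \iint K_\nu(w_1 - T(z))\, p_{C_0 N}(w_1, w_2)\, K_\nu(w - T(w_2))\, dw_1\, dw_2,
\end{equation*}
with $K_\nu$ the periodized heat kernel, satisfying $K_\nu(v) \ge c_1/\nu$ on $|v| \le 2\sqrt{\nu}$. Combined with the inclusions $R(z) \subset B_{2\sqrt{\nu}}(T(z))$ and $Q(w) \subset T^{-1}(B_{2\sqrt{\nu}}(w))$, this yields
\begin{equation*}
p_{n_0}(z,w) \ge c_1^2\, \nu^{-2} \int_{R(z)} \Phi_\nu^{C_0 N}(w_1, Q(w))\, dw_1.
\end{equation*}
Using Fubini and the measure-preservation of each $T^{n}_{\nu,\xi}$,
\begin{equation*}
\int_{R(z)} \Phi_\nu^{C_0 N}(w_1, Q(w))\, dw_1 = \mathbb{E}_\xi\bigl[m\bigl(T^{C_0 N}_{\nu,\xi}(R(z)) \cap Q(w)\bigr)\bigr] \ge \lambda\, m(R(z))\, m(Q(w)),
\end{equation*}
where the lower bound is Lemma~\ref{lem:weakmix} applied pointwise in $\xi \in \Omega$. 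Since $m(R(z))$ and $m(Q(w))$ are each $\gtrsim \nu/\alpha^2$, the factor $\nu^{-2}$ cancels and I obtain $p_{n_0}(z, w) \ge \eta(\alpha) > 0$ uniformly in $z, w \in \T^2$. In measure form, $\Phi_\nu^{n_0}(z, \cdot) \ge \eta\, m(\cdot)$, which is the Doeblin minorization against Lebesgue measure (invariant for $\Phi_\nu$).

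Lemma~\ref{thrm:Doeblin} now gives $\|\Phi_\nu^{kn_0} f\|_{L^\infty} \le 2(1-\eta)^k \|f\|_{L^\infty}$ for mean-zero $f$. For the $L^2$ bound, decompose $\Phi_\nu^{n_0}(z, \cdot) = \eta\, m + (1-\eta)\tilde Q(z, \cdot)$, where $\tilde Q$ is a Markov kernel which, by invariance of $m$ under both $\Phi_\nu^{n_0}$ and the projection $m$, also preserves $m$; Jensen's inequality then gives $\|\tilde Q f\|_{L^2} \le \|f\|_{L^2}$, and for mean-zero $f$ one has $\Phi_\nu^{n_0} f = (1-\eta)\tilde Q f$, so $\|\Phi_\nu^{n_0}\|_{L^2_0 \to L^2_0} \le 1-\eta$. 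Distributing any remainder steps via the trivial contraction $\|\Phi_\nu\|_{L^p \to L^p} \le 1$ produces the announced estimate for both $p \in \{2, \infty\}$ with constants depending only on $\alpha$. The hardest part is the scale-matching in the choice of $N$: the preimages of small balls under $T$ have aspect ratio $\sim \alpha^2$, which forces the use of two-sided smoothing around the mixing window and dictates the $\alpha$-dependence of the Doeblin constant $\eta$.
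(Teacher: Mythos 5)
Your overall strategy---establishing a Doeblin minorization for $\Phi_\nu^{n_0}$ against Lebesgue measure with $n_0 \sim |\log\nu|$, then iterating for $L^\infty$ and bootstrapping to $L^2$---is exactly the paper's strategy (Lemmas~\ref{lem:densitybound} and~\ref{thrm:Doeblin}), and your $L^2$ bootstrap via the kernel decomposition $\Phi_\nu^{n_0} = \eta\, m + (1-\eta)\tilde Q$ is a clean alternative to the paper's use of an argument from \cite{IyerZhou22}. The difference, and where the gap lies, is in how you obtain the pointwise density lower bound.

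You sandwich Lemma~\ref{lem:weakmix} between two heat kernels and reduce to fitting a tile $R(z) \in \mathcal{R}_N$ inside the heat-kernel core near $T(z)$ and a tile $Q(w)$ inside the heat-kernel core near $T^{-1}(w)$, with $2^{-N}\asymp \sqrt\nu/\alpha$. There are two problems. First, the scale is off by a factor of $\alpha$: each of $T_1$ and $T_2$ is a shear of parameter $\alpha$, so the composition $T = T_2\circ T_1$ has singular values $\asymp\alpha^2$ and $\asymp\alpha^{-2}$ (for instance $A_1$ has $\mathrm{tr}(A_1 A_1^T) = \alpha^4 + 4\alpha^2 + 2$), so the narrow direction of $T(B_{2\sqrt\nu}(z))$ (or of $T^{-1}(B_{2\sqrt\nu}(w))$) is $\asymp\sqrt\nu/\alpha^2$, not $\sqrt\nu/\alpha$; to fit a tile you need $2^{-N}\lesssim\sqrt\nu/\alpha^2$. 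Second, and more seriously, the statement ``$T^{-1}(B_{2\sqrt\nu}(w))$ is a bounded union of parallelograms of area $\asymp\nu$'' fails near the singularity set: the singularity curves can cut $B_{2\sqrt\nu}(w)$ into pieces of arbitrarily small area and into thin wedges, and you would need an additional argument (using the bounded number of singularity lines through any point and the $\gtrsim 1/\alpha$ angular separation between them) to ensure that \emph{some} component still accommodates a tile. None of this is present in the sketch. There is also a sign/direction slip in the density formula: the one-step density against Lebesgue is $p(z,w) = K_\nu\bigl(z - T^{-1}(w)\bigr)$ (the paper writes this as $G(z_0 - T^{-1}(z))$), so the two heat-kernel factors should read $K_\nu\bigl(z - T^{-1}(w_1)\bigr)$ and $K_\nu\bigl(w_2 - T^{-1}(w)\bigr)$, which shifts which of the two tile-fitting requirements is the delicate one.

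The paper sidesteps the geometric fitting entirely. Rather than locating a tile inside a distorted ball, Lemma~\ref{lem:densitybound} replaces the one-step density $p(z_0,\cdot)$ by a function $\bar p(z_0,\cdot)$ that is piecewise constant on an extremely fine grid $\mathcal{R}_M$ with $2^{-M}\le\nu^2$, with error controlled by the heat-kernel gradient bound $\|\nabla p(z_0,\cdot)\|_{L^\infty}\lesssim_\alpha\nu^{-3/2}$, so that $\|\bar p - p\|_{L^\infty}\lesssim_\alpha\sqrt\nu$. Lemma~\ref{lem:weakmix} is then applied to every tile $Q_j\in\mathcal{R}_M$ simultaneously (not to a single tile selected geometrically), yielding $\Phi_\nu^{n-1}(z_0,R)\ge(\lambda/2)\,m(R)$ for all $R\in\mathcal{R}_N$ with $2^{-N}\le\sqrt\nu$; a single heat smoothing at the final step then produces the pointwise bound $p_n(z_0,z)\ge\delta_1$. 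This requires no analysis of the images or preimages of balls under $T$ and no special treatment of singularities. Your two-sided smoothing idea can probably be salvaged with a careful wedge analysis at singular points and the corrected scale $\sqrt\nu/\alpha^2$, but as written the geometric step does not go through.
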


The key step in the proof of Theorem~\ref{thrm:pulsed} is to use Lemma~\ref{lem:weakmix} to obtain a uniform-in-$\nu$ lower bound on the density of $T_{\nu,\xi}^n(z_0)$ with respect to Lebesgue measure. For $n \ge 2$ we denote this density by $p_{n}(z_0,\cdot)$ and for $n = 1$ we write $p(z_0,\cdot)$. 

\begin{lemma} \label{lem:densitybound}
	Let $\alpha_0$, $C_0$, and $\lambda$ be as in the statement of Lemma~\ref{lem:weakmix}. There exist constants $\delta_1, C_1 > 0$ that do not depend on $\nu$ so that for all $\alpha \ge \alpha_0$, $z, z_0 \in \T^2$, and $n \ge C_1|\log\nu|$ we have
	\begin{equation} \label{eq:densitybound}
	 p_n(z_0,z) \ge \delta_1.
	\end{equation}
\end{lemma}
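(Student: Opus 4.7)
The plan is to prove \eqref{eq:densitybound} in three ingredients: (1) a one-step Gaussian bound producing $p(z_0,\cdot) \gtrsim \nu^{-1}$ on a dyadic square $R_0 \in \mathcal{R}_{N'}$ at scale $2^{-N'} \approx \alpha^{-1}\sqrt{\nu}$ near $T(z_0)$; (2) a mass-propagation step in which Lemma~\ref{lem:weakmix}, Lebesgue-invariance of the kicked maps, and Chapman--Kolmogorov combine to give the averaged lower bound $\int_Q p_n(z_0,z)\,dz \gtrsim m(Q)$ for every $Q \in \mathcal{R}_{N'}$ as soon as $n \gtrsim |\log\nu|$; and (3) a final Gaussian-smoothing step that upgrades the averaged bound into the pointwise density bound of \eqref{eq:densitybound}.

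For step (1), since each $T_{\nu,\xi}$ preserves Lebesgue measure, a change of variables gives
\begin{equation*}
p(z_0,z) = g_\nu\bigl(T^{-1}(z) - z_0\bigr),
\end{equation*}
where $g_\nu$ is the periodization of the $\R^2$ Gaussian density with covariance $2\nu I$. Since $g_\nu$ is bounded below by a multiple of $\nu^{-1}$ on the ball of radius $\sqrt{\nu}$ about the origin, one has $p(z_0,\cdot) \gtrsim \nu^{-1}$ on $T(B_{\sqrt{\nu}}(z_0))$---or on the largest connected piece of this image when $B_{\sqrt{\nu}}(z_0)$ is cut by the singularity lines $x=1/2$ or $y=1/2$ of $T$. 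Recalling that $\nabla T$ equals one of the matrices $A_i$ in \eqref{eq:GradiantAs}, which has singular values comparable to $\alpha,\alpha^{-1}$ with eigendirections close to $e_1,e_2$ by the cone analysis of Section~\ref{sec:UniformHyperbolicity}, this image is roughly an axis-aligned parallelogram of dimensions $\alpha\sqrt{\nu} \times \alpha^{-1}\sqrt{\nu}$ and therefore contains a dyadic square $R_0 \in \mathcal{R}_{N'}$ once $2^{-N'}$ is taken to be a suitably small $\alpha$-dependent multiple of $\alpha^{-1}\sqrt{\nu}$.

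For step (2), fix any $Q \in \mathcal{R}_{N'}$ and any $n$ with $n-1 \ge C_0 N'$. Fubini, the Lebesgue-invariance of each $T_{\nu,\xi'}$, and Lemma~\ref{lem:weakmix} combine to give
\begin{equation*}
\int_{R_0}\int_Q p_{n-1}(z_1,z)\,dz\,dz_1 = \int_\Omega m\bigl(T^{n-1}_{\nu,\xi'}(R_0) \cap Q\bigr)\,\P(d\xi') \ge \lambda\, m(R_0)\, m(Q),
\end{equation*}
and Chapman--Kolmogorov then yields
\begin{equation*}
\int_Q p_n(z_0,z)\,dz \ge \int_{R_0} p(z_0,z_1)\int_Q p_{n-1}(z_1,z)\,dz\,dz_1 \gtrsim \nu^{-1}\, m(R_0)\, m(Q) \gtrsim m(Q),
\end{equation*}
with implicit constants depending only on $\alpha$.

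For step (3), conditioning on the last step in Chapman--Kolmogorov and using the one-step density formula from step (1) gives
\begin{equation*}
p_n(z_0,z) = \int p_{n-1}(z_0,w)\,g_\nu\bigl(T^{-1}(z)-w\bigr)\,dw \gtrsim \nu^{-1}\int_{B_{\sqrt{\nu}}(T^{-1}(z))} p_{n-1}(z_0,w)\,dw.
\end{equation*}
Since the ball $B_{\sqrt{\nu}}(T^{-1}(z))$ contains some $Q \in \mathcal{R}_{N'}$ of measure comparable to $\nu/\alpha^2$, applying step (2) with $n-1$ in place of $n$---which together with step (2)'s hypothesis forces $n \ge C_1|\log\nu|$ for a suitable $C_1$---gives $\int_{B_{\sqrt{\nu}}(T^{-1}(z))} p_{n-1}(z_0,w)\,dw \gtrsim \nu$, and hence $p_n(z_0,z) \gtrsim 1$ uniformly in $z_0,z$. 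I expect the main technical point to lie in step (1), where care is needed to verify that the image $T(B_{\sqrt{\nu}}(z_0))$---or the largest piece of it when $z_0$ lies near a singularity---truly does contain a dyadic square at the correct scale $\alpha^{-1}\sqrt{\nu}$ rather than $\sqrt{\nu}$; steps (2) and (3) follow fairly directly from Lemma~\ref{lem:weakmix} combined with measure preservation and two applications of Chapman--Kolmogorov.
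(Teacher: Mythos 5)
Your strategy is a genuinely different route from the paper's and, as you yourself flag, the crux is step~(1); I do not think the gap there is as small as you suggest. Two concrete issues. First, the singularity set of $T$ is not just $\{x=1/2\}\cup\{y=1/2\}$: since $T=T_2\circ T_1$, one has $\mathcal{S}^+=\{x=0\}\cup\{x=1/2\}\cup T_1^{-1}(\{y=0\}\cup\{y=1/2\})$, which contains on the order of $\alpha$ additional piecewise-linear curves with perpendicular spacing roughly $\alpha^{-1}$ (see Figure~\ref{fig:Sing}). So the ball $B_{\sqrt{\nu}}(z_0)$ can be cut by several of these, not just one or two axis lines, and you would have to argue carefully that the largest remaining piece still contains a disk of radius $\gtrsim\sqrt{\nu}$ (and then that its image contains an axis-aligned \emph{dyadic} cell). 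Second, the singular values of $\nabla T=A_i$ are on the order of $\alpha^{2}$ and $\alpha^{-2}$, not $\alpha$ and $\alpha^{-1}$ (the eigenvalues satisfy $|c_\alpha|\ge\alpha^2/4$ per Section~\ref{sec:hyperbolic}), so your target dyadic scale should be $\sim\alpha^{-2}\sqrt{\nu}$, not $\alpha^{-1}\sqrt{\nu}$. Neither issue is necessarily fatal — the constants only degrade in $\alpha$, which is fixed — but they are precisely the case analysis you have deferred, and it is real work.

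The paper sidesteps step~(1) entirely and this is worth internalizing. Rather than locating a single dyadic cell $R_0$ where $p(z_0,\cdot)\gtrsim\nu^{-1}$, it approximates $p(z_0,\cdot)$ by its piecewise-constant average $\bar p$ on the much finer grid $\mathcal{R}_M$ with $2^{-M}\le\nu^2$, controls $\|p-\bar p\|_{L^\infty}\lesssim\sqrt{\nu}$ using the Lipschitz bound $\|\nabla p(z_0,\cdot)\|_{L^\infty}\lesssim_\alpha\nu^{-3/2}$, and then applies Lemma~\ref{lem:weakmix} to \emph{every} cell $Q_j\in\mathcal{R}_M$, summing the resulting lower bounds and using $\sum_j c_j m(Q_j)=\int p=1$. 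This gives the averaged bound $\Phi_\nu^{n-1}(z_0,Q)\ge(\lambda/2)m(Q)$ for $Q\in\mathcal{R}_N$ ($2^{-N}\le\sqrt{\nu}$) with no geometry of $T(B_{\sqrt{\nu}}(z_0))$ involved. Your steps~(2) and~(3) — Chapman--Kolmogorov together with Lemma~\ref{lem:weakmix} for the mass-propagation estimate, then a final convolution with the heat kernel to upgrade an averaged bound to a pointwise one — are sound and closely parallel what the paper does in its final paragraph. If you want to keep your step~(1), you must carry out the cut-and-inscribe argument against the full singularity structure; otherwise, replacing it with the paper's averaging trick gives a cleaner proof.
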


\begin{proof}
It suffices to consider the case where $\nu \ll 1$. Let $N$ be the first natural number such that $2^{-N} \le \sqrt{\nu}$ and $M \ge N$ be the first natural number such that $2^{-M} \le \nu^2$. Suppose that $n \ge C_0 M+2$ and note that this is satisfied provided $n \ge C |\log \nu|$ for some constant $C$ that does not depend on $\nu$. We first claim that for $\nu$ sufficiently small we have  
	\begin{equation} \label{eq:densityrectangle}
	 \inf_{R \in \mathcal{R}_N} \inf_{z_0 \in \T^2} \frac{\Phi_\nu^{n-1}(z_0,R)}{m(R)} \ge \frac{\lambda}{2},
	\end{equation}
 where $\lambda$ is as in Lemma~\ref{lem:weakmix}. By the semigroup property followed by Fubini's theorem, for any $Q \in \mathcal{R}_M$ we have 
	\begin{equation} \label{eq:fundbounds1}
	\Phi_\nu^{n-1}(z_0,Q) = \int_{\T^2} \Phi_\nu^{n-2}(z,Q) \,\Phi_\nu(z_0,\dz) = \int_{\Omega}\left(\int_{\T^2} \chi_{Q}(T_{\nu,\xi}^{n-2}(z))\,p(z_0,z)\,\dz \right) \P(\dee \xi),
	\end{equation}
 where $\chi_E$ denotes the characteristic function of a measurable set $E \subseteq \T^2$. Let $\bar{p}(z_0,\cdot)$ be the function that is constant on each $Q_j \in \mathcal{R}_M$ and such that 
	\begin{equation} \label{eq:fundbounds2}
	 \dashint_{Q_j} \bar{p}(z_0,z)\,\dz = \dashint_{Q_j}p(z_0,z)\,\dz:= c_j.
	 \end{equation}
	Since $p(z_0,z) = G(z_0 - T^{-1}(z))$, where $G$ is the fundamental solution to the heat equation on $\T^2$, we have $\|\grad p(z_0,\cdot)\|_{L^\infty} \le C_\alpha \nu^{-3/2}$ for some constant $C_\alpha$ depending on $\alpha$. It follows then that $\|\bar{p}(z_0,\cdot) - p(z_0,\cdot)\|_{L^\infty} \le C_\alpha\sqrt{2 \nu}.$ Now, by Lemma~\ref{lem:weakmix}, for every $\xi \in \Omega$ we have
	\begin{align}
	\int_{\T^2} \chi_{Q}(T_{\nu,\xi}^{n-2}(z))\,\bar{p}(z_0,z)\,\dz & = \sum_{Q_j \in \mathcal{R}_M} c_j \int_{\T^2} \chi_Q(T_{\nu,\xi}^{n-2}(z))\,\chi_{Q_j}(z)\,\dz \\ 
	& \ge \lambda\sum_{Q_j \in \mathcal{R}_M} c_j m(Q_j) \,m(Q) \\ 
	& = \lambda m(Q).
	\end{align}
	Putting this estimate into \eqref{eq:fundbounds1} and writing $p(z_0,z) = p(z_0,z) - \bar{p}(z_0,z) + \bar{p}(z_0,z)$ we obtain
 \begin{equation}
     \Phi_\nu^{n-1}(z_0,Q) \ge \lambda m(Q) - C_\alpha \sqrt{2\nu} \int_\Omega \left( \int_{\T^2} \chi_Q(T^{n-2}_{\nu,\xi}(z)) \dz\right) \P(\dee \xi) = (\lambda - C_\alpha \sqrt{2\nu})m(Q).
 \end{equation}
 Thus, for $\nu$ sufficiently small we have 
 \begin{equation} \label{eq:densityrectangle2}
 \inf_{Q \in \mathcal{R}_M} \inf_{z_0 \in \T^2} \frac{\Phi_\nu^{n-1}(z_0,Q)}{m(Q)} \ge \frac{\lambda}{2}. 
 \end{equation}
 The bound \eqref{eq:densityrectangle} then follows by writing $$\Phi_\nu^{n-1}(z_0,R) = \sum_{Q \in \mathcal{R}_M, Q \subseteq R} \Phi_\nu^{n-1}(z_0,Q)$$ and using \eqref{eq:densityrectangle2}.

We now use \eqref{eq:densityrectangle} to conclude the lower bound claimed in the lemma for $p_n(z_0,z)$. For ease of notation we write $h(z) = p_{n-1}(z_0,z)$. Note that $p_{n}(z_0,z) = (e^{\nu \Delta }h)(T^{-1}(z))$. Since composition with $T^{-1}$ preserves global lower bounds, it suffices to show that exists $\delta_1 > 0$ that does not depend on $\nu$ or $z_0$ such that 
	\begin{equation} \label{eq:fundbounds3}
	 (e^{\nu \Delta}h)(z) \ge \delta_1
	\end{equation}
	for all $z \in \T^2$. Let $\bar{h}$ denote the periodic extension of $h$ to $\R^2$ and define 
 $$P = [0,2^{-N+1}] \times [0,2^{-N+1}] \subseteq \R^2.$$
Observe that \eqref{eq:densityrectangle} implies 
	\begin{equation} \label{eq:fundbounds4}
	\int_{P} \bar{h}(z-z')\dee z' \ge \lambda 2^{-2N-1} \ge \frac{\lambda \nu}{8}
	\end{equation}
 for any $z \in \T^2$. Since $|z'|^2 \le 8 \nu$ for $z' \in P$ we thus have 
	\begin{align}
	(e^{\nu \Delta}h)(z) & = \frac{1}{4\pi \nu}\int_{\R^2} \exp\left(\frac{-|z'|^2}{4\nu}\right) \bar{h}(z-z')\dee z' \ge \frac{1}{4\pi \nu} \int_{P} \exp\left(\frac{-|z'|^2}{4\nu}\right) \bar{h}(z-z')\dee z' \ge \frac{\lambda}{32 \pi e^2}.
	\end{align}
\end{proof}

With Lemma~\ref{lem:densitybound} at hand we are now ready to prove Theorem~\ref{thrm:pulsed}. We will use Lemma~\ref{thrm:Doeblin} and the lower bound in \eqref{eq:densitybound} to deduce convergence in $L^\infty$ and then employ a lemma from \cite{IyerZhou22} to extend to the $L^2$ case.
	
	\begin{proof}[Proof of Theorem~\ref{thrm:pulsed}]
	Let $\delta_1, C_1 > 0$ be as in the statement of Lemma~\ref{lem:densitybound} and let $n_0$ be the first natural number greater than or equal to $C_1 |\log\nu|$. The lower bound in \eqref{eq:densitybound} implies that 
 \begin{equation}
\Phi_\nu^{n_0}(z,\cdot) \ge \delta_1 m(\cdot)
\end{equation}
for every $z \in \T^2$. Therefore, by Lemma~\ref{thrm:Doeblin} there exists a constant $\delta_2 > 0$ depending only $\delta_1$ such that for every $k \in \N$ and bounded, Borel measurable function $g$ with mean zero we have 
\begin{equation} \label{eq:Doeblin1}
\|\Phi_\nu^{k n_0} g\|_{\infty} \le 2 e^{-\delta_2 k} \|g\|_{\infty}.
\end{equation}
In other words, for every $n \in n_0 \N$ we have
\begin{equation} \label{eq:inttimes}
    \|\Phi_\nu^{n}g\|_{L^\infty} \le 2\exp\left(-\frac{\delta_2 n}{n_0}\right)\|g\|_{L^\infty} \le 2 \exp\left(-\frac{\delta_2 n}{(C_1 + 2)|\log \nu|}\right)\|g\|_{L^\infty}.
\end{equation}
This gives the desired estimate for $n \in n_0 \N$. The estimate at intermediate times follows in a standard way from \eqref{eq:inttimes} and the fact that $\Phi_\nu^m$ is bounded on $L^\infty$ uniformly in $m \in \N$.

To see how \eqref{eq:Doeblin1} implies the $L^2$ estimate, we follow
the proof of \cite[Lemma 4.1]{IyerZhou22}, whose argument we sketch for
completeness. For mean-zero $f \in L^2$, we have $\Phi_\nu^{n} f(z) = \int p_{n}(z,z')f(z') \dee z' =   \int [p_{n}(z,z')-1]f(z')
\dee z'$. Therefore, $ |\Phi_\nu^{n} f(z) |\leq\int\sqrt{
                   |p_{n}(z,z')-1|[p_{n}(z,z')+1]}|f(z')|\dee z'$ and hence the  Cauchy-Schwarz inequality implies that
\begin{align*}
  |\Phi_\nu^{n} f(z) |^2 \leq&\left(\int
                   |p_{n}(z,z')-1| \dee z' \right)\left(\int [p_{n}(z,z')+1]|f(z')|^2\dee z'\right)\,.
\end{align*}
Since Lebesgue measure is invariant for $\Phi_\nu^{n}$, we have
that $\int p_{n}(z,z')dz =1$ for all $z'$, and so $\int\int
[p_{n}(z,z')+1] |f(z')|^2\,\dee z\,\dee z'= 2\|f\|_{L^2}^2$. Combining these
last estimates gives
\begin{align*}
   \|\Phi_\nu^{n} f\|_{L^2}^2 \leq   2\|f\|_{L^2}^2 \sup_{z}\int
                   |p_{n}(z,z')-1| \dee z'. 
\end{align*}
Since by  \eqref{eq:Doeblin1} we have $\int
                   |p_{k n_0}(z,z')-1| dz' \leq   4 e^{-\delta_2 k} $, we
                   obtain
\begin{equation} \label{eq:Doeblin2}
    \|\Phi_\nu^{k n_0} f\|_{L^2} \le 2\sqrt{2} e^{-\frac12\delta_2 k}\|f\|_{L^2}.
  \end{equation}
The proof of the $L^2$ estimate is then completed from \eqref{eq:Doeblin2} in the same way as the $L^\infty$ estimate was from \eqref{eq:Doeblin1}.
\end{proof}
	
\subsection{Continuous-time enhanced dissipation} \label{sec:prelimcont}

In this section we prove that $u_\alpha$ is dissipation enhancing with the optimal rate function $\delta(\nu) = C|\log \nu|^{-1}$ by approximating the continuous-time diffusion with the pulsed diffusion and applying Theorem~\ref{thrm:pulsed}. The approximation relies only on the fact that the velocity field is Lipschitz and is thus quite general. We will therefore formulate it as an abstract result, as it may be of independent interest. 
	
	We begin by describing the general setting that we consider. Let $v :[0,1] \times \T^2 \to \R^2$ be a Lipschitz continuous (in space), divergence free velocity field satisfying 
 \begin{equation}
     \sup_{t \in [0,1]}\|v(t)\|_{\mathrm{Lip}} < \infty.
 \end{equation}
 We assume moreover that there is a partition $0 = t_0 < t_1 < \ldots < t_m = 1$ of $[0,1]$ so that $v \in C([t_{i-1},t_i)\times \T^2)$ for each $i = 1,\ldots, m$.  Let $\psi_t$ denote the flow map associated with $v$ and for $\nu>0$ let $\Psi_\nu$ be the operator given by 
	\begin{equation}
	\Psi_\nu f = e^{\nu \Delta}(f\circ \psi_1^{-1}).
	\end{equation}
The iterates of $\Psi_\nu$ define the pulsed diffusion generated by
$v$. 
	
	It is clear that $v=u_\alpha$ satisfies the assumptions above. Our general statement in the present setting is that the $L^2$ decay timescale of the advection-diffusion equation defined by the periodic-in-time extension of $v$ is at least as fast as the $L^2$ decay timescale of the pulsed diffusion $\Psi_\nu$. 
	
	\begin{lemma} \label{lem:approximation}
		Let $v$ and $\Psi_\nu$ be as above and for $f_0 \in L^2$ let $f$ denote the solution of \eqref{eq:advectiondiffusion} with $u$ the periodic-in-time extension of $v:[0,1)\times \T^2 \to \R^2$. Suppose that there exists $n_0 \in \N$ such that for every mean-zero $g \in L^2$ there holds 
		\begin{equation} \label{eq:discretegeneral} 
		\|\Psi^{n_0}_\nu g\|_{L^2} \le \frac{1}{2}\|g\|_{L^2}.
		\end{equation}
		Then, there exists $\delta \in (0,1)$ depending only on $ \sup_{0\le t \le 1}\|v(t)\|_{\mathrm{Lip}}$ such that for every mean-zero $f_0 \in L^2$ we have 
		\begin{equation} \label{eq:approxgoal}
		\|f(n_0+1)\|_{L^2} \le (1-\delta)\|f_0\|_{L^2}.
		\end{equation}
	\end{lemma}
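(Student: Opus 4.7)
I will argue by contradiction, assuming $\|f(n_0+1)\|_{L^2} > (1-\delta)\|f_0\|_{L^2}$ for some $\delta \in (0,1)$ to be fixed at the end. The standard energy identity $\|f(t)\|_{L^2}^2 + 2\nu\int_0^t\|\nabla f\|_{L^2}^2\, ds = \|f_0\|_{L^2}^2$ together with the monotonicity of $t \mapsto \|f(t)\|_{L^2}$ then forces the smallness bound $\nu\int_0^{n_0+1}\|\nabla f\|_{L^2}^2\, ds \leq \delta\|f_0\|_{L^2}^2$. The plan is to parlay this into smallness of $\|f(n_0+1) - \Psi_\nu^{n_0}f(1)\|_{L^2}$, which combined with the hypothesis $\|\Psi_\nu^{n_0}f(1)\|_{L^2} \leq \tfrac{1}{2}\|f(1)\|_{L^2} \leq \tfrac{1}{2}\|f_0\|_{L^2}$ forces $\|f(n_0+1)\|_{L^2}$ to be close to $\tfrac{1}{2}\|f_0\|_{L^2}$, contradicting the assumption. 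The extra unit of time in the conclusion is exactly to start the comparison at $f(1)$ rather than at $f_0$, allowing us to convert the only-time-integrated gradient bound into usable pointwise data at integer times $k\geq 1$; this is crucial since $f_0$ is merely $L^2$.

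The core estimate is a per-step comparison of the form $\|f(k+1) - \Psi_\nu f(k)\|_{L^2} \leq C_L\sqrt{\nu}\,\|\nabla f(k)\|_{L^2}$, where $L = \sup_t\|v(t)\|_{\mathrm{Lip}}$. To derive it, introduce the pure transport $\tilde f_k(t) = f(k)\circ \psi_{k,t}^{-1}$ on $[k,k+1]$, so that $\Psi_\nu f(k) = e^{\nu\Delta}\tilde f_k(k+1)$. An $L^2$ energy estimate for $e := f - \tilde f_k$, using incompressibility of $u$ and the splitting $\Delta f = \Delta e + \Delta \tilde f_k$, yields $\tfrac{d}{dt}\|e\|_{L^2}^2 \leq \nu\|\nabla\tilde f_k\|_{L^2}^2$ after an AM--GM step; combining with the pure-transport Gronwall bound $\|\nabla\tilde f_k(t)\|_{L^2} \leq e^{L(t-k)}\|\nabla f(k)\|_{L^2}$ and the standard Fourier-side estimate $\|(e^{\nu\Delta}-I)g\|_{L^2} \leq \sqrt{\nu}\|\nabla g\|_{L^2}$ gives the per-step bound. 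The pointwise quantities $\|\nabla f(k)\|_{L^2}$ are then controlled by the time-integrated dissipation via the standard estimate $\tfrac{d}{dt}\|\nabla f\|_{L^2}^2 \leq 2L\|\nabla f\|_{L^2}^2$ (obtained by differentiating the equation and using incompressibility), which yields $\|\nabla f(k)\|_{L^2}^2 \leq e^{2L}\|\nabla f(s)\|_{L^2}^2$ for every $s\in[k-1,k]$. Averaging in $s$ and summing in $k$ then produces $\sum_{k=1}^{n_0}\|\nabla f(k)\|_{L^2}^2 \leq (e^{2L}\delta/\nu)\|f_0\|_{L^2}^2$.

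To conclude, telescope via
\[
f(n_0+1) - \Psi_\nu^{n_0}f(1) = \sum_{k=1}^{n_0}\Psi_\nu^{n_0-k}\bigl(f(k+1) - \Psi_\nu f(k)\bigr),
\]
use that $\Psi_\nu$ is an $L^2$ contraction, and combine with the preceding estimates. The main obstacle I foresee is that a naive Minkowski-then-Cauchy--Schwarz estimate yields an error of size $C_L\sqrt{n_0\delta}\,\|f_0\|_{L^2}$ on the left-hand side, which only produces a contradiction for $\delta \gtrsim n_0^{-1}$. Obtaining the claimed $\delta$ depending only on $L$---which is precisely what transfers the sharp $|\log\nu|^{-1}$ rate of Theorem~\ref{thrm:pulsed} to Theorem~\ref{thrm:enhanced}---requires exploiting the dissipativity of $\Psi_\nu$ itself on mean-zero functions inside the telescoped sum (for instance by tracking a stable recursion $r_k^2 \leq (1-c)\,r_{k-1}^2 + C\|e_k\|^2$ for the partial differences), in line with the paper's emphasis that here one is comparing two \emph{dissipative} problems rather than a dissipative and a conservative one.
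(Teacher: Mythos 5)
Your broad strategy is aligned with the paper's: argue by contradiction, extract a small dissipation budget $\nu\int\|\nabla f\|_{L^2}^2\,dt\lesssim\delta\|f_0\|_{L^2}^2$, compare $f$ to the pulsed diffusion of $f(1)$, and use the extra unit of time (just as the paper does via Chebyshev) to secure a pointwise gradient bound on $f(1)$. However, the final step---turning a per-step error bound into a bound on $\|f(n_0+1)-\Psi_\nu^{n_0}f(1)\|_{L^2}$ that is independent of $n_0$---has a genuine gap, and you correctly diagnose it yourself. The recursion $r_k^2\leq(1-c)r_{k-1}^2+C\|e_k\|^2$ that you propose is not available: $\Psi_\nu$ is a non-strict $L^2$ contraction on a single step, and the only quantitative contraction at hand is the hypothesis $\|\Psi_\nu^{n_0}g\|_{L^2}\leq\tfrac12\|g\|_{L^2}$, which applies only after $n_0$ steps. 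Without a per-step factor strictly less than one, the telescoped sum is bounded by $\sum_k\|e_k\|_{L^2}$, and since your per-step estimate $\|e_k\|_{L^2}\lesssim\sqrt{\nu}\,\|\nabla f(k)\|_{L^2}$ only yields an $\ell^2$ control $\sum_k\|e_k\|_{L^2}^2\lesssim\delta\|f_0\|_{L^2}^2$, Cauchy--Schwarz inescapably costs $\sqrt{n_0}$.

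The structural culprit is that your per-step estimate discards the gradient of the pulsed-diffusion side before integrating in time, retaining only $\|\nabla \tilde f_k\|_{L^2}$. The paper instead compares the whole trajectory $F_d(t)$ of the pulsed diffusion (suitably reparametrized) to the reparametrized $F(t)$ of the continuous diffusion, deriving
\begin{equation}
\frac{d}{dt}\|F_d-F\|_{L^2}^2\leq 2\nu\|\nabla F_d\|_{L^2}\|\nabla F\|_{L^2}
\end{equation}
and \emph{only then} applying Young's inequality with a free parameter $\epsilon$, producing the two separate terms $\epsilon\,\nu\int\|\nabla F_d\|_{L^2}^2$ and $\epsilon^{-1}\nu\int\|\nabla F\|_{L^2}^2$ in Lemma~\ref{lem:L2approx}. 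The crucial ingredient you are missing is then Lemma~\ref{lem:dissbound}, which bounds $\nu\int_0^\infty\|\nabla F_d\|_{L^2}^2\,dt\leq C_2\|f_0\|_{L^2}^2$ uniformly in $n_0$ by exploiting the dissipativity of the pulsed diffusion (it alternates pure transport, which conserves $L^2$, with pure heat flow, whose dissipation telescopes). Choosing $\epsilon=\delta$ then makes both contributions of size $\delta\|f_0\|_{L^2}^2$ with no factor of $n_0$, and the contradiction closes. So while your slogan---exploit the dissipativity of the pulsed diffusion---is exactly right, the mechanism is a global-in-time $H^1$ energy bound on $F_d$ fed through a Young's inequality free parameter, not a per-step contraction recursion.
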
	
	
	By Theorem~\ref{thrm:pulsed} applied with $p = 2$, $\Phi_{\nu}$ satisfies \eqref{eq:discretegeneral} for some $n_0 \approx |\log(\nu)|$. The desired enhanced dissipation estimate for $u_\alpha$ at integer multiples of the time $t_0 = n_0$ then follows by iterating Lemma~\ref{lem:approximation}. The bound at the intermediate times is obtained using the monotonicity of $t \mapsto \|f(t)\|_{L^2}$ for $L^2$ solutions of \eqref{eq:advectiondiffusion}. To complete the proof of Theorem~\ref{thrm:enhanced}, it only remains to prove Lemma~\ref{lem:approximation}. 
	
Throughout the remainder of this section, for a given mean-zero $f_0 \in L^2$ we write $f(t)$ for the solution of \eqref{eq:advectiondiffusion} with $u$ the periodic-in-time extension of $v:[0,1) \times \T^2 \to \R^2$. In order to approximate the (continuous-time) $f$ by the (discrete-time) pulsed diffusion, it will be convenient to introduce some auxiliary continuous-time functions. First, let $F:[0,\infty) \times \T^2 \to \R$ be defined on the time interval $[n,n+1)$ for each integer $n \ge 0$ by
\begin{equation}
    F(t) = \begin{cases}
        f(n/2+(t-n)) & n \text{ even}, \\ 
        f((n+1)/2) & n \text{ odd}.
    \end{cases}
\end{equation}
That is, $F$ evolves in the same was as $f$ but is chosen to be constant on every other unit time interval. Next, we define the natural continuous-time operator $S_\nu^t: L^2 \to L^2$ satisfying $S_\nu^{2n} = \Psi_\nu^n$ for every $n \in \N$. In particular, for $g \in L^2$ let 
	\begin{equation} 
	S_\nu^t g = 
	\begin{cases}
	(\Psi_\nu^{\lfloor t/2 \rfloor} g)\circ \psi_{t-2\lfloor t/2 \rfloor}^{-1}& 0 \le t - 2\lfloor t/2 \rfloor \le 1, \\ 
	e^{\nu (t - 2\lfloor t/2 \rfloor - 1)\Delta}((\Psi_\nu^{\lfloor t/2 \rfloor} g)\circ \psi_1^{-1}) & 1 < t - 2\lfloor t/2 \rfloor < 2,
	\end{cases}
	\end{equation}
	where $\lfloor t/2 \rfloor$ denotes the largest integer less than or equal to $t/2$. Then, let $F_d(t) = S_\nu^t f_0$. This definition is simply such that $F_d$ solves the transport equation with velocity $v$ for $t \in (0,1)$, the heat equation for $t \in (1,2)$, and so on. 

 A key step in the proof of Lemma~\ref{lem:approximation} is an estimate on the error between $F_d$ and the continuous-time diffusion $f$. 
	
	\begin{lemma} \label{lem:L2approx}
		There exists a constant $C_1 > 0$ depending on $\sup_{0 \le t \le 1}\|v(t)\|_{\mathrm{Lip}}$ but not $\nu$ so that for every $n \in \N$, $\epsilon > 0$, and mean-zero $f_0 \in L^2$ we have
		\begin{equation} \|F_d(2n) - f(n)\|_{L^2}^2 = \| F_d(2n) - F(2n) \|_{L^2}^2 \le  C_1\epsilon \nu \int_0^{2n} \|\grad F_d(s)\|^2_{L^2} \dee s + C_1\epsilon^{-1}\nu \int_0^{n}  \|\grad f(s)\|^2_{L^2}\dee s.
		\end{equation}
	\end{lemma}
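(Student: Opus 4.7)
The plan is to derive a piecewise-in-$t$ energy inequality for the error $E(t) := F_d(t) - F(t)$ and then integrate over $[0,2n]$. Unpacking the definitions, on each even subinterval $[2k,2k+1)$ the functions $F$ and $F_d$ satisfy respectively the shifted advection--diffusion equation $\partial_t F + v(\cdot - 2k) \cdot \grad F = \nu \Delta F$ and the pure transport equation $\partial_t F_d + v(\cdot - 2k) \cdot \grad F_d = 0$; on each odd subinterval $[2k+1,2k+2)$, $F$ is constant in time while $F_d$ solves the heat equation $\partial_t F_d = \nu \Delta F_d$. By density it suffices to treat $f_0 \in H^1$, in which case all gradient norms below are finite.

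Next, on each piece I would compute $\tfrac{1}{2}\tfrac{d}{dt}\|E\|_{L^2}^2$ using these equations and integration by parts. On even subintervals the transport contribution vanishes by $\Div v = 0$, leaving $\tfrac{1}{2}\tfrac{d}{dt}\|E\|_{L^2}^2 = \nu \int_{\T^2} \grad E \cdot \grad F \, \dz$; writing $\grad E = \grad F_d - \grad F$ and applying Young's inequality with parameter $\epsilon$ yields $\tfrac{d}{dt}\|E\|_{L^2}^2 \le \nu \epsilon \|\grad F_d\|_{L^2}^2 + \nu \epsilon^{-1}\|\grad F\|_{L^2}^2$. On odd subintervals the same calculation gives $\tfrac{1}{2}\tfrac{d}{dt}\|E\|_{L^2}^2 = -\nu \|\grad F_d\|_{L^2}^2 + \nu \int \grad F \cdot \grad F_d \, \dz$, where the negative definite term partially absorbs the cross term and the identical inequality results. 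Since $E(0) = 0$, summing over all $n$ even and $n$ odd subintervals produces
\begin{equation}
\|E(2n)\|_{L^2}^2 \le \nu \epsilon \int_0^{2n}\|\grad F_d(s)\|_{L^2}^2 \ds + \nu \epsilon^{-1}\int_0^{2n}\|\grad F(s)\|_{L^2}^2 \ds.
\end{equation}

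It then remains to replace $\int_0^{2n}\|\grad F\|_{L^2}^2 \ds$ by a constant multiple of $\int_0^{n}\|\grad f\|_{L^2}^2 \ds$. On each even subinterval the substitution $s = k + (t-2k)$ immediately gives $\int_{2k}^{2k+1}\|\grad F(t)\|_{L^2}^2 \dt = \int_k^{k+1}\|\grad f(s)\|_{L^2}^2 \ds$. On each odd subinterval the integrand equals the constant $\|\grad f(k+1)\|_{L^2}^2$, which I would control via the standard identity $\tfrac{d}{dt}\|\grad f\|_{L^2}^2 \le 2 \sup_{\tau}\|v(\tau)\|_{\mathrm{Lip}}\|\grad f\|_{L^2}^2$ on each smooth piece of $v$ together with Gr\"onwall, yielding $\|\grad f(k+1)\|_{L^2}^2 \le C \int_k^{k+1}\|\grad f(s)\|_{L^2}^2 \ds$ for a constant $C$ depending only on $\sup_{t \in [0,1]}\|v(t)\|_{\mathrm{Lip}}$. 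Summing over $k$ converts the bound into the stated form.

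The principal technical nuisance is the bookkeeping at the points in $[0,2n]$ where $v$ is only piecewise continuous and where $F_d$ switches between transport and heat regimes. One needs $F$ and $F_d$ to be continuous in $t$ across each such transition so that the piecewise differential inequalities may be summed into a single estimate on $[0,2n]$; both continuities follow directly from the definitions, since the transport flow is continuous across each regular time $t_i$ and a heat step applied to a transported function is continuous in $t$. With that in hand, the Lipschitz hypothesis on $v$ enters only through the Gr\"onwall step above, which is why the final constant $C_1$ depends only on $\sup_{t \in [0,1]}\|v(t)\|_{\mathrm{Lip}}$.
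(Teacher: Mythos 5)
Your proof is correct and follows essentially the same route as the paper: you derive piecewise energy estimates for $F_d - F$ on even (transport vs. advection--diffusion) and odd (heat vs. constant) subintervals, integrate by parts using $\Div v = 0$, and then convert $\int_0^{2n}\|\grad F\|^2$ to $C\int_0^n\|\grad f\|^2$ by a change of variables on even intervals plus a Gr\"onwall-type bound $\|\grad f(k+1)\|_{L^2}^2 \le C\int_k^{k+1}\|\grad f(s)\|_{L^2}^2\,\dee s$ on odd ones. The only cosmetic difference is that you apply Young's inequality inside the differential inequality, whereas the paper first obtains the product bound $2\nu\int\|\grad F_d\|\|\grad F\|$ and applies Young's inequality afterward.
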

	
	\begin{proof}
 Here and throughout the remainder of this subsection, $C$ denotes a generic constant which may depend on $\sup_{0\le t \le 1}\|v(t)\|_{\mathrm{Lip}}$ but not on $\nu$ or the initial data $f_0$. We will prove that for every $n \in \N$ there holds 
 \begin{equation} \label{eq:approxgoal}
     \|F_d(2n) - F(2n)\|_{L^2}^2 \le C\epsilon \nu \int_0^{2n} \|\grad F_d(s)\|_{L^2}^2 \dee s + C \epsilon^{-1} \nu \int_0^{2n} \|\grad F(s)\|_{L^2}^2 \dee s.
 \end{equation}
 Given \eqref{eq:approxgoal}, the lemma follows because
 \begin{equation}
     \int_0^{2n} \|\grad F(s)\|_{L^2}^2 \dee s = \int_0^n \|\grad f(s)\|_{L^2}^2 \dee s + \sum_{k=1}^n \|\grad f(k)\|_{L^2}^2 \le C\int_0^n \|\grad f(s)\|_{L^2}^2 \dee s,
 \end{equation}
 where in the inequality we used that 
 $$ \|\grad f(k)\|_{L^2}^2 \le e^{Ct}\|\grad f(k-t)\|_{L^2}^2 \implies \|\grad f(k)\|_{L^2}^2 \le C\int_{k-1}^k \|\grad f(s)\|_{L^2}^2 \dee s, $$
 which holds by standard energy estimates for \eqref{eq:advectiondiffusion} with a uniformly-in-time Lipschitz velocity field. It remains then just to prove \eqref{eq:approxgoal}. We will assume $n = 1$, as the case $n > 1$ follows by iterating the same argument. For $t \in (0,1)$, we have 
		\begin{equation}
		\partial_t F_d + v \cdot \grad F_d = 0
		\end{equation}
		and 
		\begin{equation}
		\partial_t F + v \cdot \grad F = \nu \Delta F.
		\end{equation}
		Thus, an energy estimate gives
		\begin{equation}
		\frac{d}{dt}\|F_d(t) - F(t)\|_{L^2}^2 + 2\int_{\T^2} (F_d - F)v \cdot \grad (F_d - F) = 2\nu \int_{\T^2}(F - F_d) \Delta F .
		\end{equation}
		Integrating by parts we obtain
		\begin{equation}
		\frac{d}{dt}\|F_d(t) - F(t)\|_{L^2}^2 \le 2 \nu \|\grad F_d(t)\|_{L^2} \|\grad F(t)\|_{L^2},
		\end{equation}
and so
		\begin{equation} \label{eq:approx1}
		\|F_d(1) - F(1)\|_{L^2}^2 \le  2\nu \int_0^1 \|\grad F_d(t)\|_{L^2} \|\grad F(t)\|_{L^2} \dt.
		\end{equation}
		A similar computation using that for $t \in (1,2)$ we have 
$ \partial_t F_d  = \nu \Delta F_d$
  and $\partial_t F = 0$ shows that 
\begin{equation} \label{eq:approx2}
\|F_d(2) - F(2)\|_{L^2}^2 \le \|F_d(1) - F(1)\|_{L^2}^2 + 2\nu \int_1^2 \|\grad F_d(t)\|_{L^2} \|\grad F(t)\|_{L^2} \dt.
\end{equation}
		Combining \eqref{eq:approx1} and \eqref{eq:approx2} completes the proof of \eqref{eq:approxgoal} with $n = 1$.
	\end{proof}

 We now give a uniform-in-$\nu$ bound on $\nu \int_{0}^{\infty} \|\grad F_d(s)\|_{L^2}^2 \ds$ which we will use in the approximation estimate of Lemma~\ref{lem:L2approx}. 
	
	\begin{lemma}\label{lem:dissbound}
		Suppose that $f_0 \in L^2$ is such that 
		\begin{equation}
		\nu \|\grad f_0\|^2_{L^2} \le C_0 \|f_0\|_{L^2}^2
		\end{equation}
		for some constant $C_0 > 0$. Then, there exists a constant $C_2$ depending on $C_0$ and $\sup_{0 \le t \le 1}\|v(t)\|_{\mathrm{Lip}}$, but not on $\nu$ or $f_0$, 
	such that 
	$$\nu \int_0^\infty \|\grad F_d(t)\|_{L^2}^2 \dt \le C_2\|f_0\|_{L^2}^2.$$
	\end{lemma}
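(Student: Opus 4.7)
The plan is to split the time axis into the alternating ``transport'' intervals $(2k,2k+1)$ and ``heat'' intervals $(2k+1,2k+2)$ on which $F_d$ obeys a pure transport or pure diffusion equation, respectively, and then bound each contribution by a telescoping energy identity. Abbreviate $L = \sup_{0 \le t \le 1}\|v(t)\|_{\mathrm{Lip}}$ and write $\|\cdot\| = \|\cdot\|_{L^2}$.

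On each heat interval, the standard identity $\tfrac{1}{2}\tfrac{d}{dt}\|F_d\|^2 = -\nu \|\grad F_d\|^2$ integrates to
\[
\nu \int_{2k+1}^{2k+2}\|\grad F_d(s)\|^2 \ds = \tfrac{1}{2}\bigl(\|F_d(2k+1)\|^2 - \|F_d(2k+2)\|^2\bigr),
\]
and since $\|F_d(\cdot)\|$ is nonincreasing on $[0,\infty)$ (transport conserves it, diffusion decreases it), telescoping over $k \ge 0$ yields $\sum_k \nu \int_{2k+1}^{2k+2} \|\grad F_d(s)\|^2\ds \le \tfrac{1}{2}\|f_0\|^2$.

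On each transport interval, since $v$ is Lipschitz and divergence-free, the flow map $\psi$ satisfies $\|\grad \psi_\tau^{\pm 1}\|_{L^\infty} \le e^{L\tau}$, and the representation $F_d(t,z) = F_d(2k, \psi_{t-2k}^{-1}(z))$ combined with the area-preservation of $\psi$ produces the bound $\|\grad F_d(t)\|^2 \le e^{2L}\|\grad F_d(2k)\|^2$ for $t \in [2k,2k+1]$. Hence
\[
\sum_{k=0}^\infty \nu \int_{2k}^{2k+1} \|\grad F_d(s)\|^2 \ds \le e^{2L} \sum_{k=0}^\infty \nu \|\grad F_d(2k)\|^2,
\]
and it remains to control the series on the right.

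The key observation is that on each heat interval $(2k-1,2k)$ the quantity $t \mapsto \|\grad F_d(t)\|^2$ is nonincreasing, because a direct energy computation for $\partial_t F_d = \nu \Delta F_d$ gives $\tfrac{d}{dt}\|\grad F_d\|^2 = -2\nu \|\Delta F_d\|^2 \le 0$. Consequently $\|\grad F_d(2k)\|^2 \le \|\grad F_d(s)\|^2$ for every $s \in [2k-1,2k]$, which integrated over this interval yields
\[
\nu \|\grad F_d(2k)\|^2 \le \nu \int_{2k-1}^{2k} \|\grad F_d(s)\|^2 \ds = \tfrac{1}{2}\bigl(\|F_d(2k-1)\|^2 - \|F_d(2k)\|^2\bigr).
\]
Summing over $k \ge 1$ telescopes to at most $\tfrac{1}{2}\|f_0\|^2$, while the $k = 0$ term is controlled directly by the hypothesis $\nu\|\grad f_0\|^2 \le C_0 \|f_0\|^2$. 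Adding the heat and transport contributions yields the claim with $C_2 = \tfrac{1}{2} + e^{2L}(C_0 + \tfrac{1}{2})$. The main conceptual point — and the step I would double-check — is the monotonicity of $\|\grad F_d\|^2$ on heat intervals; without it, the naive transport growth estimate would give an exponentially-in-$k$ bound, and one would have no hope of summability. The role of this monotonicity is precisely to ensure that the gradient-energy at the end of each heat interval is dominated by the heat dissipation on that same interval, so that the sum of these endpoint values is absorbed into a telescoping $L^2$ bound.
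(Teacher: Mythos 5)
Your proof is correct and follows essentially the same strategy as the paper: telescope the heat-interval dissipation to $\tfrac{1}{2}\|f_0\|^2$, bound each transport interval by $e^{2L}$ times the gradient at its left endpoint, and control those endpoint values by the hypothesis (for $k=0$) together with the monotonicity of $t\mapsto\|\grad F_d(t)\|_{L^2}$ on the preceding heat interval (for $k\ge 1$). The paper phrases the last step as the local estimate $\nu\int_{2n-1}^{2n+1}\|\grad F_d\|_{L^2}^2 \le C\,\nu\int_{2n-1}^{2n}\|\grad F_d\|_{L^2}^2$ rather than summing the endpoint gradients directly, but the mechanism is identical.
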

	
	\begin{proof}
		First observe that by the definition of $F_d$, the conservation of the $L^2$ norm for solutions of \eqref{eq:advection}, and the energy equality for the heat equation, we have
	\begin{equation} \label{eq:dissbound1}
		\nu \sum_{n=1}^\infty \int_{2n-1}^{2n}\|\grad F_d(t)\|_{L^2}^2 \dt \le \frac{1}{2}\|f_0\|_{L^2}^2.
			\end{equation} 
			Moreover, since $F_d$ solves the transport equation for $t \in (0,1)$ and $v$ is uniformly Lipschitz, we have
			\begin{equation} \label{eq:dissbound2}
			\nu \int_0^1 \|\grad F_d(t)\|_{L^2}^2 \dt \le C \nu \|\grad F_d(0)\|_{L^2}^2 = C \nu \|\grad f_0\|_{L^2}^2 \le CC_0 \|f_0\|_{L^2}^2.
			\end{equation}
   Combining \eqref{eq:dissbound1} and \eqref{eq:dissbound2}, it suffices to show that 
			\begin{equation} \label{eq:dissbound3}
			\nu \int_{2n-1}^{2n+1} \|\grad F_d(t)\|_{L^2}^2 \dt \le C \nu \int_{2n-1}^{2n} \|\grad F_d(t)\|_{L^2}^2 \dt
			\end{equation}
			for every $n \in \N$. We will just prove the estimate for $n = 1$, as the case where $n > 1$ is no different. For $t \in (1,2)$, the pulsed diffusion $F_d$ solves the heat equation, and hence $t \mapsto \| \grad F_d(t)\|_{L^2}$ is monotone decreasing. Thus, 
		\begin{equation} \label{eq:monotonediss} \nu \int_1^2 \|\grad F_d(t)\|_{L^2}^2 \dt \ge \nu \|\grad F_d(2)\|_{L^2}^2.
		\end{equation}
		Now, since $F_d$ solves the advection again for $t \in (2,3)$, we have
		\begin{equation} \label{eq:lipgradbound}
		\nu \int_2^3 \|\grad F_d(t)\|_{L^2}^2 \dt \le \nu \sup_{2 \le t \le 3}  \|\grad F_d(t)\|_{L^2}^2 \le C \nu \|\grad F_d(2)\|_{L^2}^2.
		\end{equation}
		Combining \eqref{eq:monotonediss} and \eqref{eq:lipgradbound} gives \eqref{eq:dissbound3} with $n = 1$, which completes the proof. 
  \end{proof}

  With Lemmas~\ref{lem:L2approx} and~\ref{lem:dissbound} at hand, we are ready to complete the proof of Lemma~\ref{lem:approximation}.
	
	\begin{proof}[Proof of Lemma~\ref{lem:approximation}]
		Fix mean-zero $f_0 \in L^2$ and assume for now that \begin{equation} \label{eq:regulardata}
  \nu\|\grad f_0\|_{L^2}^2 \le C_0\|f_0\|_{L^2}^2
  \end{equation}
  for some constant $C_0 > 0$. Let $n_0$ be as in the statement of Lemma~\ref{lem:approximation}. The basic energy estimate for $f$ on the time interval $[0,n_0]$ reads
		\begin{equation} \label{eq:basicenergy}
		\|f(n_0)\|_{L^2}^2 = \|f_0\|_{L^2}^2 - 2\nu \int_0^{n_0} \|\grad f(t)\|_{L^2}^2 \dee t.
		\end{equation}
  Suppose that 
  \begin{equation} \label{eq:contradiction} 2\nu \int_0^{n_0} \|\grad f(t)\|_{L^2}^2 dt \le \delta^2\|f_0\|_{L^2}^2
\end{equation}
for $\delta \in (0,1)$. Our goal is to prove that $\delta$ is bounded below independently of $\nu$. Let $F_d$ be as defined earlier and let $C_1$ and $C_2$ be as in the statements of Lemmas~\ref{lem:L2approx} and~\ref{lem:dissbound}, respectively. By \eqref{eq:contradiction}, Lemma~\ref{lem:L2approx} applied with $\epsilon = \delta$, and Lemma~\ref{lem:dissbound} we have 
		\begin{equation} \label{eq:finalapprox}
\|F_d(2n_0) - f(n_0)\|_{L^2}^2 \le C_1(C_2+1)\delta \|f_0\|_{L^2}^2:= C_3  \delta \|f_0\|_{L^2}^2.
		\end{equation}
		Using the assumption \eqref{eq:discretegeneral} we have 
		\begin{equation}
		\|F_d(2n_0)\|_{L^2} = \|\Psi_\nu^{n_0}f_0\|_{L^2} \le \frac{1}{2}\|f_0\|_{L^2}.
		\end{equation}
		Thus, from the triangle inequality and \eqref{eq:finalapprox} we get
		$$\|f(n_0)\|_{L^2}^2 \le 2 \|f(n_0) - F_d(2n_0)\|_{L^2}^2 + \frac{1}{2}\|f_0\|_{L^2}^2 \le \left(2\delta C_3 + \frac{1}{2}\right)\|f_0\|_{L^2}^2 ,$$
		which together with \eqref{eq:basicenergy} implies 
		$$2\nu \int_0^{n_0} \|\grad f(t)\|_{L^2}^2 \dt = \|f_0\|_{L^2}^2 - \|f(n_0)\|_{L^2}^2 \ge \left(\frac{1}{2} - 2\delta C_3\right)\|f_0\|_{L^2}^2.$$
		The previous estimate and \eqref{eq:contradiction} yield
		$$\frac{1}{2} - 2\delta C_3 \le \delta^2 \implies \delta \ge \frac{1}{2(1+2C_3)}. $$
	Combined with \eqref{eq:basicenergy}, we have shown that if \eqref{eq:regulardata} is satisfied, then there exists a constant $c > 0$ depending only on $C_0$ and $\sup_{0\le t \le 1}\|v(t)\|_{\mathrm{Lip}}$ such that 
 \begin{equation} \label{eq:approxstep1}
     \|f(n_0)\|_{L^2} \le (1-c)\|f_0\|_{L^2}.
 \end{equation}

We now turn to the general case where \eqref{eq:regulardata} does not necessarily hold. If
		\begin{equation} \label{eq:finalapprox1}
		\nu \int_0^1 \|\grad f(t)\|_{L^2}^2 \dt \ge (1/4)\|f_0\|_{L^2}^2,
		\end{equation} 
		then since 
		\begin{equation} \label{eq:finalapprox2}
		\|f(1)\|_{L^2}^2 = \|f_0\|_{L^2}^2 - 2\nu \int_0^1 \|\grad f(t)\|_{L^2}^2 \dt
		\end{equation}
		we see that $$\|f(t)\|_{L^2}^2 \le \|f(1)\|_{L^2}^2 \le (1/2)\|f_0\|_{L^2}^2$$ for every $t \ge 1$, and there is nothing to prove. We may thus assume that \eqref{eq:finalapprox1} is not satisfied, in which case Chebyshev's inequality and $\|\grad f(t)\|_{L^2} \le e^{C(t-s)}\|\grad f(s)\|_{L^2}$ imply 
		$$\nu \|\grad f(1)\|_{L^2} \le C\|f_0\|_{L^2}^2 \le C\|f(1)\|_{L^2}^2.$$
	In the last step above we used that 
$$
 \|f_0\|_{L^2}^2 \le 2\|f(1)\|_{L^2}^2$$
 whenever \eqref{eq:finalapprox1} does not hold. Thus, we may assume that $f(1)$ satisfies \eqref{eq:regulardata}. Applying \eqref{eq:approxstep1} with $f_0$ replaced by $f(1)$ we conclude 
 $$\|f(n_0 + 1)\|_{L^2} \le (1-c)\|f(1)\|_{L^2} \le (1-c)\|f_0\|_{L^2},$$
 which completes the proof.
	\end{proof}

 \begin{remark}
	As discussed in the introduction, similar arguments have appeared in previous works \cite{CZED20, FengIyer19} and been used to prove that a Lipschitz, exponentially mixing flow is dissipation enhancing on the timescale $|\log \nu|^2$. It is worth noting precisely what allows us to remove the 2 from the exponent of the logarithm. The standard approach is to approximate the advection-diffusion equation by the solution $f_0 \circ \psi_t^{-1}$ to the associated $\nu = 0$ problem and use a version of Lemma~\ref{lem:L2approx} with the error bound replaced by
	$$\|f_0 \circ \psi_t^{-1} - f(t)\|_{L^2}^2 \le 2\nu \int_0^t \|\Delta f(s)\|_{L^2} \|f_0 \circ \psi_s^{-1}\|_{L^2} \dee s.$$
Within this scheme, two derivatives are put on $f$ because there are no a priori bounds available on the $H^1$ norm of the approximating $\nu = 0$ solution. Estimating $\|\Delta f\|_{L^2}$, however, leads to important losses. We are able to put a derivative on our approximating solution $F_d$ because it experiences diffusion and hence satisfies good global $H^1$ bounds (Lemma~\ref{lem:dissbound} above). This is what underlies our ability to get the optimal enhanced dissipation timescale with our approximation method.
\end{remark}

\subsection{Exponential mixing for $u_\alpha$} \label{sec:prelimexpmixing}
	
In this section we obtain as a corollary of Lemma~\ref{lem:weakmix} that the geometric mixing scale of solutions to \eqref{eq:advection} decays exponentially fast. We also show how exponential mixing for $u_\alpha$ in the sense of Definition~\ref{ExponentialMixing} follows by applying in addition the main result of \cite{DemersLiverani08}. 

\subsubsection{Decay of the geometric mixing scale}

We begin by recalling the definition of the geometric mixing scale \cite{YaoZlatos17, ACM19, ElgindiZlatos19}.

\begin{definition} \label{def:geomixing}
Given fixed $\kappa \in (0,1)$, the geometric mixing scale of a mean-zero function $f \in L^\infty$ is defined as
\begin{equation} 
    \mathrm{mix}_\kappa(f) = \inf \left\{ 2^{-N}: \left|\;\dashint_{R} f\;\right| \le \kappa \|f\|_{L^\infty} \quad \forall R \in \mathcal{R}_N\right\}.
\end{equation}
\end{definition}

\noindent In the simple case where $f = \chi_A - \chi_{A^c}$, $\mathrm{mix}_\kappa(f) = 2^{-N}$ for $N$ the largest nonnegative integer such that every square $R \in \mathcal{R}_N$ consists of at least a fraction $(1-\kappa)/2$ of points where $f$ is both $+1$ and $-1$.

\begin{remark}
For $\kappa \in (0,1)$, the geometric mixing scale is typically defined as the infimum over all $\epsilon > 0$ such that 
\begin{equation} \label{eq:ballmix}
    \left| \;\dashint_{B_\epsilon(z)} f\; \right| \le \kappa \|f\|_{L^\infty}
\end{equation}
for every $z \in \T^2$, where $B_\epsilon(z)$ denotes the open ball of radius $\epsilon$ centered at $z$. Decay of $\mathrm{mix}_{\kappa'}(f)$ for some $\kappa' \in (0,1)$ in the sense of Definition~\ref{def:geomixing} implies decay of the geometric mixing scale defined using \eqref{eq:ballmix}. Indeed, it is easy to check that if $\mathrm{mix}_{\kappa'}(f) = 2^{-N}$, then there exists $\kappa \in (0,1)$ depending only on $\kappa'$ such that \eqref{eq:ballmix} holds for every $z \in \T^2$ and $\epsilon \le C2^{-N}$, where $C$ is a constant that does not depend on $\kappa'$ or $f$.
\end{remark}

\begin{theorem} \label{thrm:geomix}
For all $\alpha$ sufficiently large there exist constants $\kappa \in (0,1)$ and $C,c > 0$ such that for every mean-zero $f \in C^1$ we have 
\begin{equation}
    \mathrm{mix}_\kappa(f\circ \phi_t^{-1}) \le C \|f\|_{C^1}\|f\|_{L^\infty}^{-1}  e^{-ct}.
\end{equation}
\end{theorem}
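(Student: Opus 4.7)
The plan is to apply Lemma~\ref{lem:weakmix} with the zero sequence $\xi = 0 \in \Omega = (\R^2)^\N$, which gives $T_0^n = T^n$ with $T = \phi_1^{-1}$, and combine it with a $C^1$ approximation of $f$ by a piecewise-constant function on the dyadic grid $\mathcal{R}_N$. The scale $N$ will be chosen as large as possible (essentially $\lfloor t/C_0 \rfloor$) so that the decay of $2^{-N}$ translates into exponential decay of the mixing scale in $t$. First, by writing $t = n + s$ with $s \in [0,1)$, one has $\phi_t^{-1} = \phi_s^{-1} \circ T^n$, and the uniform-in-$s$ bi-Lipschitz continuity of $\phi_s$ for $s \in [0,1]$ (a consequence of the spatial Lipschitz regularity of $u_\alpha$) yields $\|f \circ \phi_s^{-1}\|_{C^1} \lesssim_\alpha \|f\|_{C^1}$ while preserving the $L^\infty$ norm and the mean-zero property; thus it suffices to handle integer times $t = n$.

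Fix $n \ge 1$, a scale $N \le \lfloor n/C_0 \rfloor$, and a square $R \in \mathcal{R}_N$. Since $T$ is measure-preserving, a change of variables gives
\[ \dashint_R f \circ T^n = \frac{1}{m(R)} \int_{T^n(R)} f. \]
I approximate $f$ by its conditional expectation $\tilde f := \sum_{Q \in \mathcal{R}_N}(\dashint_Q f)\chi_Q$, for which the $C^1$ regularity gives $\|f - \tilde f\|_{L^\infty} \le \sqrt{2}\, 2^{-N}\|f\|_{C^1}$ and $\int \tilde f = \int f = 0$. Applying Lemma~\ref{lem:weakmix} at scale $N$ with $\xi = 0$, for each $Q \in \mathcal{R}_N$ write
\[ m(T^n(R) \cap Q) = \lambda\, m(R)\, m(Q) + r_{R,Q}, \qquad r_{R,Q} \ge 0, \]
and summing over $Q$ gives $\sum_Q r_{R,Q} = (1-\lambda) m(R)$. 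Then
\[ \int_{T^n(R)} \tilde f = \lambda m(R) \int \tilde f + \sum_Q (\dashint_Q f) r_{R,Q} = \sum_Q (\dashint_Q f)\, r_{R,Q}, \]
whose absolute value is at most $(1-\lambda)\|f\|_{L^\infty} m(R)$. Combining with the approximation error and dividing by $m(R)$ gives
\[ \left| \dashint_R f \circ T^n \right| \le (1-\lambda)\|f\|_{L^\infty} + \sqrt{2}\, 2^{-N}\|f\|_{C^1}. \]

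Setting $\kappa = 1 - \lambda/2 \in (0,1)$ and taking $N = \lfloor n/C_0 \rfloor$, the right-hand side is at most $\kappa \|f\|_{L^\infty}$ as soon as $\sqrt{2}\, 2^{-N}\|f\|_{C^1} \le (\lambda/2)\|f\|_{L^\infty}$, which amounts to $n \gtrsim C_0 \log_2(\|f\|_{C^1}/\|f\|_{L^\infty})$. For all such $n$, $\mathrm{mix}_\kappa(f\circ T^n) \le 2^{-N} \le 2 \cdot 2^{-n/C_0}$, giving the desired exponential decay with rate $c = (\log 2)/C_0$. For smaller $n$ the trivial bound $\mathrm{mix}_\kappa(\cdot) \le 1$ is dominated by $C(\|f\|_{C^1}/\|f\|_{L^\infty}) e^{-cn}$ once $C$ is taken sufficiently large, since $\|f\|_{C^1} \ge \|f\|_{L^\infty}$ and $e^{-cn} \le 1$. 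The main technical tension is the tradeoff between taking $N$ small enough that Lemma~\ref{lem:weakmix} is available at scale $N$ after $n$ iterates (forcing $N \le n/C_0$) and $N$ large enough that the $C^1$ approximation error $2^{-N}\|f\|_{C^1}$ stays below $\|f\|_{L^\infty}$; saturating the first constraint converts the single lower bound supplied by Lemma~\ref{lem:weakmix} into exponential decay of the mixing scale.
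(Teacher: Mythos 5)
Your proof is essentially identical in structure to the paper's: both reduce to integer times, both approximate $f$ by its $\mathcal{R}_N$-conditional expectation, and both use Lemma~\ref{lem:weakmix} with $\xi=0$ to show the average of the piecewise-constant approximation over a dyadic square after $n\gtrsim N$ iterates of $T$ is at most $(1-\lambda)\|f\|_{L^\infty}$. Your decomposition $m(T^n(R)\cap Q)=\lambda m(R)m(Q)+r_{R,Q}$ with $\sum_Q r_{R,Q}=(1-\lambda)m(R)$ is a clean algebraic restatement of the paper's partition of each $R_i$ into the ``well-mixed'' pieces $Q_{ij}$ and the remainder $P_i$, and yields the same bound.

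One small point worth tightening (shared with the paper's final sentence): with $t=n+s$, $s\in[0,1)$, time-periodicity gives $\phi_{n+s}=\phi_s\circ\phi_n$ and hence $\phi_t^{-1}=T^n\circ\phi_s^{-1}$, not $\phi_s^{-1}\circ T^n$. So $f\circ\phi_t^{-1}=(f\circ T^n)\circ\phi_s^{-1}$, and the reduction to integer times should instead argue that precomposition by the uniformly bi-Lipschitz, measure-preserving map $\phi_s^{-1}$ only distorts the dyadic squares into sets of comparable diameter and measure, so the mixing scale of $f\circ T^n$ controls that of $(f\circ T^n)\circ\phi_s^{-1}$ up to a constant (cf.\ the Remark following Definition~\ref{def:geomixing} relating square averages to ball averages). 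The fix is routine and does not change the rate.
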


\begin{proof}
Let $\mathcal{H}_N$ denote the set of simple functions which are mean zero and constant on each element of $\mathcal{R}_N$. We will first consider functions that are in some $\mathcal{R}_N$ and then extend to the general case by approximation. Fix $M \in \N$ and $f \in \mathcal{H}_M$. Let $C_0$ and $\lambda$ be as in Lemma~\ref{lem:weakmix}. Let $N \ge M$ and $n \ge C_0 N$. By Lemma~\ref{lem:weakmix}, for every $R_i, R_j \in \mathcal{R}_N$ we have 
\begin{equation}
    m(R_i \cap T^n(R_j)) = m(T^{-n}(R_i) \cap R_j) \ge \lambda m(R_i) m(R_j). 
\end{equation}
It follows that there exists a measurable set $Q_{ij} \subseteq R_i \cap T^n(R_j)$ such that $$m(Q_{ij}) = m(T^{-n}(Q_{ij}) \cap R_j) = \lambda m(R_i) m(R_j).$$ 
Note that the sets $Q_{ij}$ are disjoint because $T^n$ is a bijection. Define 
$$ P_i = R_i \setminus \bigcup_j Q_{ij}. $$
Since $f \in \mathcal{H}_M$ and $N \ge M$, we can write $f = \sum_{R_i \in \mathcal{R}_N} c_i \chi_{R_i}$ and decompose $f$ as 
$$f = \sum_{R_i \in \mathcal{R}_N} c_i \chi_{P_i} + \sum_{R_i, R_j \in \mathcal{R}_N} c_i \chi_{Q_{ij}}: = f_P + f_Q.$$
Then, for any $R_k \in \mathcal{R}_N$ we have 
\begin{equation} \label{eq:Qint}
    \dashint_{R_k} f_{Q} \circ T^n = \frac{1}{m(R_k)}\sum_{R_i \in \mathcal{R}_N} c_i m(T^{-n}(Q_{ik}) \cap R_k) = \lambda \sum_{R_i \in \mathcal{R}_N} c_i m(R_i) = \lambda \int_{\T^2} f = 0
\end{equation}
and
\begin{equation} \label{eq:Pint}
    \left| \dashint_{R_k} f_{P} \circ T^n\right| = \frac{1}{m(R_k)}\left|\sum_{R_i \in \mathcal{R}_N} c_i m(T^{-n}(P_i) \cap R_k)\right| \le \|f\|_{L^\infty} \frac{1}{m(R_k)}\sum_{R_i \in \mathcal{R}_N} m(T^{-n}(P_i) \cap R_k).
\end{equation}
Observe now that
\begin{align*} \sum_{i} m(T^{-n}(P_i) \cap R_k)
&= \sum_{R_i \in \mathcal{R}_N} m(T^{-n}(R_i) \cap R_k) - \sum_{R_i, R_j \in \mathcal{R}_N} m(T^{-n}(Q_{ij}) \cap R_k) \\
&= m(R_k) - \sum_{R_i, R_j \in \mathcal{R}_N}m(T^{-n}(Q_{ij}) \cap R_k) \\ 
& = m(R_k) - \sum_{R_i \in \mathcal{R}_N} m(T^{-n}(Q_{ik}) \cap R_k) \\
& = m(R_k) - \sum_{R_i \in \mathcal{R}_N} \lambda m(R_i)m(R_k) = (1-\lambda)m(R_k).
\end{align*}
Putting this equality into \eqref{eq:Pint} and applying also \eqref{eq:Qint} proves that 
\begin{equation} \label{eq:geosimple}
    \sup_{R \in \mathcal{R}_N}\left|\dashint_R f \circ T^n\right| \le (1-\lambda)\|f\|_{L^\infty}
\end{equation}
for every $N\ge M$ and $n \ge C_0 N$.

Now fix $f \in C^1$ and for $M$ to be chosen, let $\tilde{f} \in \mathcal{H}_M$ be such that 
\begin{equation}
   \tilde{f}(z) = \dashint_{R} f
\end{equation}
for every $R \in \mathcal{R}_M$ and $z \in R$. Then, by \eqref{eq:geosimple}, for every $N \ge M$, $n \ge C_0 N$, and $R \in \mathcal{R}_N$ we have 
\begin{equation}
    \left|\dashint_R f \circ T^n \right| \le (1-\lambda)\|f\|_{L^\infty} + 2^{-M+1}\|f\|_{\mathrm{Lip}}.
\end{equation}
Taking $M$ to the smallest natural number such that
\begin{equation} \label{eq:Mcriteria}
2^{-M+1} \le (\lambda/2)\|f\|_{L^\infty}\|f\|_{\mathrm{Lip}}^{-1},
\end{equation}
it follows that for $\kappa = 1-\lambda/2$ there holds
$$ \mathrm{mix}_{\kappa}(f \circ T^n) \le 2^{-N} $$
for every $N \ge M$ and $n \ge C_0 N$. This implies that for every $n \in \N$ we have 
\begin{equation} \label{eq:geomixdiscrete}
    \mathrm{mix}_\kappa (f \circ T^n) \le 2^{M}2^{-n/C_0} \le (2/\lambda)\|f\|_{\mathrm{Lip}}\|f\|_{L^\infty}^{-1} 2^{-n/C_0},
\end{equation}
which is the desired estimate at integer times. The estimate at general times follows by applying \eqref{eq:geomixdiscrete} with $f$ replaced by $f \circ \phi_s^{-1}$ for suitable $s \in (0,1)$ and noting that
$$ \|f\circ \phi_s^{-1}\|_{\mathrm{Lip}}\|f \circ \phi_s^{-1}\|_{L^\infty}^{-1} \le C\|f\|_{C^1}\|f\|_{L^\infty}^{-1}$$
for a constant $C$ depending only on $\alpha$.
\end{proof}

\subsubsection{Decay of correlations}

We now establish exponential mixing for $u_\alpha$ in the sense of Definition~\ref{ExponentialMixing}, completing the proof of Theorem~\ref{thrm:mix}. 

We begin with the exponential mixing of $T$. For $\alpha$ sufficiently large the map $T$ falls within the general class of piecewise hyperbolic maps studied in \cite{DemersLiverani08}. This is an easy consequence of the uniform hyperbolicity and singularity set structure of $T$ discussed in Section~\ref{sec:dynamics}. It follows from \cite[Theorem 2.8]{DemersLiverani08} that if $T^k$ is ergodic with respect to Lebesgue measure for every $k \in \N$, then $T$ is mixing and enjoys exponential decay of correlations for $C^1$ observables. Thus, we just need to prove that $T^k$ is ergodic, which we will show follows from Lemma~\ref{lem:weakmix}.

\begin{lemma} \label{thrm:Tmix}
    For all $\alpha$ sufficiently large, $T^k$ is ergodic with respect to Lebesgue measure for every $k \in \N$ and consequently there exist constants $c, C > 0$ such that 
    \begin{equation}
        \left|\;\dashint (f \circ T^n)\, g - \dashint f \;\dashint g\;\right| \le Ce^{-cn}\|f\|_{C^1}\|g\|_{C^1}
    \end{equation}
for every $f,g \in C^1(\T^2)$.
\end{lemma}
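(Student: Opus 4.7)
The lemma has two parts: ergodicity of every iterate $T^k$ with respect to Lebesgue measure, and the consequent exponential decay of correlations. The second part follows from \cite[Theorem 2.8]{DemersLiverani08} once we verify that $T$ belongs to the class of piecewise hyperbolic maps considered there; this verification is essentially read off from the uniform hyperbolicity, cone conditions, and singularity set structure established in Section~\ref{sec:dynamics}, and the paper has already flagged it as routine. Thus the only content to establish is ergodicity of $T^k$ for every $k \in \N$, which I plan to derive directly from the geometric mixing lemma.

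The plan for ergodicity is to apply Lemma~\ref{lem:weakmix} at the point $\xi = 0 \in \Omega$, in which case $T^n_\xi$ reduces to $T^n$, and then argue by contradiction. Fix $k \in \N$ and suppose $A$ is a $T^k$-invariant measurable set with $0 < m(A) < 1$. By inner/outer regularity of Lebesgue measure, for any $\eta > 0$ I can approximate $A$ by a finite union $A_M$ of squares in $\mathcal{R}_M$ (for some $M$ depending on $\eta$) satisfying $m(A \triangle A_M) \le \eta$. Summing the estimate of Lemma~\ref{lem:weakmix} over pairs $(R,Q) \in \mathcal{R}_M \times \mathcal{R}_M$ with $R \subseteq A_M$ and $Q \subseteq A_M^c$ gives
\begin{equation}
m(T^n(A_M) \cap A_M^c) \ge \lambda\, m(A_M)\, m(A_M^c)
\end{equation}
for every $n \ge C_0 M$. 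I will then choose $n$ to be a sufficiently large multiple of $k$, so that $T^n(A) = A$, and replace $A_M$ by $A$ at the cost of at most $2\eta$ in error (using that $T^n$ preserves Lebesgue measure to control $m(T^n(A_M) \triangle T^n(A))$). Since $m(A \cap A^c) = 0$, this yields
\begin{equation}
0 \ge \lambda\, m(A_M)\, m(A_M^c) - 2\eta,
\end{equation}
and choosing $\eta$ small (say $\eta < \lambda m(A)m(A^c)/8$) produces a contradiction.

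The only mild subtlety is reconciling the two large-$n$ conditions, $n \ge C_0 M$ and $n \in k\N$, but this is trivially handled by picking $n$ to be a sufficiently large multiple of $k$. I do not expect a serious obstacle, as Lemma~\ref{lem:weakmix} is already a quantitative mixing statement at every dyadic scale for all $\xi$, which is much stronger than what ergodicity of a single iterate requires. Once ergodicity of $T^k$ for every $k$ is in hand, the exponential decay of correlations for $C^1$ observables is immediate from \cite[Theorem 2.8]{DemersLiverani08}, and Theorem~\ref{thrm:mix} follows by passing from the discrete time-$1$ map $T$ to the continuous flow $\phi_t$ in the standard way, paying only a uniformly bounded Lipschitz distortion for the fractional-time part of the evolution.
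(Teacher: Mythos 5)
Your argument is correct, and for the decay-of-correlations part it coincides with the paper: both reduce the lemma to checking ergodicity of every iterate $T^k$ and then invoke \cite[Theorem 2.8]{DemersLiverani08}, the verification that $T$ lies in the Demers--Liverani class being a routine consequence of the uniform hyperbolicity and singularity structure from Section~\ref{sec:dynamics}. Where you differ is in how ergodicity is extracted from Lemma~\ref{lem:weakmix}. You argue directly at the level of sets: approximate a putative invariant set $A$ by a union $A_M$ of dyadic squares, sum the bound $m(T^n(R)\cap Q)\ge \lambda\, m(R)\,m(Q)$ over $R\subseteq A_M$, $Q\subseteq A_M^c$, take $n$ a large multiple of $k$ (which is compatible with $n\ge C_0M$), and use measure preservation to transfer the lower bound from $A_M$ to $A$, contradicting $m(A\cap A^c)=0$; this works because for indicator functions the coefficients are nonnegative, so the geometric mixing bound can be summed directly. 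The paper instead works in $L^2$: it first proves a correlation estimate for mean-zero simple functions in $\mathcal{H}_N$ (the analogue of \eqref{eq:ergodicgoal}), obtained by the same mass-redistribution decomposition ($Q_{ij}$ and $P_i$ pieces) used in the proof of Theorem~\ref{thrm:geomix} to handle signed coefficients, and then derives a contradiction from an invariant mean-zero $f\in L^2$ approximated by a simple function; it also reduces to $k=1$ by noting Lemma~\ref{lem:weakmix} applies verbatim to $T^k$, whereas you keep general $k$ and restrict $n$ to multiples of $k$. Your route is somewhat more elementary and self-contained for the ergodicity step, while the paper's functional version reuses machinery it needs anyway for the geometric mixing scale; both are legitimate, and your quantitative bookkeeping (the $2\eta$ error and the smallness condition on $\eta$) is sound.
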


\begin{proof}
The proof that $T^k$ is ergodic relies only on the fact that Lemma~\ref{lem:weakmix} holds for $\xi = 0$ with $T$ replaced by $T^k$, and so we may take $k = 1$ without loss of generality. Let $\mathcal{H}_N$ be as defined in the proof of Theorem~\ref{thrm:geomix}. Using Lemma~\ref{lem:weakmix} and following the idea of the proof of \eqref{eq:geosimple}, we can show that there exists $\delta > 0$ such that for any $N \in \N$ and $f \in \mathcal{H}_N$ there exists $n \in \N$ for which
    \begin{equation} \label{eq:ergodicgoal}
    \left|\;\int_{\T^2} (f \circ T^n)\, f \;\right| \le (1-\delta)\|f\|_{L^2}^2.
    \end{equation}
    We omit the details for the sake of brevity.
    
    Let $f \in L^2$ be mean zero and invariant for $T$. That is, $f \circ T = f$ almost everywhere. We need to show that $f = 0$. Suppose for contradiction that this is not the case. Then, by standard approximation arguments, for any $\epsilon > 0$ there exists $N \in \N$ and $\psi \in \mathcal{H}_N$ such that $\|f - \psi\|_{L^2} \le \epsilon\|f\|_{L^2}$. Since $f$ is invariant for $T$, for any $n \in \N$ we get from the triangle inequality that
    \begin{equation}
        \|f\|_{L^2}^2 = \int_{\T^2}\; (f\circ T^n)\, f \; \le \left|\;\int_{\T^2} (\psi \circ T^n)\,\psi\; \right| + (3\epsilon^2 + 2\epsilon)\|f\|_{L^2}^2. 
    \end{equation}
    By \eqref{eq:ergodicgoal}, we can choose $n$ such that 
    $$ \|f\|_{L^2}^2 \le (1-\delta)(1+\epsilon)^2 \|f\|_{L^2}^2 + (3\epsilon^2 + 2\epsilon)\|f\|_{L^2}^2. $$
    Taking $\epsilon$ small enough so that $(1-\delta)(1+\epsilon)^2 + 3\epsilon^2 + 2\epsilon < 1$ yields a contradiction.

\end{proof}

It is now a fairly routine argument to upgrade to exponential mixing in continuous time.

 \begin{proof}[Proof of Theorem~\ref{thrm:mix}]
 Fix $f,g \in C^1(\T^2)$, which we may assume without loss of generality are both mean zero. Choose $t > 0$ and let $\lfloor t \rfloor$ denote the first integer less than or equal to $t$. By Lemma~\ref{thrm:Tmix}, there exist $C, c > 0$ such that 
     \begin{equation}
     \left| \;\int_{\T^2} (f\circ \phi_t^{-1})\,g \;\right| = \left|\; \int_{\T^2} (f \circ T^{\lfloor t \rfloor})\,(g \circ \phi_{t-\lfloor t \rfloor})\;\right| \le C e^{-c\lfloor t \rfloor} \|f\|_{C^1} \|g \circ \phi_{t-\lfloor t \rfloor}\|_{C^1}.
     \end{equation}
     Since $u_\alpha$ is uniformly Lipschitz, there exists a constant $C_\alpha > 0$ such that $\|g \circ \phi_{t-\lfloor t \rfloor}\|_{C^1} \le C_\alpha \|g\|_{C^1}.$ It follows that 
     \begin{equation}
     \left|\; \int_{\T^2} (f \circ \phi_t^{-1})\,g \;\right| \le (C C_\alpha e^{c}) e^{-c t} \|f\|_{C^1}\|g\|_{C^1},
     \end{equation}
     completing the proof. 
\end{proof}

	\section{Dynamics estimates} \label{sec:dynamics}

In this section, we obtain the main dynamics estimates necessary for the proof of Lemma~\ref{lem:weakmix}. In Section~\ref{sec:hyperbolic} we record precisely the basic facts concerning the uniform hyperbolicity of $T$. Then, in Section~\ref{sec:complexity} we prove the complexity lemma, which describes how the stretching of unstable curves under the iterates of $T$ dominates the cutting across singularity curves.
	
	\subsection{Uniform hyperbolicity} \label{sec:hyperbolic}

 Define the mappings $T_1$ and $T_2$ by
	\begin{equation} \label{eq:T1def}
	T_1(x,y) = 
	\begin{pmatrix}
	x \\
	y + \alpha |x-1/2|
	\end{pmatrix} \mod 1
	\quad \text{and} \quad 
	T_2(x,y) = 
	\begin{pmatrix}
	x+ \alpha|y-1/2| \\ y
	\end{pmatrix} \mod 1,
	\end{equation}
	and observe that $T_2 \circ T_1 =T$. Assuming $\alpha \ge 2$ is an even integer and writing $z = (x,y)$, it is easy to see that 
	\begin{equation} \label{eq:T1}
	T_1(z) = 
	\begin{cases}
	\begin{pmatrix}
	1 & 0 \\ 
	\alpha & 1
	\end{pmatrix} 
	z \mod 1 & x \ge 1/2 \\ 
	\begin{pmatrix} 1 & 0 \\ 
	-\alpha & 1 \end{pmatrix}z \mod 1 & x < 1/2,
	\end{cases}
	\end{equation} 
	and similarly
	\begin{equation}\label{eq:T2}
	T_2(z) = 
	\begin{cases}
	\begin{pmatrix}
	1 & \alpha \\ 
	0 & 1
	\end{pmatrix} 
	z \mod 1 & y \ge 1/2 \\ 
	\begin{pmatrix} 1 & -\alpha \\ 
	0 & 1 \end{pmatrix}z \mod 1 & y < 1/2.
	\end{cases}
	\end{equation} 
	Let 
	\begin{equation}
	\mathcal{A} = \left\{ \begin{pmatrix} 
	1 + \alpha^2 & \alpha \\ 
	\alpha & 1
	\end{pmatrix},
	\begin{pmatrix} 
	1 - \alpha^2 & \alpha \\ 
	-\alpha & 1
	\end{pmatrix},
	\begin{pmatrix} 
	1 - \alpha^2 & -\alpha \\ 
	\alpha & 1
	\end{pmatrix},
	\begin{pmatrix} 
	1 + \alpha^2 & -\alpha \\ 
	-\alpha & 1
	\end{pmatrix}
	\right\}
	\end{equation}
	denote the collection of possible products $A B$, where $A$ is one of the matrices in \eqref{eq:T1} and $B$ is one of the matrices in \eqref{eq:T2}. The map $T$ is smooth away from the singularity set 
	\begin{equation}\label{eq:singset}
	\mathcal{S}^+ = \left(\{x = 0\} \cup  \{x = 1/2\}\right) \cup T_1^{-1}(\{y = 1/2\} \cup \{y=0\}), 
	\end{equation}
	which partitions $\T^2\setminus \mathcal{S}^+$ into finitely many open, connected components such that on each one $Tz = Az \mod 1$ for some $A \in \mathcal{A}$. One can check by direct computation that there exists $C \ge 1$ such that for any $\alpha \ge 4$, every matrix $A \in \mathcal{A}$ has eigenvalues $\lambda_u = c_\alpha$, $\lambda_s = 1/c_\alpha$ for some $c_\alpha \in \R$ with $|c_\alpha| \ge \alpha^2/4$ and associated normalized eigenvectors 
	$$e_u = \begin{pmatrix} \cos \theta_u \\ \sin \theta_u\end{pmatrix}, \quad e_s = \begin{pmatrix} \cos \theta_s \\ \sin \theta_s \end{pmatrix}$$ 
	that satisfy $|\tan(\theta_u)| \le C\alpha^{-1}$ and $|\tan(\theta_s)| \ge C^{-1}\alpha$. As mentioned in the introduction, this implies that $T$ is uniformly hyperbolic for $\alpha$ large. We now describe this uniform hyperbolicity more precisely. Let $\mathcal{S}^{-} = T(\mathcal{S}^+)$ denote the singularity set for $T^{-1}$ and for $\delta_1 \in (0,1)$ let $C_u$ and $C_s$ be the cones defined in Section~\ref{sec:UniformHyperbolicity}. Then, for a suitable choice of $\delta_1$ and all $\alpha$ sufficiently large we have \[(\grad T)(z)C_{u} \subset C_{u}\qquad \text{and} \qquad (\grad T^{-1})(z)C_{s} \subset C_{s},\] where the two inclusions hold for all $z \in \T^2 \setminus \mathcal{S}^+$ 
	and all $z \in \T^2 \setminus \mathcal{S}^{-}$ respectively. Moreover, we have that  
	\begin{equation} \label{eq:hyperbolicity}
	\inf_{v \in C_{u}} \inf_{z \in \T^2\setminus \mathcal{S}^{+}} \frac{\|(\grad T)(z) v\|}{\|v\|} \ge \delta_1 \alpha^2, \quad \inf_{v \in C_{s}} \inf_{z \in \T^2\setminus \mathcal{S}^{-}} \frac{\|(\grad T^{-1})(z) v\|}{\|v\|} \ge \delta_1 \alpha^2.
	\end{equation}
	
	Since the uniform hyperbolicity of $T$ was a simple consequence of the stable and unstable eigenvectors being approximately aligned everywhere in space, it is clear that the hyperbolic properties of $T$ extend for free to the randomly kicked maps $T_\xi^n$. For $z_0 \in \R^2$, let $\mathcal{S}^+_{z_0} = \mathcal{S}^+ - \pi(z_0)$, where recall that $\pi:\R^2 \to \T^2$ is the natural projection. Then, for $\xi \in \Omega$ and $n \in \N$ define 
	\begin{equation}
	\mathcal{S}^{+,n}_\xi = \bigcup_{k=1}^{n} T_{\xi}^{-(k-1)}(\mathcal{S}^+_{\xi_k}), \quad \mathcal{S}_\xi^{-,n} = T_\xi^n(\mathcal{S}_\xi^{+,n}).
	\end{equation}
	With these definitions, $\mathcal{S}_\xi^{+,n}$ and $\mathcal{S}_\xi^{-,n}$ denote the singularity sets of $T_\xi^n$ and $T_{\xi}^{-n}$, respectively. The following lemma summarizes the hyperbolic properties of $T_\xi^n$ and $T_\xi^{-n}$ away from their singularity sets.
	
	\begin{lemma} \label{lem:hyperbolic}
		Let $\delta_1 \in (0,1)$ and $\alpha \gg 1$ be as above. Fix $\xi \in \Omega$ and $n \in \N$. For any $z \in \T^2\setminus \mathcal{S}_\xi^{+,n}$, $(\grad T^n_\xi)(z) \in \R^{2\times 2}$ is hyperbolic with normalized eigenvectors $e_u \in C_u$, $e_s \in C_s$ and associated eigenvalues $\lambda_u$, $\lambda_s$ which satisfy 
		$$|\lambda_u| \ge (\delta_1 \alpha^2)^n, \quad |\lambda_s| \le (\delta_1 \alpha^2)^{-n}.$$
		Moreover, there holds
		\begin{equation} \label{eq:hyperbolicityshifted}
		\inf_{v \in C_{u}} \inf_{z \in \T^2\setminus \mathcal{S}_\xi^{+,n}} \frac{\|( \grad T^n_\xi)(z) v\|}{\|v\|} \ge (\delta_1 \alpha^2)^n, \quad \inf_{v \in C_{s}} \inf_{z \in \T^2\setminus \mathcal{S}_\xi^{-,n}} \frac{\|( \grad T_\xi^{-n})(z) v\|}{\|v\|} \ge (\delta_1 \alpha^2)^n.
		\end{equation}
	\end{lemma}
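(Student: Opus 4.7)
The plan is to reduce the hyperbolicity of $T^n_\xi$ to that of an arbitrary length-$n$ product of matrices drawn from $\mathcal{A}$ and then apply the single-step cone and expansion bounds iteratively. The key observation, already emphasized in Section~\ref{sec:UniformHyperbolicity}, is that the kick $T_v(z) = T(z+\pi(v))$ merely translates the domain, so $\grad T_v(z) = \grad T(z + \pi(v))$ wherever defined. By the chain rule, for any $z \notin \mathcal{S}_\xi^{+,n}$ one has
\begin{equation*}
\grad T^n_\xi(z) = \grad T\bigl(T_\xi^{n-1}(z) + \pi(\xi_n)\bigr)\cdots \grad T\bigl(z + \pi(\xi_1)\bigr),
\end{equation*}
and by the very definition of $\mathcal{S}_\xi^{+,n}$ each evaluation point lies outside $\mathcal{S}^+$, so each factor is one of the four matrices in $\mathcal{A}$.

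Once this representation is in hand, the expansion estimate in the first part of \eqref{eq:hyperbolicityshifted} follows by a one-line induction from the single-step bounds $A\,C_u \subset C_u$ and $\|Av\|\ge \delta_1\alpha^2\,\|v\|$ for $A\in\mathcal{A}$ and $v\in C_u$, both of which are the content of the cone discussion preceding the statement of the lemma. The analogous estimate in $C_s$ for $\grad T_\xi^{-n}(z)$ is proved identically, representing $\grad T_\xi^{-n}(z)$ away from $\mathcal{S}_\xi^{-,n}$ as the corresponding length-$n$ product of inverses and using the stable cone invariance of $\mathcal{A}^{-1}$ together with the symmetric one-step expansion bound.

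For the spectral statement I would set $M = \grad T^n_\xi(z)$ and note that the invariance $M\,C_u \subset C_u$ induces a continuous self-map on the image of $C_u$ in $\mathbb{RP}^1$, which is an arc; by the intermediate value theorem this map has a fixed point, yielding a real eigenvector $e_u \in C_u$. Its eigenvalue then satisfies $|\lambda_u| = \|Me_u\|/\|e_u\| \ge (\delta_1\alpha^2)^n$ by the expansion bound just established. Since every element of $\mathcal{A}$ has determinant $1$, so does $M$, forcing $|\lambda_s| = 1/|\lambda_u| \le (\delta_1\alpha^2)^{-n}$; applying the identical projective argument to $M^{-1}$ on $C_s$ locates the second normalized eigenvector $e_s \in C_s$.

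There is essentially no analytic obstacle here: the whole proof is linear-algebraic and uses the noise sequence $\xi$ only through its role in defining $\mathcal{S}_\xi^{+,n}$. The only point requiring care is bookkeeping, namely confirming that $\mathcal{S}_\xi^{+,n}$ as defined captures exactly the set of $z$ where the chain-rule representation above breaks down, so that on its complement every factor in the product genuinely belongs to $\mathcal{A}$. Once that is checked, the lemma is a true corollary of the deterministic hyperbolicity summarized in Section~\ref{sec:hyperbolic}.
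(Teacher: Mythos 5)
Your proof is correct and follows exactly the route the paper indicates: the paper presents Lemma~\ref{lem:hyperbolic} without an explicit proof, treating it as a summary of the one-step cone invariance and expansion bounds for $\mathcal{A}$ recorded just above it (and noting that the kicks only translate the domain). Your chain-rule factorization $\grad T^n_\xi(z) = \prod_{k=n}^{1}\grad T(T_\xi^{k-1}(z)+\pi(\xi_k))$, the verification that $z\notin\mathcal{S}_\xi^{+,n}$ places each evaluation point off $\mathcal{S}^+$, the one-line induction for \eqref{eq:hyperbolicityshifted}, and the $\mathbb{RP}^1$ fixed-point argument combined with $\det M = 1$ for the spectral claim are precisely the details the authors leave to the reader.
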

	
	\begin{remark}
	A key aspect of the preceding lemma is that all of the relevant bounds are \emph{independent} of $\xi\in\Omega.$
	\end{remark}
	
	\subsection{Complexity lemma} \label{sec:complexity}
		
Recall from Section~\ref{sec:complexityintro} that $\mathcal{W}_u$ denotes the collection of line segments on $\T^2$ that are tangent to some $v \in C_u$. Since $T$ is piecewise linear, the strict invariance of the unstable cone implies that if $W \in \mathcal{W}_u$, then for any $\xi \in \Omega$ we can write $T_\xi(W)$ as a finite, disjoint union of elements in $\mathcal{W}_u$. The main result of this section is the complexity bound of Lemma~\ref{lem:informalcomplexity} from the introduction, restated precisely below. In what follows, $|\gamma|$ denotes the length of a piecewise smooth curve $\gamma$ on $\T^2$. 
	
	\begin{lemma} \label{lem:complexity}
		There is a constant $C \ge 1$ so that if $\alpha$ is sufficiently large, then for every $W \in \mathcal{W}_u$ with $|W| \le 1/2$, $\xi \in \Omega$, and $n \ge 1+ \log(1/|W|) + \log(8\alpha)$ there is a finite collection $\{W_i\}_{i \in I} \subseteq \mathcal{W}_u$ such that 
		$$ T_\xi^n(W) = \bigcup_{i \in I} W_i $$ 
		and
		\begin{equation}\label{eq:longdominate} \sum_{|W_i| \ge 5/4} |T_\xi^{-n}W_i| \ge \left(1-\frac{C}{\alpha}\right)|W|. 
		\end{equation}
		Moreover, $W_{i_1}$ and $W_{i_2}$ are disjoint when $i_1 \neq i_2$, and for each $i \in I$ the interior of $W_i$ with respect to the subspace topology of $T_\xi^{n}(W)$ is contained in $\T^2 \setminus \mathcal{S}_\xi^{-,n}$.
	\end{lemma}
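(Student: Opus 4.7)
The plan is to combine the uniform hyperbolicity from Lemma~\ref{lem:hyperbolic} with a careful description of the geometry of $\mathcal{S}^+$ to show that short pieces of $T^n_\xi(W)$ arise only as a controlled boundary effect. The first input is the following geometric fact, established by direct computation from \eqref{eq:T1}--\eqref{eq:T2}: $\mathcal{S}^+$ consists of the two vertical lines $\{x=0\}\cup\{x=1/2\}$ together with $T_1^{-1}(\{y=0\}\cup\{y=1/2\})$, which is a union of approximately $2\alpha$ line segments of slope $\pm\alpha$, each with $x$-extent $1/\alpha$ and with consecutive segments $x$-separated by at least $1/(2\alpha)$. Translation by $\pi(\eta)$ preserves this structure for any $\eta\in\R^2$, so it holds for every $\mathcal{S}^+_\eta$. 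Since an unstable curve $V\in\mathcal{W}_u$ has slope at most $1/(\alpha\delta_1)$ whereas the singularity curves have slope at least $\alpha$, the two families meet transversely and consecutive intersections along $V$ are separated by at least $1/(2\alpha)$ in arc length.

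This yields a one-step estimate: for any $\eta\in\R^2$ and any single unstable piece $V\in\mathcal{W}_u$, every interior maximal smooth sub-piece of $T_\eta(V)$---one not adjacent to an endpoint of $T_\eta(V)$---has length at least $(\delta_1\alpha^2)\cdot(1/(2\alpha))=\delta_1\alpha/2\ge 5/4$ for $\alpha$ sufficiently large, while the at most two pieces of $T_\eta(V)$ adjacent to its endpoints can be short, with combined preimage length at most $1/\alpha$ in $V$. I would iterate this in two phases. In the \textbf{growth phase}, starting from the original $W$ of length at most $1/2$, each iteration expands the curve (no cuts yet, or cuts producing a single long piece) by a factor of at least $\delta_1\alpha^2$; after roughly $\log(1/|W|)/\log(\delta_1\alpha^2)$ iterations the image has length of order one and the cutting regime begins. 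In the \textbf{steady-state phase}, at each step $n+1$ every piece $W_j$ of $T^n_\xi(W)$ contributes at most two new short sub-pieces in $T^{n+1}_\xi(W)$, with combined preimage length in $W$ at most $1/(\alpha\lambda_j\lambda'_j)$, where $\lambda_j\ge(\delta_1\alpha^2)^n$ is the cumulative expansion on the corresponding cell and $\lambda'_j\ge\delta_1\alpha^2$ is the one-step expansion. Using $|I_j|=|W_j|/\lambda_j$ together with the long-piece lower bound $|W_j|\ge 5/4$ gives $\sum_{j\text{ long}}1/\lambda_j\le(4/5)|W|$, so the newly-short preimage mass created at step $n+1$ from long input pieces is of order $|W|/\alpha^3$---well below the target $|W|/\alpha$. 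Short input pieces either expand into long pieces within $O(1)$ further iterations, at which point they are absorbed into the long-piece accounting, or they remain short only because $|W_j|$ is so tiny that the associated $|I_j|$ is negligible.

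\textbf{The main obstacle} is the careful bookkeeping needed to control how short pieces propagate across iterations, especially the very short ``tail'' pieces that persist because they have not yet expanded past the $5/4$ threshold. The three ingredients that make the argument work are: (i) the one-step boundary contribution of order $|W|/\alpha^3$, well below the target even after summing over polynomially many iterations; (ii) the uniform bound $\sum_{j\text{ long}}1/\lambda_j\le C|W|$ available at every step, which prevents the boundary term from compounding geometrically; and (iii) the lower bound $n\ge 1+\log(1/|W|)+\log(8\alpha)$, which allows enough steps for the initial growth phase to complete before the steady-state accounting begins. Combining these ingredients with the one-step geometric estimate yields $\sum_{|W_i|\ge 5/4}|T^{-n}_\xi(W_i)|\ge(1-C/\alpha)|W|$, together with the disjointness and interior-smoothness statements which follow from the piecewise linearity of $T$ and the definition of the partition by $\mathcal{S}^{-,n}_\xi$.
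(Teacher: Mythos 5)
Your broad plan—track the preimage mass carried by ``short'' versus ``long'' pieces across iterations and argue that the short fraction is $O(1/\alpha)$—is the same strategy the paper uses with its generations $\mathcal{G}_n(\xi,W)$, long/short classes $L_n,S_n$, and ``always-short'' sets $I_n$. However, your key one-step geometric claim is false, and it is the single fact the paper is careful \emph{not} to assume. You assert that consecutive intersections of a segment in $\mathcal{W}_u$ with the singularity set are separated by at least $1/(2\alpha)$ in arc length, so that every interior piece of $T_\eta(V)$ has length at least $\delta_1\alpha^2\cdot(1/(2\alpha))\ge 5/4$. This breaks down near the self-intersections of $\mathcal{S}^+$: the vertical line $\{x=1/2\}$ and the two branches of $T_1^{-1}(\{y=c\})$ ($c\in\{0,1/2\}$) all pass through $(1/2,c)$ (and likewise $(0,c)$). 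A nearly horizontal segment passing at height $y$ just below $c \bmod 1$ crosses these three curves within a window of width $O(\epsilon/\alpha)$ with $\epsilon$ arbitrarily small, producing \emph{interior} pieces of arbitrarily short preimage. The paper flags exactly this (``the maximal number [of] singularity curves that intersect at a point is three'') and, in place of your one-step estimate, uses only the robust counting fact that a segment of length $\le\alpha^{-1}/4$ meets at most four connected components of $\mathcal{M}^+_\eta$. It is the competition between that $4$ and the expansion rate $\delta_1\alpha^2$—not any guaranteed separation of cuts—that drives Lemma~\ref{lem:complexity1}, giving the exponential decay $|T_\xi^{-n}I_n(\xi,W)|\le(4/(\delta_1\alpha^2))^n|W|$ of ``always-short'' mass.

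The bookkeeping then also does not close as written. Because short interior pieces exist, ``at most two new short sub-pieces per $W_j$'' is an undercount; and in your growth phase, a small initial $W$ can be cut immediately (possibly into several tiny pieces near a triple point), so ``no cuts yet, or cuts producing a single long piece'' is unjustified. Your exponent is also off: with two boundary pieces of preimage length $\lesssim 1/\alpha$ in $W_j$, the newly short preimage mass is $\lesssim (1/\alpha)\sum_{j\ \mathrm{long}}1/\lambda_j=O(|W|/\alpha)$, not $O(|W|/\alpha^3)$; the extra factor $\lambda_j'$ should not appear since the preimage length is taken back to $W$, not to $T_\xi^{n+1}(W)$. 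Most importantly, the hand-wave ``short pieces either become long within $O(1)$ steps or are negligible'' is precisely where the paper needs a theorem: the proof of Lemma~\ref{lem:complexity2} decomposes $S_n(\xi,W)$ by the most recent long ancestor $V_i\in L_{n-k}(\xi,W)$ and applies Lemma~\ref{lem:complexity1} inside each group to obtain a summand $\alpha^{-3k/2}$, whose geometric sum is $O(1/\alpha)$. Without the exponential decay of ``always-short'' mass, a per-step boundary contribution of size $O(|W|/\alpha)$ only gives $O(n|W|/\alpha)$ after $n$ steps, which is not enough. Fixing your proof would essentially require importing both the counting bound and the $I_n$-decay lemma, at which point you are reproducing the paper's argument.
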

	
	\begin{remark} \label{rem:longinterpret}
	The lower bound in \eqref{eq:longdominate} is the precise quantification of the fact that the line segments in $T_\xi^n(W)$ which span across the entire torus dominate for large $\alpha$. The complexity estimate in this form says that the fraction of the initial segment $W$ that is mapped to long lines is very close to one. Phrasing \eqref{eq:longdominate} in this way is crucial to the area formula argument of Section~\ref{sec:area}.
	\end{remark}
	
	\begin{remark} \label{rem:stableanalogue}
		Let $\mathcal{W}_s$ be defined in the same way as $\mathcal{W}_u$, but with $C_u$ replaced by the stable cone $C_s$. Then, a decomposition of $T^{-n}_\xi(W)$ for $W \in \mathcal{W}_s$ with $|W| \le 1/2$ exactly analogous to the one in Lemma~\ref{lem:complexity} holds.
	\end{remark}
	
As mentioned earlier, our proof of Lemma~\ref{lem:complexity} follows closely ideas from \cite{Demers20, BaladiDemers20}. We utilize the fact that $T$ is piecewise linear and the structure of the singularity set $\mathcal{S}^+$ to obtain the complexity estimate in the form of \eqref{eq:longdominate}, which is slightly different from the corresponding complexity lemmas in \cite{Demers20, BaladiDemers20}.  
	\subsubsection{Structure of the singularity sets}
We begin by noting that from \eqref{eq:singset}, we can deduce the following facts about $\mathcal{S}^+$, which is plotted in Figure \ref{fig:Sing}:
\begin{itemize}
    \item $\mathcal{S}^+$ is a union of $\{x=0\}, \{x=\frac{1}{2}\}$ and finitely many lines of slope $\pm \frac{1}{\alpha}$ that partition $\mathbb{T}^2\setminus\mathcal{S}^+$ into finitely many connected components $\mathcal{M}^+=\{M_i^+\}$. 
    \item The lines with slope $\frac{1}{\alpha}$ are contained in the region $0\leq x\leq \frac{1}{2}$ and are horizontally spaced by $\frac{1}{2\alpha}.$ The lines with slope $-\frac{1}{\alpha}$ are similarly in $\frac{1}{2}\leq x\leq 1.$
    \item The maximal number singularity curves that intersect at a point is three. 
    \item If $W \in \mathcal{W}_u$ with $|W| \le \alpha^{-1}/4$ and $\alpha$ is sufficiently large, then $W$ can have nonempty intersection with at most four distinct elements of $\mathcal{M}^+$.
\end{itemize}
Recall that for $z_0 \in \R^2$, $\mathcal{S}_{z_0}^+$ denotes the singularity of $T_{z_0}$. As the set $\mathcal{S}_{z_0}^+$ is obtained simply by translating $\mathcal{S}^+$, it has similar properties. In particular, the last bullet remains true for $\mathcal{S}^+_{z_0}$ for any $z_0 \in \R^2$ with $\mathcal{M}^+$ replaced by its translate $\mathcal{M}^+_{z_0}=\{M^+_{z_0,i}\}$.

	\begin{figure}[h]
		\centering
		\includegraphics[height=4in,width=7in]{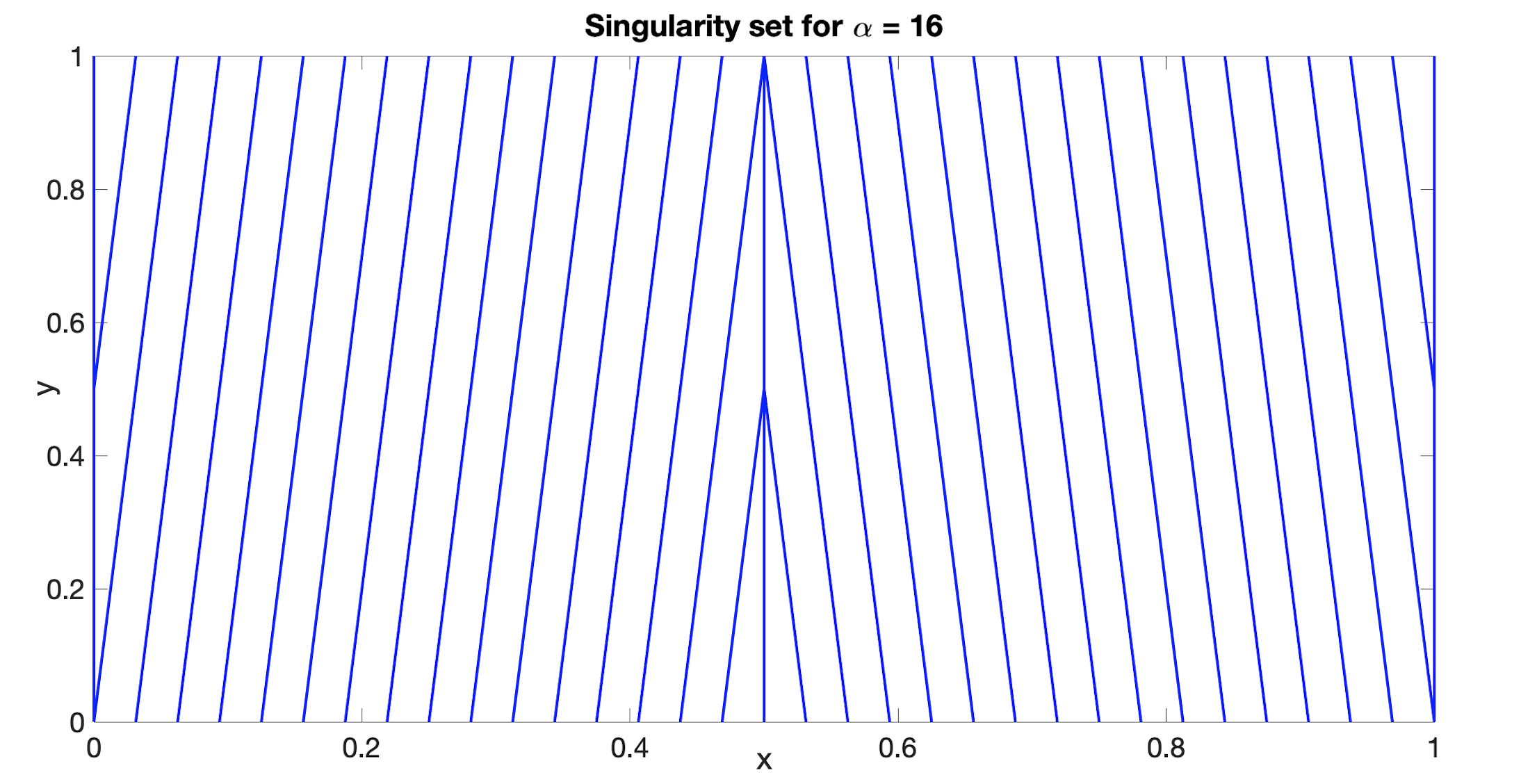}
		\vspace{-1 cm}
		\caption{Plot of the singularity set $\mathcal{S}^+$ on $\T^2$ in the case where $\alpha = 16$.}
		\label{fig:Sing}
	\end{figure}

\subsubsection{Generations of lines in $\mathcal{W}_u$}    	Given $\xi = (\xi_1,\xi_2,\ldots) \in \Omega$ and $W \in \mathcal{W}_u$ with $|W| \le \alpha^{-1}/4$, we define subsequent generations $\mathcal{G}_n(\xi,W)$ of lines in $\mathcal{W}_u$ with length less than or equal to $\alpha^{-1}/4$ obtained by mapping iteratively through $T_{\xi_1}$, $T_{\xi_2}$, and so on. First, let $\{M_{\xi_1,i_j}^+\}_{j \in J} \subseteq \mathcal{M}^+_{\xi_1}$ denote the distinct elements of $\mathcal{M}_{\xi_1}^+$ with which $W$ has nonempty intersection and note that as described above $\# J \le 4$. Let $\tilde{W}_j = T_{\xi_1}(W \cap \overline{M}_{\xi_1,i_j}^+)$ and then obtain $W_j \in \mathcal{W}_u$ from $\tilde{W}_j$ by modifying just the endpoints in such a way that $\{W_j\}_{j \in J} \subseteq \mathcal{W}_u$ consists of disjoint segments. We then define $\mathcal{G}(\xi_1,W) \subseteq \mathcal{W}_u$ as follows. If $|W_j| \le \alpha^{-1}/4$ then we declare $W_j \in \mathcal{G}(\xi_1,W)$. If $|W_j| > \alpha^{-1}/4$, then we subdivide $W_j$ into disjoint elements $\{W_{ij}\} \subseteq \mathcal{W}_{u}$ with $|W_{ij}| \in (\alpha^{-1}/8, \alpha^{-1}/4)$ and declare $W_{ij} \in \mathcal{G}(\xi_1,W)$ for each $i$. 
 Set $\mathcal{G}_1(\xi,W) = \mathcal{G}(\xi_1,W)$ and then for $n \ge 2$ define 
	\begin{equation} \label{eq:Gen1def}
	\mathcal{G}_n(\xi,W) = \bigcup_{V \in \mathcal{G}_{n-1}(\xi,W)} \mathcal{G}(\xi_n,V).
	\end{equation}
	It is clear that for every $n \in \N$ there holds
	\begin{equation}
	T_\xi^n(W) = \bigcup_{W_i \in \mathcal{G}_n(\xi,W)} W_i
	\end{equation}
	and that two line segments $W_{i_1}, W_{i_2} \in \mathcal{G}_n(\xi, W)$ are disjoint when $i_1 \neq i_2$. Moreover, note that by construction the interior of each $V \in \mathcal{G}_n(\xi, W)$ with respect to the subspace topology of $T_\xi^n(W)$ is contained in $\T^2 \setminus \mathcal{S}_\xi^{-,n}$. In other words, the inverse map $T^{-n}_\xi$ is smooth on a neighborhood of the interior of $V$. Similarly,  $T^{-k}_{\theta^{n-k}\xi}$ is smooth on a neighborhood of the interior of $V$ for every $k = 1, 2, \ldots, n$, where recall that $\theta:\Omega \to \Omega$ denotes the shift map.
	
	For $W \in \mathcal{W}_u$ and $\mathcal{G}_n(\xi,W)$ constructed as above, let $L_n(\xi, W)$ denote the elements $W_i \in \mathcal{G}_n(\xi,W)$ with $|W_i| \ge \alpha^{-1}/8$ and let $S_n(\xi, W) = \mathcal{G}_n(\xi,W) \setminus L_n(\xi, W)$. Here the notation is ``L'' for ``long'' and ``S'' for ``short.'' We define also $I_{n}(\xi, W)$ to be the elements $W_i \in S_n(\xi, W)$ such that for every $k = 1, 2, \ldots, n-1$ there exists $V_k \in S_{n-k}(\xi, W)$ such that $W_i \in S_k(\theta^{n-k}\xi, V_k)$. Stated roughly in simple terms, $I_n(\xi, W)$ is the collection of elements in $S_n(\xi, W)$
	that have always been contained in a short element. Lastly, for $\xi \in \Omega$, $k \in \N$, and a collection $E$ of disjoint elements of $\mathcal{W}_u$ we write 
	$$|E| = \sum_{W_i \in E}|W_i| \quad \text{and}\quad T^{\pm k}_\xi E = \{T_\xi^{\pm k}W:W\in E\}.$$ 

 \subsubsection{Proof of Lemma~\ref{lem:complexity}}
	The first step in the proof of Lemma~\ref{lem:complexity} is a lemma which says roughly that for an initial $W \in \mathcal{W}_u$ with $|W| \approx \alpha^{-1}$, the fraction of its initial length which corresponds to lines which never become long up to time $n$ decays exponentially in $n$.
	
	\begin{lemma} \label{lem:complexity1}
		Let $\epsilon \in (0,2)$. There exists $\alpha_0(\epsilon) \ge 0$ so that if $\alpha \ge \alpha_0$, then for every $\xi \in \Omega$, $W \in \mathcal{W}_u$ with $|W| \in(\alpha^{-1}/8,\alpha^{-1}/4]$, and $n \in \N$ there holds 
		\begin{equation}\label{eq:complexity1}
		|T_\xi^{-n} I_n(\xi, W)| \le \alpha^{-(2-\epsilon)n} |W|.
		\end{equation} 
	\end{lemma}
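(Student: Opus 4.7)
The plan is to control $\sigma_n := |T_\xi^{-n} I_n(\xi, W)|$ by combining a combinatorial bound on the cardinality $N_n := \#I_n(\xi, W)$ with a per-piece geometric bound on the preimage length. The core observation driving the estimate is that for a piece to remain in $I_{n+1}$, its parent must be cut into an exceptionally short sub-piece of length $O(\alpha^{-3})$, and this smallness, together with the $(\delta_1\alpha^2)^{-n}$ contraction of $T_\xi^{-n}$ on unstable curves, will dominate the at most $4^n$ multiplicative growth in the piece count.

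For the count, I would use that each $V \in I_n(\xi,W)$ satisfies $|V| < \alpha^{-1}/8 < \alpha^{-1}/4$, so by the last property in the structural description of the singularity set (applied to the translate $\mathcal{S}^+_{\xi_{n+1}}$), $V$ meets at most four elements of $\mathcal{M}^+_{\xi_{n+1}}$. Each $V$ therefore contributes at most four curves to generation $n+1$, and in particular at most four descendants to $I_{n+1}(\xi,W)$, giving $N_{n+1} \le 4 N_n$ and hence $N_n \le 4^n$.

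For the per-piece length, I would use that any cut $V_j = V \cap \overline{M}^+_{\xi_{n+1},i_j}$ whose image under $T_{\xi_{n+1}}$ lies in $I_{n+1}(\xi, W)$ must have image length strictly less than $\alpha^{-1}/8$; since $T_{\xi_{n+1}}$ expands unstable curves by at least $\delta_1 \alpha^2$ by Lemma~\ref{lem:hyperbolic}, this forces $|V_j| < 1/(8\delta_1\alpha^3)$. The $n$-step expansion bound from the same lemma then gives
\[
|T_\xi^{-n}(V_j)| \le (\delta_1\alpha^2)^{-n}|V_j| < \frac{1}{8\delta_1 \alpha^3 (\delta_1\alpha^2)^n}.
\]
Summing over the at most $4 N_n \le 4^{n+1}$ short-producing cuts (since $T_\xi^{-(n+1)}$ applied to a short descendant equals $T_\xi^{-n}$ applied to the corresponding cut) yields
\[
\sigma_{n+1} \le \frac{1}{2\delta_1 \alpha^3}\left(\frac{4}{\delta_1 \alpha^2}\right)^n.
\]

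Finally, I would use $|W| > \alpha^{-1}/8$ to absorb the $\alpha^{-3}$ prefactor into $|W|$ at the cost of an extra $\alpha^{-2}$; reindexing, the desired inequality $\sigma_n \le \alpha^{-(2-\epsilon)n}|W|$ reduces to requiring $(4/(\delta_1 \alpha^\epsilon))^n \lesssim 1$, which holds uniformly in $n \in \N$ provided $\alpha \ge \alpha_0(\epsilon) := (4/\delta_1)^{1/\epsilon}$. The main conceptual hurdle is recognizing the $\alpha^{-3}$ smallness scale of surviving cuts (rather than the naive $\alpha^{-1}$ coming from the short-piece threshold): it is precisely this extra factor, paired with the $(\delta_1\alpha^2)^{-n}$ contraction of $T_\xi^{-n}$, that overcomes the $4^n$ splitting growth and yields the claimed exponential decay of the fraction of $W$ that never stretches.
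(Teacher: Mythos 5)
Your proposal is correct and follows essentially the same route as the paper: the cardinality bound $\#I_n(\xi,W)\le 4^n$ from the four-piece cutting estimate, the uniform backward contraction $(\delta_1\alpha^2)^{-n}$ from Lemma~\ref{lem:hyperbolic} applied to each short piece, comparison with $|W|>\alpha^{-1}/8$, and the choice $\alpha^\epsilon \ge 4/\delta_1$. Your isolation of the last-step cut (the $\alpha^{-3}$ scale) is just a repackaging of the paper's direct bound $|T_\xi^{-n}V|\le(\delta_1\alpha^2)^{-n}(\alpha^{-1}/8)$ and yields the identical final estimate.
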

	
	\begin{proof}
		For any $|W| \le \alpha^{-1}/4$ we have $\# I_1(\xi,W) \le 4$. It is therefore easy to see by induction and the fact that \[I_{n}(\xi,W)=\bigcup_{V\in I_{n-1}(\xi,W)}I_{1}(\theta^{n-1}\xi,V)\] that 
		\begin{equation}  \label{eq:I_n1}
		\# I_n(\xi,W) \le 4^n
		\end{equation}
		for every $n \in \N$. On the other hand, by the hyperbolicity described in Lemma~\ref{lem:hyperbolic}, for any $V \in I_n(\xi,W) \subseteq S_n(\xi,W)$ we have that 
		\begin{equation} \label{eq:I_n2}
		|T_\xi^{-n}V| \le (\delta_1 \alpha^2)^{-n}|V| \le (\delta_1 \alpha^2)^{-n} (\alpha^{-1}/8) \le (\delta_1 \alpha^2)^{-n}|W|.
		\end{equation}
		Combining \eqref{eq:I_n1} and \eqref{eq:I_n2} yields
		$$\frac{|T_\xi^{-n}I_n(\xi, W)|}{|W|} \le \frac{4^n}{(\delta_1 \alpha^2)^n}.$$
	\end{proof}
	
Lemma~\ref{lem:complexity1} was a simple consequence of how the expansion rate  $\delta_1 \alpha^{2}$ dominates the number of singularity curves that a sufficiently small line segment can cross. We now use Lemma~\ref{lem:complexity1} applied at each iteration to obtain bounds on $|T_\xi^{-n} S_n(\xi, W)|$ and $|T_\xi^{-n} L_n(\xi, W)|$, which is ultimately what is required to prove Lemma~\ref{lem:complexity}. 
	\begin{lemma} \label{lem:complexity2}
		If $\alpha$ is sufficiently large, then for any $\xi \in \Omega$, $W \in \mathcal{W}_u$ with $|W| \le \alpha^{-1}/4$, and $n \ge  \log(1/|W|)$ there holds
		$$ |T_\xi^{-n}L_n(\xi,W)| \ge \left(1-\frac{1}{\alpha}\right)|W|.$$
	\end{lemma}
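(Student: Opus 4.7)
The plan is to decompose $S_n(\xi, W)$ according to the most recent generation at which each short element's ancestor was long, and then apply Lemma~\ref{lem:complexity1} separately to the ``always short'' family $I_n(\xi, W)$ and to the short descendants of each long ancestor. Concretely, I would write
\begin{equation}
S_n(\xi, W) \;=\; I_n(\xi, W) \;\sqcup\; \bigsqcup_{j=1}^{n-1} A_j,
\end{equation}
where $A_j$ is the set of $V \in S_n \setminus I_n$ whose last long ancestor is $V^{(j)} \in L_j(\xi, W)$, i.e.\ $V^{(k)} \in S_k$ for all $k > j$. It follows immediately from the definitions of the generations and of $I$ that every $V \in A_j$ belongs to $I_{n-j}(\theta^j \xi, V^{(j)})$.

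For each $U \in L_j(\xi, W)$, one has $|U| \in [\alpha^{-1}/8, \alpha^{-1}/4]$, so Lemma~\ref{lem:complexity1} (with $\epsilon$ fixed sufficiently small) gives
\begin{equation}
\bigl|T_{\theta^j \xi}^{-(n-j)} I_{n-j}(\theta^j \xi, U)\bigr| \;\le\; \alpha^{-(2-\epsilon)(n-j)} |U|.
\end{equation}
By construction of $\mathcal{G}_j$, the interior of $U$ lies in $\T^2 \setminus \mathcal{S}_\xi^{-,j}$, so $T_\xi^{-j}$ is affine on $U$ with a single contraction ratio equal to $|T_\xi^{-j} U|/|U|$. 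Pulling a subset of $U$ back through this affine map therefore yields
\begin{equation}
\bigl|T_\xi^{-n} I_{n-j}(\theta^j \xi, U)\bigr| \;\le\; \alpha^{-(2-\epsilon)(n-j)} |T_\xi^{-j} U|.
\end{equation}
Summing over $U \in L_j$ and using that the sets $T_\xi^{-j} U \subseteq W$ are disjoint gives $|T_\xi^{-n} A_j| \le \alpha^{-(2-\epsilon)(n-j)} |W|$. Summing the resulting geometric series over $j = 1, \dots, n-1$ and choosing $\alpha$ large enough so that $\sum_{k \ge 1} \alpha^{-(2-\epsilon)k} \le 1/(4\alpha)$ yields $\sum_{j=1}^{n-1} |T_\xi^{-n} A_j| \le |W|/(2\alpha)$.

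It remains to bound the $I_n(\xi, W)$ contribution. When $|W| \in (\alpha^{-1}/8, \alpha^{-1}/4]$ one may apply Lemma~\ref{lem:complexity1} directly, so the only delicate case is $|W| \le \alpha^{-1}/8$. Here I would revert to the proof of Lemma~\ref{lem:complexity1}: the bounds $\# I_n(\xi, W) \le 4^n$ and $|T_\xi^{-n} V| \le (\delta_1 \alpha^2)^{-n}\, \alpha^{-1}/8$ for each $V \in I_n$ give the \emph{absolute} estimate
\begin{equation}
\bigl|T_\xi^{-n} I_n(\xi, W)\bigr| \;\le\; \Bigl(\frac{4}{\delta_1 \alpha^2}\Bigr)^{n} \frac{\alpha^{-1}}{8}.
\end{equation}
The hypothesis $n \ge \log(1/|W|)$ forces $|W| \ge e^{-n}$, whence $|T_\xi^{-n} I_n(\xi, W)|/|W| \le (4e/(\delta_1 \alpha^2))^n \alpha^{-1}/8 \le 1/(2\alpha)$ for $\alpha$ sufficiently large. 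Combining with the $A_j$ estimate yields $|T_\xi^{-n} S_n(\xi, W)| \le |W|/\alpha$, which is equivalent to the stated inequality. The only real obstacle is precisely this ``small $|W|$'' case for the $I_n$ term, where Lemma~\ref{lem:complexity1} does not apply in its stated form; the absolute bound from its proof combined with the growth-time hypothesis $n \ge \log(1/|W|)$ closes the estimate.
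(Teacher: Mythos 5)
Your proof is correct and takes essentially the same route as the paper's. Both decompose $S_n(\xi,W)$ by last long ancestor, both use the piecewise-affine structure of $T_\xi^{-j}$ on a long ancestor to reduce to Lemma~\ref{lem:complexity1}, both sum the resulting geometric series, and both handle the $I_n(\xi,W)$ piece by reverting to the raw $4^n(\delta_1\alpha^2)^{-n}\alpha^{-1}/8$ bound from the proof of Lemma~\ref{lem:complexity1} together with $|W|\ge e^{-n}$; the only cosmetic difference is that the paper organizes the $A_j$ estimate via the mediant inequality $\frac{\sum a_i}{\sum b_i}\le\max a_i/b_i$, while you sum over $U\in L_j$ directly using disjointness of the $T_\xi^{-j}U\subseteq W$, and these two bookkeeping devices are equivalent.
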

	
	\begin{proof}
		Fix $\xi \in \Omega$ and $W \in \mathcal{W}_u$. Since 
		$$|T_\xi^{-n}L_n(\xi,W)| + |T_\xi^{-n}S_n(\xi, W)| = |W|,  $$
		the lemma is equivalent to proving that

\begin{equation}\label{eq:shortratio}
		|T_\xi^{-n}S_n(\xi,W)| \le \frac{1}{\alpha} |W|.
		\end{equation}
		To estimate the left-hand side of \eqref{eq:shortratio}, following \cite{Demers20, BaladiDemers20}, we group the elements of $S_n(\xi,W)$ in terms of their most recent long ancestor. In particular, given any $W_i \in S_n(\xi, W)$, either $W_i \in I_n(\xi, W)$ or there is a unique $k \in \{1,\ldots,n-1\}$ such that $W_i \in I_{k}(\theta^{n-k}\xi, V)$ for some $V \in L_{n-k}(\xi, W)$. Thus, we can write 
		\begin{equation} \label{eq:complexity2}
		\frac{|T_\xi^{-n}S_n(\xi,W)|}{|W|} = \frac{|T_\xi^{-n}I_n(\xi,W)|}{|W|} + \frac{1}{|W|}\sum_{k=1}^{n-1} \sum_{V_i \in L_{n-k}(\xi,W)} |T_\xi^{-n}I_k(\theta^{n-k}\xi,V_i)|. 
		\end{equation}
		For the first term, observe that by the the proof of Lemma~\ref{lem:complexity} we have 
		$$\frac{|T_\xi^{-n}I_n(\xi,W)|}{|W|} \le \left(\frac{4}{\alpha^2 \delta_1}\right)^n \frac{1}{8\alpha |W|}.$$
		If $\alpha$ is large enough 
		then the choice of $n$ implies that 
		\begin{equation}
		\frac{|T_\xi^{-n}I_n(\xi,W)|}{|W|} \le \frac{1}{2\alpha}.
		\end{equation}
		For the second term in \eqref{eq:complexity2}, we first use the trivial fact that 
		$$|W| \ge \sum_{V_i \in L_{n-k}(\xi, W)} |T_\xi^{-(n-k)}V_i|$$
		for every $k$ to obtain 
		$$ \frac{1}{|W|}\sum_{k=1}^{n-1} \sum_{V_i \in L_{n-k}(\xi, W)} |T_\xi^{-n}I_k(\theta^{n-k}\xi,V_i)| \le \sum_{k=1}^{n-1} \frac{\sum_{V_i \in L_{n-k}(\xi,W)} |T_\xi^{-n}I_k(\theta^{n-k}\xi, V_i)|}{\sum_{V_i \in L_{n-k}(\xi, W)} |T_\xi^{-(n-k)}V_i|}.$$
		Using also the elementary inequality 
		$$ \frac{a_1 + a_2 + \ldots a_N}{b_1 + b_2 + \ldots b_N} \le \max_{1\le j \le N} \frac{a_j}{b_j} $$
		for two lists of positive numbers $\{a_i\}_{i=1}^N$ and $\{b_i\}_{i=1}^N$, we deduce
		\begin{equation}
		\frac{1}{|W|}\sum_{k=1}^{n-1} \sum_{V_i \in L_{n-k}(\xi, W)} |T_\xi^{-n}I_k(\theta^{n-k}\xi,V_i)| \le \sum_{k=1}^{n-1} \max_{V_i \in L_{n-k}(\xi, W)} \frac{|T_\xi^{-n}I_k(\theta^{n-k}\xi, V_i)|}{|T_\xi^{-(n-k)}V_i|}.
		\end{equation}
		We will now rewrite the sum above so that we are able to apply Lemma~\ref{lem:complexity1}. Fix $1 \le k \le n-1$ and $V_i \in L_{n-k}(\xi,W)$. Let $W_i \in I_k(\theta^{n-k}\xi,V_i)$ and observe that by the definition of $T^n_\xi$ and $\theta$ we have
		\begin{equation}
		T_\xi^{-n} W_i = T_\xi^{-(n-k)}(T^{-k}_{\theta^{n-k}\xi} W_i).
		\end{equation}
		Since $T_{\theta^{n-k}\xi}^{-k} W_i \subseteq V_i$ and $\grad T_\xi^{-(n-k)}$ is constant on $V_i$ it follows that 
		\begin{equation}
		\frac{|T_\xi^{-n} W_i|}{|T_\xi^{-(n-k)}V_i|} = \frac{|T_\xi^{-(n-k)}(T^{-k}_{\theta^{n-k}\xi} W_i)|}{|T_\xi^{-(n-k)}V_i|} = \frac{|T^{-k}_{\theta^{n-k}\xi}W_i|}{|V_i|}. 
		\end{equation}
		Summing this estimate over $W_i \in I_k(\theta^{n-k}\xi,V_i)$ and assuming that $\alpha$ is large enough so that Lemma~\ref{lem:complexity1} holds with $\epsilon = 1/2$, we obtain
		\begin{equation}
		\max_{V_i \in L_{n-k}(\xi,W)} \frac{|T_\xi^{-n}I_k(\theta^{n-k}\xi, V_i)|}{|T_\xi^{-(n-k)}V_i|} \le \max_{V_i \in L_{n-k}(\xi,W)}   \frac{|T_{\theta^{n-k}\xi}^{-k} I_k(\theta^{n-k}\xi,V_i)|}{|V_i|} \le \alpha^{-3(n-k)/2}.
		\end{equation}
		Combining our estimates thus far and assuming $\alpha$ is sufficiently large we have 
		\begin{equation}
		\frac{|T_\xi^{-n} S_n(\xi,W)|}{|W|} \le \frac{1}{2\alpha} + \sum_{k=1}^{n-1} \alpha^{-3(n-k)/2} \le \frac{1}{2\alpha}+ \frac{1}{\alpha^{3/2} - 1} \le \frac{1}{\alpha},
		\end{equation}
		which completes the proof. 
	\end{proof}
	
	We are now ready to give the proof of Lemma~\ref{lem:complexity}. Note that the difference between Lemma~\ref{lem:complexity} and Lemma~\ref{lem:complexity2} is that the ``long lines'' of Lemma \ref{lem:complexity} have length $5/4$ while those in Lemma~\ref{lem:complexity2} have length on the order of $\alpha^{-1}$. The remedy is just to modify the splitting given in the definition of the generations $\mathcal{G}_n$ at the last step. Indeed, the plan is to first apply Lemma~\ref{lem:complexity2} to obtain a set $\mathcal{G}_{n-1}(\xi,W)$ that consists primarily of lines with length approximately $\alpha^{-1}$, and then to consider the elements of $T_{\xi_n}(\mathcal{G}_{n-1}(\xi,W))$. The idea here is that since the expansion rate scales like $\alpha^2$ and $\alpha^2 \alpha^{-1} = \alpha \gg 1$, most line segments in $T_{\xi_n}(\mathcal{G}_{n-1}(\xi,W))$ necessarily span across $\T^2$. 
	
	\begin{proof}[Proof of Lemma~\ref{lem:complexity}]
		Fix $\xi \in \Omega$ and $W \in \mathcal{W}_u$. It is clear that by subdividing $W$ we may assume without loss of generality that $|W| \le \alpha^{-1}/4$.  
  For $z \in \R^2$ and $V \in \mathcal{W}_u$, let $\tilde{\mathcal{G}}(z,V)$ be defined in the same way as $\mathcal{G}(z,V)$ before but with the segments of length strictly greater than $5/2$ subdivided into segments with length strictly between $5/4$ and $5/2$. We define the collection $\tilde{\mathcal{G}}_n(\xi,W) \subseteq \mathcal{W}_u$ by 
		\begin{equation}
\tilde{\mathcal{G}}_n(\xi,W) = \bigcup_{W_i \in \mathcal{G}_{n-1}(\xi,W)} \tilde{\mathcal{G}}(\xi_n,W_i).
		\end{equation}
		Except for \eqref{eq:longdominate}, $\tilde{\mathcal{G}}_n(\xi,W)$ is easily seen by construction to satisfy all of the properties claimed in Lemma~\ref{lem:complexity}. It thus remains to prove, writing $\tilde{\mathcal{G}}_{n}(\xi,W) = \{W_i\}_{i \in I}$, that
		\begin{equation} \label{eq:longdominate2}
		\frac{1}{|W|}\sum_{|W_i| \ge 5/4}|T_\xi^{-n} W_i| \ge 1 - \frac{C}{\alpha}
		\end{equation}
		for some constant $C \ge 1$ that does not depend on $\alpha$ or $\xi$. First, since $n-1\geq \log \frac{1}{|W|}, $ by Lemma~\ref{lem:complexity2} we have 
		\begin{equation} 
		\frac{1}{|W|}\sum_{|W_i| \ge 5/4} |T_\xi^{-n}W_i| \ge \left(1-\frac{1}{\alpha}\right)\frac{1}{|T_\xi^{-(n-1)}L_{n-1}(\xi, W)|}\sum_{|W_i| \ge 5/4}|T_\xi^{-n}W_i|.
		\end{equation}
		To simplify notation, for $V_k \in L_{n-1}(\xi,W)$, define
		$$E_k = \{W_i \in \tilde{\mathcal{G}}_{n}(\xi,W): T_{\xi_n}^{-1} W_i \subseteq V_k, |W_i| \ge 5/4\}.$$ 
		Arguing similar to as in the proof of Lemma~\ref{lem:complexity2} we deduce 
		\begin{align*} \frac{1}{|T_\xi^{-(n-1)}L_{n-1}(\xi, W)|}\sum_{|W_i| \ge 5/4}|T_\xi^{-n}W_i| &\ge \frac{\sum_{V_k \in L_{n-1}(\xi,W)} \sum_{W_i \in E_k} |T_\xi^{-n}W_i|}{\sum_{V_k \in L_{n-1}(\xi,W)}|T_\xi^{-(n-1)}V_k|} \\ 
		& \ge \min_{V_k \in L_{n-1}(\xi,W)} \frac{\sum_{W_i \in E_k}|T_\xi^{-n}W_i|}{|T_\xi^{-(n-1)} V_k|}\\ 
		& = \min_{V_k \in L_{n-1}(\xi,W)} \frac{\sum_{W_i \in E_k}|T_{\xi_n}^{-1}W_i|}{|V_k|}.
		\end{align*}
		Using that $V_k \in L_{n-1}(\xi,W)$ can cross at most three singularity curves and the lower bound on the expansion rate given by Lemma~\ref{lem:hyperbolic},
		it is not hard to show that the construction of $\tilde{\mathcal{G}}(\xi_n,V_k)$ implies that 
		\begin{equation}
		\frac{\sum_{W_i \in E_k} |T_{\xi_n}^{-1}W_i|}{|V_k|} \ge \frac{|V_k| - \frac{15}{2\delta_1 \alpha^2}}{|V_k|} \ge 1 - \frac{60}{\delta_1 \alpha}.
		\end{equation}
		Combining the estimates above, we obtain 
		\begin{equation}
		\frac{1}{|W|} \sum_{|W_i|\ge 5/4} |T_\xi^{-n}W_i| \ge \left(1 - \frac{1}{\alpha}\right) \left(1 - \frac{60}{\delta_1 \alpha}\right),
		\end{equation}
		which completes the proof.
	\end{proof}
	
	\section{Proof of geometric mixing lemma} \label{sec:area}
	
	In this section we use the results of Section~\ref{sec:dynamics} to prove Lemma~\ref{lem:weakmix}. 
%
As discussed earlier, we make use of the area formula from geometric
               measure theory, which we recall below.
	\begin{lemma}[Area Formula] \label{lem:area}
		Let $M$ be an $m$-dimensional $C^1$ Riemannian manifold and
                let $\mathcal{H}^m$ be the m-dimensional Hausdorff
                measure on $M$ induced by the Riemannian metric. If
                $\mathcal{H}^m(M) < \infty$
                , $F:M \to M$ is a Lipschitz
               continuous map, and $\varphi: M \to \R$ is measurable, then 
               \begin{equation}
		\int_M J_F(z)\varphi(z)  \mathcal{H}^m(\dz) = \int_M \Bigg(\sum_{z \in F^{-1}(z')}\varphi(z)\Bigg)\mathcal{H}^m(\dee z'),
		\end{equation}
		where $J_F$ denotes the Jacobian of $F$ calculated
                using the Riemannian metric.
              \end{lemma}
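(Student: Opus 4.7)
The statement is the classical area formula from geometric measure theory, so my plan would be to outline the standard proof strategy and note that the result can be quoted directly from, e.g., Federer's \emph{Geometric Measure Theory} or Evans--Gariepy's \emph{Measure Theory and Fine Properties of Functions}. Since $M$ is compact (being of finite Hausdorff measure and a $C^1$ manifold) and since the statement is local in character, the first reduction is to cover $M$ with finitely many coordinate charts, use a partition of unity, and thereby reduce to the Euclidean statement: for a Lipschitz map $F:U \to \R^m$ with $U \subseteq \R^m$ open and $\varphi:U \to \R$ measurable, one has $\int_U J_F\,\varphi\,dx = \int_{\R^m}(\sum_{z \in F^{-1}(z')}\varphi(z))\,dz'$, where $J_F = |\det DF|$. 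The Riemannian Jacobian and the Euclidean one differ only by the metric-density factors from the charts, which cancel properly between the two sides.

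Next I would invoke Rademacher's theorem, which guarantees that the Lipschitz map $F$ is differentiable a.e.\ on $U$. Split $U$ into the \emph{critical} set $Z = \{J_F = 0\}$ and its complement. On $Z$, both sides vanish: the left side trivially, and the right side by the Lipschitz version of Sard's theorem (a classical consequence of the area formula proved independently), which ensures $\mathcal{H}^m(F(Z)) = 0$. On $U \setminus Z$, the plan is to partition into countably many Borel pieces $E_k$ on which $F$ is bi-Lipschitz onto its image; this can be achieved by a standard covering argument based on the fact that at every point where $DF$ is invertible, $F$ is locally injective with Lipschitz inverse (up to controllable distortion). On each $E_k$, $F$ is then a bi-Lipschitz homeomorphism onto $F(E_k)$, and one approximates $F$ by its linear differential at arbitrarily fine scales.

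The main step is then the \emph{linear} area formula: for an invertible linear map $L:\R^m \to \R^m$, $\mathcal{H}^m(L(A)) = |\det L|\,\mathcal{H}^m(A)$ for every Borel $A$. This follows from the uniqueness of Haar measure on $\R^m$ up to normalization. Combining this with the approximation of $F$ by $DF(z_0)$ on sufficiently small balls around each $z_0 \in E_k$ yields, via a standard Vitali-type covering and limiting argument, the identity
\begin{equation}
\int_{E_k} J_F(z)\,dz = \mathcal{H}^m(F(E_k))
\end{equation}
for each $k$. Summing over $k$ and accounting for multiplicity (since different $E_k$'s can map into overlapping regions, contributing to $\#F^{-1}(z')$), and finally extending from characteristic functions to general measurable $\varphi \ge 0$ by monotone convergence and to signed $\varphi$ by splitting into positive and negative parts, gives the claimed identity.

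The hardest technical step is the bi-Lipschitz decomposition of $U \setminus Z$ together with the quantitative linear approximation of $F$; in the paper this could be sidestepped by citing the result directly, since the full proof would be a lengthy detour from the dynamical content. In our application $F = F_n$ is piecewise affine away from a finite union of lines, which is much simpler: on each open piece the Jacobian is a constant $|\det DF_n|$, $F_n$ is injective on each piece, and the area formula reduces to elementary change of variables on each linear piece plus summation over preimages, giving yet another streamlined route if a self-contained proof were desired.
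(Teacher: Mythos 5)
Your proposal is correct and aligns with the paper's approach: the paper does not reprove the area formula but simply cites Federer \cite[Theorems 3.2.3, 3.2.5, Section 3.2.49]{Federer} for the Euclidean statement and its extension to manifolds via charts, exactly as you anticipate in your final paragraph. Your outline (Rademacher, critical-set handling, bi-Lipschitz decomposition, linear case via Haar measure, monotone convergence) is a faithful sketch of the standard proof the paper is invoking.
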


              \begin{remark}
                The proof of the area formula can be found in 
                \cite[Theorem 3.2.3 or Theorem 3.2.5]{Federer} for an
                $m$-dimensional subset of $\R^n$. As discussed in
                \cite[Section 3.2.49]{Federer}, by localizing to
                charts which map $\R^m$ into $M$ we can extend the
                $\R^m$ result to the smooth manifold
                setting. Additional discussion can be found in
                \cite{Morgan, EvansGariepy, Brothers, Nicolaescu}.	
              \end{remark}

        \begin{remark} Since the function $F$ in the area formula is
          Lipschitz, Rademacher's Theorem (c.f. \cite[Theorem
          3.1.6]{Federer}) tells us that is differentiable almost
          everywhere. Thus, the Jacobian $J_F$ is defined almost
          everywhere by $J_F(z)=|\nabla F(z)|$, which is sufficient to
          define the integral in the area formula. If in addition
          $J_F(z)>0$ for almost every $z$, then
          \begin{equation}\label{eq:area2}
            	\int_M \varphi(z)  \mathcal{H}^m(\dz) = \int_M \Bigg(\sum_{z \in F^{-1}(z')}\frac{\varphi(z)}{J_F(z)} \Bigg)\mathcal{H}^m(\dee z').
			\end{equation}
This can be shown by subdividing $A = \{z \in M: J_F(z) > 0\}$ into the disjoint sets $A_n=\{z
               \in A : J_F(z) \in [\frac1{n+1}, \frac1{n} ) \cup
               [n,n+1)\}$ with $n\in\{1,2\dots\}$. On each $A_n$ the area
               formula implies \eqref{eq:area2} when applied to the
               function $\varphi/J_F$. Adding up the integrals over
               each $A_n$ implies the general result assuming the
               left-hand side of \eqref{eq:area2} is well defined. In
               our particular setting, $J_F$ will be uniformly bounded
               from above and below by positive constants, which makes
               \eqref{eq:area2} immediate from the area formula. 
        \end{remark}

 By making an appropriate choice of $F$, we apply Lemma~\ref{lem:area} to express $m(T_\xi^{2n}(R) \cap Q)$ as an integral that amounts essentially to counting intersection points (weighted by an appropriate Jacobian factor) between horizontal lines mapped through $T_\xi^n$ and vertical lines mapped through $T_{\theta^n \xi}^{-n}$.

	\begin{lemma} \label{lem:areaint}
		Fix $n,N \in \N$, $\xi \in \Omega$, and $Q, R \in \mathcal{R}_N$. Define the map $F_{n,\xi}:\T^2 \to \T^2$ by
		\begin{equation} \label{eq:defF}
		F_{n,\xi} = (\Pi_x \circ T^n_{\theta^n\xi},\Pi_y \circ T^{-n}_\xi),
		\end{equation}
		where $\Pi_x, \Pi_y:\T^2 \to \T$ are the projections $\Pi_x(x,y) = x$ and $\Pi_y(x,y) = y$. Let $W_{y,u} = \Pi_x(R) \times \{y\}$ and $W_{x,s} = \{x\} \times \Pi_y(Q)$. Then,
		$$ m(T_\xi^{2n}(R) \cap Q) = \int_{\Pi_x(Q)}\int_{\Pi_y(R)} \left(\sum_{z \in T_\xi^n(W_{y',u}) \cap T_{\theta^n \xi}^{-n}(W_{x',s})} \frac{1}{J_{F_{n,\xi}}(z)}\right) \dee y' \dee x'.
		$$
	\end{lemma}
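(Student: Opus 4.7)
The plan is to identify $m(T^{2n}_\xi(R) \cap Q)$ with the stated integral via the area formula \eqref{eq:area2} applied to $F_{n,\xi}$. First I would reduce to a ``centered'' intersection: since each matrix in $\mathcal{A}$ has determinant one and every kick acts as a rigid translation, both $T^n_\xi$ and $T^n_{\theta^n\xi}$ are Lebesgue-measure-preserving homeomorphisms of $\T^2$. Using the identity $T^{2n}_\xi = T^n_{\theta^n\xi} \circ T^n_\xi$ and applying $(T^n_{\theta^n\xi})^{-1} = T^{-n}_{\theta^n\xi}$, one therefore has
\begin{equation}
m(T^{2n}_\xi(R)\cap Q) \;=\; m\bigl(T^n_\xi(R) \cap T^{-n}_{\theta^n\xi}(Q)\bigr) \;=\; \int_{\T^2} G(z)\,\dee z,
\end{equation}
where $G(z) := \chi_R(T^{-n}_\xi(z))\,\chi_Q(T^n_{\theta^n\xi}(z))$ is the indicator of that intersection.

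Next I would check the hypotheses of \eqref{eq:area2} for $F_{n,\xi}$. Lipschitz regularity is immediate from that of $T^n_{\theta^n\xi}$ and $T^{-n}_\xi$. At any $z$ off the union of singularity sets, writing $A := \nabla T^n_{\theta^n\xi}(z)$ and $B := \nabla T^{-n}_\xi(z)$, one has
\begin{equation}
\nabla F_{n,\xi}(z) = \begin{pmatrix} A_{11} & A_{12} \\ B_{21} & B_{22} \end{pmatrix}.
\end{equation}
The cone conditions of Lemma~\ref{lem:hyperbolic} guarantee $A e_1 \in C_u$ and $B e_2 \in C_s$ with norms at least $(\delta_1\alpha^2)^n$, so the diagonal entries dominate the off-diagonal ones and $J_{F_{n,\xi}} > 0$ almost everywhere. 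Applying \eqref{eq:area2} with $\varphi = G$ then yields
\begin{equation}
m(T^{2n}_\xi(R)\cap Q) \;=\; \int_{\T^2}\sum_{z \in F_{n,\xi}^{-1}(w)} \frac{G(z)}{J_{F_{n,\xi}}(z)}\,\dee w.
\end{equation}

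To finish I would unpack the integrand. Membership of $z$ in $F_{n,\xi}^{-1}(x',y')$ pins down $\Pi_x(T^n_{\theta^n\xi}(z)) = x'$ and $\Pi_y(T^{-n}_\xi(z)) = y'$, while $G(z) = 1$ additionally demands $\Pi_x(T^{-n}_\xi(z)) \in \Pi_x(R)$ and $\Pi_y(T^n_{\theta^n\xi}(z)) \in \Pi_y(Q)$, forcing in particular $(x',y') \in \Pi_x(Q) \times \Pi_y(R)$. These conditions together are equivalent to $T^{-n}_\xi(z) \in W_{y',u}$ and $T^n_{\theta^n\xi}(z) \in W_{x',s}$, i.e., $z \in T^n_\xi(W_{y',u}) \cap T^{-n}_{\theta^n\xi}(W_{x',s})$; outside $\Pi_x(Q) \times \Pi_y(R)$ the inner sum vanishes identically. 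Substituting this characterization and applying Fubini gives the stated identity.

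The main (and essentially only) obstacle I anticipate is the verification that $J_{F_{n,\xi}} > 0$ almost everywhere, so that the $1/J$ version \eqref{eq:area2} of the area formula can be invoked with $\varphi = G$; the rest is measure-theoretic bookkeeping. This reduces to a linear-algebraic estimate on products of matrices from $\mathcal{A}$, which follows from the uniform hyperbolicity and the near-axis alignment of the stable/unstable eigenvectors established in Section~\ref{sec:UniformHyperbolicity}, ensuring that no cancellation occurs in $A_{11}B_{22} - A_{12}B_{21}$.
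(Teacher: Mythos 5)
Your proposal is correct and follows essentially the same route as the paper: reduce via area preservation to $m(T^n_\xi(R)\cap T^{-n}_{\theta^n\xi}(Q))$, apply the area formula \eqref{eq:area2} to $F_{n,\xi}$, and unpack the indicator $\chi_R(T^{-n}_\xi(z))\chi_Q(T^n_{\theta^n\xi}(z))$ to identify the fibers with $T^n_\xi(W_{y',u})\cap T^{-n}_{\theta^n\xi}(W_{x',s})$ for $(x',y')\in \Pi_x(Q)\times\Pi_y(R)$. The Jacobian positivity you flag is handled in the paper exactly as you anticipate (uniform upper and lower bounds on $J_{F_{n,\xi}}$ via the cone/eigenvector alignment, cf. Lemma~\ref{lem:jacobian}), noting only that controlling the off-diagonal entries $A_{12},B_{21}$ requires that alignment argument and not merely the expansion of $Ae_1$ and $Be_2$.
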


\begin{proof}
Since $T$ is area preserving, we have 
	\begin{equation} \label{eq:T2toR2_1}
	m(T^{2n}_\xi(R) \cap Q) = m(T_\xi^n(R) \cap T_{\theta^n \xi}^{-n}(Q)) = \int_{\T^2} \chi_{R}( T_\xi^{-n}(z)) \chi_{Q}( T_{\theta^n \xi}^n(z)) \dz.
	\end{equation}
Thus, applying Lemma~\ref{lem:area} with $F_{n,\xi}$ as defined above we can write
	\begin{align}
	m(T^{2n}_\xi(R) \cap Q) & = \int_{\T^2} \left( \sum_{z \in F_{n,\xi}^{-1}(z')} \frac{\chi_R(T^{-n}_\xi(z)) \chi_Q(T_{\theta^n \xi}^n(z))}{J_{F_{n,\xi}}(z)}\right) \dee z' \\ 
 & = \int_{\Pi_x(Q)} \int_{\Pi_y(R)} \left(\sum_{z \in F_{n,\xi}^{-1}(x',y')} \frac{\chi_{\Pi_x(R)}(\Pi_x \circ T_\xi^{-n}(z)) \chi_{\Pi_y(Q)}(\Pi_y \circ T_{\theta^n \xi}(z))}{J_{F_{n,\xi}}(z)}\right) \dee y' \dee x'.
	\end{align}
 In the second line above we have observed that the definition of $F_{n,\xi}$ implies that if $z' = (x',y') \not \in \Pi_x(Q) \times \Pi_y(R)$, then at least one of $\chi_R(T_\xi^{-n}(z))$ or $\chi_Q(T_{\theta^n \xi}^n(z))$ vanishes for every $z \in F_{n,\xi}^{-1}(z')$. Moreover, for $(x',y') \in \Pi_x(Q) \times \Pi_y(R)$, the product of characteristic functions in the second line above is nonzero if and only if $z \in T_\xi^n (W_{y',u}) \cap T_{\theta^n \xi}^{-n}(W_{x',s})$. The lemma follows.
	\end{proof}

To estimate the sum in Lemma~\ref{lem:areaint} we use the decompositions of $T_\xi^n (W_{y',u})$ and $T_{\theta^n \xi}^{-n}(W_{x',s})$ guaranteed by Lemma~\ref{lem:complexity} and Remark~\ref{rem:stableanalogue}. The idea is essentially that for $(W,V) \in T_\xi^n (W_{y',u}) \times T_{\theta^n \xi}^{-n}(W_{x',s})$ with $|W|,|V| \ge 5/4$, we must have $\#(W \cap V) \ge 1$ for $\alpha$ large since the angle between vectors $v_u \in C_u$ and $v_s \in C_s$ converges to $\pm \pi/2$ as $\alpha \to \infty$. To properly employ \eqref{eq:longdominate} and obtain the correct quantitative estimate we use Lemma~\ref{lem:jacobian}, which relates $J_{F_{n,\xi}}(z)$ to the product of the arclength Jacobians along $W_{y',u}$ and $W_{x',s}$. 
	
	\begin{lemma} \label{lem:lbd}
		 For any $N
                 \in \N$ and $\alpha$ sufficiently large there exists $c > 0$ so that for all $\xi \in \Omega$, horizontal and vertical segments $(W_u, W_s) \in \mathcal{W}_u \times \mathcal{W}_s$ of length $2^{-N}$, and $n \ge 1+N\log(2) + \log(8\alpha)$ we have
		 \begin{equation} \label{eq:mainsumbound}
	\sum_{z
                  \in T_\xi^n(W_u) \cap T^{-n}_{\theta^n \xi}(W_s)}
                \frac{1}{J_{F_{n,\xi}}(z)} \ge c|W_u||W_s|,
		\end{equation}
  where $F_{n,\xi}$ is as before.
              \end{lemma}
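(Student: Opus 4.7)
My plan is to combine the complexity estimates of Lemma~\ref{lem:complexity} with the Jacobian bound of Lemma~\ref{lem:jacobian} to obtain the lower bound by counting intersections between the long pieces of the decomposed curves $T_\xi^n(W_u)$ and $T^{-n}_{\theta^n \xi}(W_s)$. The assumption $n \ge 1 + N\log 2 + \log(8\alpha) = 1 + \log(1/|W_u|) + \log(8\alpha)$ is precisely what is needed to invoke Lemma~\ref{lem:complexity}: applied to $W_u$ it yields a decomposition $T_\xi^n(W_u) = \bigcup_i W_i$ with the long pieces satisfying $\sum_{|W_i| \ge 5/4} |T_\xi^{-n}(W_i)| \ge (1 - C/\alpha) |W_u|$, and analogously Remark~\ref{rem:stableanalogue} produces a decomposition $T^{-n}_{\theta^n\xi}(W_s) = \bigcup_j V_j$ with $\sum_{|V_j| \ge 5/4} |T^n_{\theta^n\xi}(V_j)| \ge (1-C/\alpha)|W_s|$.

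The next step is to show that every pair of long pieces $(W_i, V_j)$ satisfies $\#(W_i \cap V_j) \ge 1$. Since $W_i$ is tangent to the unstable cone, its tangent makes an angle of absolute value at most $C/\alpha$ with the horizontal, so the projection of $W_i$ onto the $x$-circle has length at least $\tfrac{5}{4}\cos\theta_u > 1$ for $\alpha$ large and is therefore surjective; similarly $V_j$ projects surjectively onto the $y$-circle. The uniform separation of $C_u$ and $C_s$ for $\alpha$ large then ensures that any two such segments intersect transversally in at least one point. To analyze the Jacobian, I will exploit that $T$ is piecewise linear: on the preimage $\tilde W_i := T_\xi^{-n}(W_i) \subseteq W_u$ the arclength Jacobian of $T_\xi^n|_{W_u}$ is the constant $|W_i|/|\tilde W_i|$, and likewise the arclength Jacobian of $T^{-n}_{\theta^n\xi}|_{W_s}$ equals $|V_j|/|\tilde V_j|$ on $\tilde V_j := T^n_{\theta^n\xi}(V_j)$. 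Invoking Lemma~\ref{lem:jacobian} to bound $J_{F_{n,\xi}}(z)$ at each intersection point by a uniform constant times the product of these arclength Jacobians (the remaining factor being the sine of the angle between unstable and stable directions, bounded away from zero), and using the upper bound $|W_i|, |V_j| \le 5/2$ inherent in the construction of $\tilde{\mathcal{G}}$ in the proof of Lemma~\ref{lem:complexity}, gives
\[
\frac{1}{J_{F_{n,\xi}}(z)} \ge \frac{c\,|\tilde W_i|\,|\tilde V_j|}{|W_i|\,|V_j|} \ge c'\,|\tilde W_i|\,|\tilde V_j|
\]
for every intersection point $z \in W_i \cap V_j$. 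Summing over all long pairs and then using the complexity estimates produces a lower bound of $c''(1-C/\alpha)^2 |W_u|\,|W_s|$, which for $\alpha$ sufficiently large yields the claimed inequality with some $c>0$.

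The main obstacle I expect is the intersection step together with extracting the right quantitative form of Lemma~\ref{lem:jacobian}. A segment of length only marginally exceeding one can wrap around $\T^2$ in subtle ways, so one cannot rely on elementary topological reasoning and must instead use the cone condition to guarantee surjective projection onto the appropriate coordinate circle and a single point of transverse crossing with any segment from the opposite cone. In the Jacobian analysis, the piecewise linearity of $T$ is essential, as it lets one treat the arclength Jacobians as exact ratios of image to preimage lengths rather than asymptotic quantities and thereby telescope cleanly with the complexity estimate. Finally, all constants must be uniform in $\xi \in \Omega$; fortunately the cone bounds in Lemma~\ref{lem:hyperbolic}, the complexity estimate in Lemma~\ref{lem:complexity}, and (presumably) Lemma~\ref{lem:jacobian} are already formulated uniformly in $\xi$, so this uniformity is automatic.
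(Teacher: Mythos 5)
Your proposal follows the paper's proof essentially step for step: decompose $T_\xi^n(W_u)$ and $T^{-n}_{\theta^n\xi}(W_s)$ via Lemma~\ref{lem:complexity} and Remark~\ref{rem:stableanalogue}, restrict attention to the long pieces, observe that each long pair intersects at least once by the cone separation, rewrite $J_{F_{n,\xi}}$ via Lemma~\ref{lem:jacobian} as a product of arclength Jacobians that are constant ratios $|W_i|/|\tilde W_i|$ and $|V_j|/|\tilde V_j|$ by piecewise linearity, and close with the uniform upper bound $|W_i|,|V_j|\le 5/2$ together with the complexity estimate~\eqref{eq:longdominate}. This matches the paper's argument, so the proposal is correct and not a genuinely different route.
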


\begin{remark}
    The upper bound mentioned in Remark~\ref{rem:CouldGetUpper'} is obtained by proving an upper bound analogous to \eqref{eq:mainsumbound}.
\end{remark}

	\begin{proof}
		 Let $G^u_n(W_{u}) \subseteq \mathcal{W}_u$ and $G^s_n(W_{s}) \subseteq \mathcal{W}_s$ denote the decompositions of $T_\xi^n (W_{u})$ and $T_{\theta^n \xi}^{-n}(W_{s})$ guaranteed by Lemma~\ref{lem:complexity} and Remark~\ref{rem:stableanalogue}. Define also $L_n(W_{u})$ as the elements $W \in G_n^u(W_{u})$ with $|W| \ge 5/4$ and similarly define $L_n(W_{s})$. Clearly, we have 
		 \begin{equation}
		 \sum_{z \in T_\xi^n(W_u) \cap T^{-n}_{\theta^n \xi}(W_s)} \frac{1}{J_{F_{n,\xi}}(z)} = \sum_{W \in G^n_u(W_u)} \sum_{V \in G^n_s(W_s)} \sum_{z \in W \cap V} \frac{1}{J_{F_{n,\xi}}(z)}.
		 \end{equation}
		 Let $J_{u,n}(z) = |\grad T_\xi^n(z) e_x|$ and $J_{s,n}(z) = |\grad T^{-n}_{\theta^n \xi}(z) e_y|$, where $e_x$ and $e_y$ are the standard basis vectors of $\R^2$. By Lemma~\ref{lem:jacobian} and trivially bounding the sum from below by excluding the short lines we have 
	
 \begin{align}&\sum_{W \in G_n^u(W_{u})} \sum_{V \in G_n^s(W_{s})} \sum_{z \in W \cap V} \frac{1}{J_{F_{n,\xi}}(z)} \\ 
		 & \quad \ge (1-C\alpha^{-1})\sum_{W \in L_n(W_{u})} \sum_{V \in L_n(W_{s})} \sum_{z \in W \cap V} \frac{1}{J_{u,n}(T_\xi^{-n}(z)) J_{s,n}(T^n_{\theta^n \xi}(z))} \\ 
		 & \quad = (1-C\alpha^{-1})\sum_{W \in L_n(W_{u})} \sum_{V \in L_n(W_{s})} \#(W \cap V) \times \frac{|T^{-n}_\xi W| |T_{\theta^n \xi}^n V|}{|W| |V|}.
		 \end{align}
		 For $W$ and $V$ as in the last sum above we have $\#(W \cap V) \ge 1$ and $|W|, |V| \le 5/2$, and hence by \eqref{eq:longdominate} we have 
		 \begin{align}
		 \sum_{W \in G_n^u(W_{u})} \sum_{V \in G_n^s(W_{s})} \sum_{z \in W \cap V} \frac{1}{J_{F_{n,\xi}}(z)} &\ge \frac{4}{20}(1-C \alpha^{-1}) \sum_{W \in L_n(W_u)} |T_\xi^{-n} W| \sum_{V \in L_n(W_s)}  |T_{\theta^n \xi}^n V| \\ 
		 & \ge \frac{4}{20} \left(1-C\alpha^{-1}\right)^3 |W_u||W_s|,
		 \end{align} 
		 which completes the proof.
	\end{proof}
	
	The proof of Lemma~\ref{lem:weakmix} now follows easily.
	
	\begin{proof}[Proof of
          Lemma~\ref{lem:weakmix}]
		Fix $N \in \N$. Let $\alpha$ be large enough so that Lemma~\ref{lem:lbd} applies and let $c > 0$ be as in \eqref{eq:mainsumbound}. Taking $n \ge 1 + N\log(2) + \log(8\alpha)$, it follows by Lemmas~\ref{lem:area} and~\ref{lem:lbd} that for any $R,Q \in \mathcal{R}_N$, and $\xi \in \Omega$ we have
\begin{align*}
    m(T_\xi^{2n}(R) \cap Q) \ge c \int_{\Pi_x(Q)}\int_{\Pi_y(R)} |W_{y',u}||W_{x',s}| \dee y' \dee x = c 2^{-4N} = cm(R)m(Q),
\end{align*}
which is the desired estimate.
\end{proof}

\appendix 
\section{Jacobian estimates}

We now prove the technical Jacobian estimate needed in the proof of Lemma~\ref{lem:weakmix}.

	\begin{lemma} \label{lem:jacobian}
	Fix $\xi \in \Omega$ and $n \in \N$. Define $J_{u,n}(z) = |\grad T_\xi^n(z) e_x|$ and $J_{s,n}(z) = |\grad T_{\theta^n \xi}^{-n}(z) e_y|$, and let $F_{n,\xi}$ be as defined in \eqref{eq:defF}. There exists a constant $C \ge 1$ that does not depend on $\xi$ or $n$ such that for all $\alpha$ sufficiently large there holds
	\begin{equation} \label{eq:jacobianlem}
	\left| \frac{J_{u,n}(T_\xi^{-n}(z))J_{s,n}(T^n_{\theta^n \xi}(z))}{J_{F_{n,\xi}}(z)} - 1\right| +  \left| \frac{J_{F_{n,\xi}}(z)}{J_{u,n}(T_\xi^{-n}(z))J_{s,n}(T^n_{\theta^n \xi}(z))} - 1\right| \le \frac{C}{\alpha}
	\end{equation}
	for every $z \in \T^2$ in the full measure set where the derivatives in \eqref{eq:jacobianlem} exist.
\end{lemma}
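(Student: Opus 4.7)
The plan is to reduce the bound to a $2\times 2$ linear-algebra calculation for the gradient of $F_{n,\xi}$ and then extract the leading-order behavior using the cone conditions of Lemma~\ref{lem:hyperbolic}.

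Fix $z$ at which both $T^n_{\theta^n \xi}$ and $T^{-n}_\xi$ are differentiable, and set $A = \grad T^n_{\theta^n \xi}(z)$ and $B = \grad T^{-n}_\xi(z)$. Each is a product of matrices from $\mathcal{A}$ (or their inverses), and since every matrix in $\mathcal{A}$ has determinant $1$ by direct computation, so do $A$ and $B$. From the definition $F_{n,\xi}(z) = (\Pi_x \circ T^n_{\theta^n\xi}(z),\,\Pi_y \circ T^{-n}_\xi(z))$ we read off
\[ \grad F_{n,\xi}(z) = \begin{pmatrix} A_{11} & A_{12} \\ B_{21} & B_{22} \end{pmatrix}, \qquad J_{F_{n,\xi}}(z) = |A_{11} B_{22} - A_{12} B_{21}|. \]
Differentiating $T^n_\xi \circ T^{-n}_\xi = \Id$ at $z$ gives $\grad T^n_\xi(T^{-n}_\xi z) = B^{-1}$, hence $J_{u,n}(T^{-n}_\xi z) = |B^{-1} e_x|$, and symmetrically $J_{s,n}(T^n_{\theta^n\xi} z) = |A^{-1} e_y|$. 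The lemma reduces to showing
\[ |A_{11}B_{22} - A_{12} B_{21}| = |B^{-1} e_x|\,|A^{-1} e_y|\bigl(1 + O(\alpha^{-1})\bigr). \]

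Next I would diagonalize. By Lemma~\ref{lem:hyperbolic}, $A$ has eigenvectors $e_u^A \in C_u$ and $e_s^A \in C_s$ with eigenvalues $\lambda_u, \lambda_u^{-1}$, where $|\lambda_u| \ge (\delta_1 \alpha^2)^n$; the cone conditions give, after normalization, $e_u^A = (1,\,O(\alpha^{-1}))$ and $e_s^A = (O(\alpha^{-1}),\,\pm 1)$ up to $O(\alpha^{-2})$ corrections. Solving the systems $e_x = a_u e_u^A + a_s e_s^A$ and $e_y = b_u e_u^A + b_s e_s^A$ yields
\[ a_u = 1 + O(\alpha^{-2}),\quad a_s = O(\alpha^{-1}),\quad b_u = O(\alpha^{-1}),\quad b_s = \pm 1 + O(\alpha^{-2}). \]
The matrix $B$ admits the analogous diagonalization, except that its expanding eigenvector $f_u^B$ lies in $C_s$ and its contracting eigenvector $f_s^B$ in $C_u$, with expanding eigenvalue $\mu_u$ of magnitude $\ge (\delta_1 \alpha^2)^n$. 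Writing $Ae_x$, $Ae_y$, $Be_x$, $Be_y$, $A^{-1}e_y$, and $B^{-1}e_x$ in the appropriate eigenbases and taking the relevant Cartesian components — the contracting contributions having magnitude of order $|\lambda_u|^{-1}$ or $|\mu_u|^{-1}$ and being negligible for $n \ge 1$ and $\alpha$ large — I obtain
\[ A_{11} = \lambda_u\bigl(1 + O(\alpha^{-2})\bigr),\ A_{12} = O(|\lambda_u|/\alpha),\ B_{22} = \mu_u\bigl(1 + O(\alpha^{-2})\bigr),\ B_{21} = O(|\mu_u|/\alpha), \]
together with $|B^{-1} e_x| = |\mu_u|\bigl(1 + O(\alpha^{-2})\bigr)$ and $|A^{-1} e_y| = |\lambda_u|\bigl(1 + O(\alpha^{-2})\bigr)$. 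Substituting into both sides of the reduced identity yields $|\lambda_u \mu_u|\bigl(1 + O(\alpha^{-1})\bigr)$ in each, and so both summands in \eqref{eq:jacobianlem} are $O(\alpha^{-1})$.

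The main technical point is verifying that the subdominant cross term $A_{12} B_{21}$ in the determinant, whose magnitude is of order $|\lambda_u \mu_u|\alpha^{-2}$, does not cancel the diagonal term $A_{11} B_{22}$ at leading order. This is exactly what the cone conditions guarantee: they force $A_{11}$ and $B_{22}$ to inherit the signs of $\lambda_u$ and $\mu_u$ and to dominate their respective matrices, so $|A_{11} B_{22} - A_{12} B_{21}| = |A_{11} B_{22}|\bigl(1 + O(\alpha^{-2})\bigr)$. Beyond that the argument is routine bookkeeping of $O(\alpha^{-1})$ error terms and requires no dynamical input beyond Lemma~\ref{lem:hyperbolic}.
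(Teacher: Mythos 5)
Your proposal is correct and follows essentially the same route as the paper's proof: both express everything in the eigenbasis of the cocycle matrices, use the cone conditions to say the eigenvectors are $O(\alpha^{-1})$-close to the coordinate axes, and use unit determinant (reciprocal eigenvalues, equivalently the area-preserving identity in the paper's Eq.~\eqref{eq:jacobianlem9}) to make the unstable eigenvalue the only relevant scale. The paper organizes the calculation as a chain of intermediate ratio estimates (Eqs.~\eqref{eq:jacobianlem1}--\eqref{eq:jacobianlem9}), whereas you write down the $2\times 2$ matrix $\grad F_{n,\xi}$ and compute its determinant directly together with $|B^{-1}e_x|$ and $|A^{-1}e_y|$; this is a cleaner presentation of the same estimates, and your observation via the chain rule that $J_{u,n}(T^{-n}_\xi z)=|B^{-1}e_x|$ lets you skip the intermediate quantity $|Ae_x||Be_y|$ that the paper passes through.
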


\begin{proof}
	By the elementary inequality 
	\begin{equation} \label{eq:elementary1}
 \left| \frac{a}{b} - 1\right| \le \frac{\left|\frac{b}{a} - 1\right|}{1- \left|\frac{b}{a}-1\right|}
 \end{equation}
	for nonzero $a,b \in \R$, it suffices to estimate only the first term on the left-hand side of \eqref{eq:jacobianlem}. We will also make use of the related inequality
	\begin{equation} \label{eq:elementary2}
	\left|\frac{c}{d}-1\right| \le \left|\frac{c}{a}-1\right|\left(1+\left|\frac{a}{d} - 1\right| \right) + \left|\frac{a}{d} -1\right|,
	\end{equation}
	which holds for any $a,b,c,d \in \R$ with $a$ and $d$ nonzero. In this proof, $C$ will denote a positive constant, which may change from line-to-line, that does not depend on $\xi$, $n$, or $\alpha$ for $\alpha$ sufficiently large.
	
	Let $z \in \T^2$ be such that all of the derivatives in \eqref{eq:jacobianlem} exist. We first claim that 
	\begin{equation} \label{eq:jacobianlem1}
	\frac{|\partial_y (\Pi_x \circ T_{\theta^n \xi}^n)(z)|}{|\partial_x (\Pi_x \circ T_{\theta^n \xi}^n)(z)|} \le \frac{C}{\alpha}
	\end{equation}
	for $\alpha$ sufficiently large. Indeed, let $e_s$ and $e_u$ denote the normalized stable and unstable eigenvectors for $\grad T_{\theta^n \xi}^n(z)$ and let $\lambda \in \R$ be the unstable eigenvalue. If $\theta \in [0,\pi)$ denotes the angle between $e_x$ and $e_s$, then since $e_s \in C_s$ we have $|\theta-\pi/2| \le C\alpha^{-1}$. Therefore, 
 $$ |e_x \cdot e_s| = |\cos\theta| = |\sin(\theta - \pi/2)| \le C\alpha^{-1}$$
 for $\alpha$ large. Using also that $e_u \in C_u$, we similarly have $$|e_y \cdot e_u| \le C\alpha^{-1}, \quad |e_x \cdot e_u| \ge 1-C^{-1}\alpha^{-1}, \quad \text{and} \quad |e_y \cdot e_s| \ge 1-C^{-1}\alpha^{-1}.$$ 
 For $\alpha$ large we can thus write 
	$$e_x = a_1 e_u + b_1 e_s \quad \text{ and } \quad e_y = a_2 e_u + b_2 e_s$$
	for coefficients satisfying $|a_1|, |b_2| \approx 1$ and $|a_2|, |b_1| \le C \alpha^{-1}$. Noting also that $|e_u \cdot e_s| \le C\alpha^{-1}$ we have
	\begin{align*}
	\frac{|\partial_y (\Pi_x \circ T_{\theta^n \xi}^n)(z)|}{|\partial_x (\Pi_x \circ T_{\theta^n \xi}^n)(z)|} & = \frac{|e_x \cdot \grad T_{\theta^n \xi}^n(z)e_y|}{|e_x \cdot \grad T_{\theta^n \xi}^n(z)e_x|} \\ 
	& = \frac{|a_1 a_2 \lambda +b_1 b_2 \lambda^{-1} + (e_u \cdot e_s) (a_1 b_2 \lambda^{-1} + a_2 b_1 \lambda)|}{|a_1^2 \lambda + b_1^2 \lambda^{-1} + (e_u \cdot e_s)a_1b_1(\lambda + \lambda^{-1})|} \\ 
	& \le \frac{C\alpha^{-1} |\lambda|}{a_1^2 |\lambda| - b_1^2 |\lambda|^{-1} - 2|\lambda||a_1 b_1| |e_u \cdot e_s|} \\ 
	& \le \frac{C \alpha^{-1} |\lambda|}{a_1^2|\lambda| - C\alpha^{-1}|\lambda|} \\ 
	& \le \frac{C}{\alpha}.
	\end{align*}
	This proves \eqref{eq:jacobianlem1}. The same argument shows that 
	\begin{equation} \label{eq:jacobianlem2}
	\frac{|\partial_x(\Pi_y \circ T_{\xi}^{-n})(z)|}{|\partial_y(\Pi_y \circ T_\xi^{-n})(z)|} \le \frac{C}{\alpha}, \quad \frac{|\partial_x(\Pi_y \circ T_{\theta^n \xi}^n)(z)|}{|\partial_x(\Pi_x\circ T_{\theta^n \xi}^n)(z)|} \le \frac{C}{\alpha}, \quad \text{and} \quad \frac{|\partial_y (\Pi_x \circ T_\xi^{-n})(z)|}{|\partial_y(\Pi_y \circ T_\xi^{-n})(z)|} \le \frac{C}{\alpha}.
	\end{equation}
	
	Combining \eqref{eq:jacobianlem1} and the first estimate in \eqref{eq:jacobianlem2} it is immediate from the formula for $J_{F_{n,\xi}}(z)$ that
	\begin{equation} \label{eq:jacobianlem3}
	\left|\frac{J_{F_{n,\xi}}(z)}{|\partial_x(\Pi_x\circ T_{\theta^n \xi}^n)(z)| |\partial_y(\Pi_y \circ T_\xi^{-n})(z)|} - 1 \right| \le \frac{C}{\alpha}.
	\end{equation}
	It is also straightforward to prove from the second two bounds in \eqref{eq:jacobianlem2} that 
	\begin{equation} \label{eq:jacobianlem4}
	\left|\frac{|\grad T_{\theta^n \xi}^n(z)e_x| | \grad T_{\xi}^{-n}(z)e_y|}{|\partial_x(\Pi_x\circ T_{\theta^n \xi}^n)(z)| |\partial_y(\Pi_y \circ T_\xi^{-n})(z)|} - 1 \right| \le \frac{C}{\alpha}.
	\end{equation}
	Together, \eqref{eq:elementary1}, \eqref{eq:elementary2}, \eqref{eq:jacobianlem3}, and \eqref{eq:jacobianlem4} imply 
	\begin{equation} \label{eq:jacobianlem5}
	\left| \frac{J_{F_{n,\xi}}(z)}{|\grad T_{\theta^n \xi}^n(z)e_x| | \grad T_{\xi}^{-n}(z)e_y|} - 1 \right| \le \frac{C}{\alpha}.
	\end{equation}
	
	From \eqref{eq:elementary1}, \eqref{eq:elementary2}, and \eqref{eq:jacobianlem5}, it remains only to show that 
	\begin{equation} \label{eq:jacobianlem6}
	\left| \frac{J_{u,n}(T_\xi^{-n}(z)) J_{s,n}(T^n_{\theta^n \xi}(z))}{|\grad T_{\theta^n \xi}^n(z)e_x| | \grad T_{\xi}^{-n}(z)e_y|} - 1 \right| \le \frac{C}{\alpha}.
	\end{equation}
	We will just prove that 
	\begin{equation} \label{eq:jacobianlem7}
	\left| \frac{J_{u,n}(T_\xi^{-n}(z))}{| \grad T_{\xi}^{-n}(z)e_y|} - 1 \right| \le \frac{C}{\alpha},
	\end{equation}
	as the same inequality for the other ratio follows similarly, and combining them to deduce \eqref{eq:jacobianlem6} is straightforward. Define $\bar{z} = T_\xi^{-n}(z)$ and let $\bar{e}_s$ and $\bar{e}_u$ denote the stable and unstable eigenvectors of $\grad T_\xi^n(\bar{z})$. Reasoning similar to the proof of \eqref{eq:jacobianlem1} above and noting also that $\bar{e}_s$ is the unstable eigenvector for $\grad T_\xi^{-n}(z)$ shows that
	\begin{equation} \label{eq:jacobianlem8}
	\left| \frac{J_{u,n}(\bar{z})}{|\grad T^n_\xi(\bar{z})\bar{e}_u|} - 1 \right| + \left| \frac{|\grad T_\xi^{-n}(z) \bar{e}_s|}{|\grad T_\xi^{-n}(z)e_y|} - 1\right| \le \frac{C}{\alpha}. 
	\end{equation}
	Now, since $T_\xi^n$ is area preserving, we have
	\begin{equation}\label{eq:jacobianlem9}
	|\grad T_\xi^n(\bar{z})\bar{e}_u| = |\grad T_\xi^n(\bar{z}) \bar{e}_s|^{-1} = |\grad T_\xi^{-n}(z) \bar{e}_s|.
	\end{equation}
	Combining \eqref{eq:jacobianlem8} with \eqref{eq:jacobianlem9} and then applying \eqref{eq:elementary2} yields \eqref{eq:jacobianlem7}.
\end{proof}
	
	\phantomsection

        \noindent {\bf Acknowledgments:} The authors thank the NSF for
               its support through the RTG grant
               DMS-2038056. T.E. acknowledges funding from the NSF
               grants DMS-2043024 and DMS-212474 as well as an Alfred
               P. Sloan Fellowship. T.E. thanks Craig Chen for very
               helpful computations and numerics related to this
               problem in the 2021-2022 academic year. All of the
               authors thank Gautam Iyer for useful feedback on an
               early draft of the paper and informative and
               illuminating discussions around this topic over the years.
	\addcontentsline{toc}{section}{References}
	\bibliographystyle{abbrv}
	\bibliography{WedgeBib}

\begin{thebibliography}{10}

\bibitem{ACM19}
G.~Alberti, G.~Crippa, and A.~Mazzucato.
\newblock Exponential self-similar mixing by incompressible flows.
\newblock {\em Journal of the American Mathematical Society}, 32(2):445--490,
  2019.

\bibitem{ANB22}
D.~Albritton, R.~Beekie, and M.~Novack.
\newblock Enhanced dissipation and {H}{\"o}rmander's hypoellipticity.
\newblock {\em Journal of Functional Analysis}, 283(3):109522, 2022.

\bibitem{BaladiDemers20}
V.~Baladi and M.~Demers.
\newblock On the measure of maximal entropy for finite horizon {S}inai billiard
  maps.
\newblock {\em Journal of the American Mathematical Society}, 33(2):381--449,
  2020.

\bibitem{BaladiDemersLiverani18}
V.~Baladi, M.~F. Demers, and C.~Liverani.
\newblock Exponential decay of correlations for finite horizon {S}inai billiard
  flows.
\newblock {\em Inventiones mathematicae}, 211(1):39--177, 2018.

\bibitem{BBPSdiss}
J.~Bedrossian, A.~Blumenthal, and S.~Punshon-Smith.
\newblock Almost-sure enhanced dissipation and uniform-in-diffusivity
  exponential mixing for advection--diffusion by stochastic {N}avier--{S}tokes.
\newblock {\em Probability Theory and Related Fields}, 179:777--834, 2021.

\bibitem{BBPSmix}
J.~Bedrossian, A.~Blumenthal, and S.~Punshon-Smith.
\newblock Almost-sure exponential mixing of passive scalars by the stochastic
  {N}avier--{S}tokes equations.
\newblock {\em The Annals of Probability}, 50(1):241--303, 2022.

\bibitem{BBPSchaos}
J.~Bedrossian, A.~Blumenthal, and S.~Punshon-Smith.
\newblock Lagrangian chaos and scalar advection in stochastic fluid mechanics.
\newblock {\em Journal of the European Mathematical Society}, 24(6):1893--1990,
  2022.

\bibitem{BCZ17}
J.~Bedrossian and M.~Coti~Zelati.
\newblock Enhanced dissipation, hypoellipticity, and anomalous small noise
  inviscid limits in shear flows.
\newblock {\em Archive for Rational Mechanics and Analysis}, 224(3):1161--1204,
  2017.

\bibitem{BGM17}
J.~Bedrossian, P.~Germain, and N.~Masmoudi.
\newblock On the stability threshold for the {3D} {C}ouette flow in {S}obolev
  regularity.
\newblock {\em Annals of Mathematics}, 185(2):541--608, 2017.

\bibitem{BGM20}
J.~Bedrossian, P.~Germain, and N.~Masmoudi.
\newblock {\em Dynamics near the subcritical transition of the {3D} Couette
  flow I: Below threshold case}, volume 266.
\newblock American Mathematical Society, 2020.

\bibitem{BGCZ22}
A.~Blumenthal, M.~C. Zelati, and R.~S. Gvalani.
\newblock Exponential mixing for random dynamical systems and an example of
  {P}ierrehumbert.
\newblock {\em arXiv preprint arXiv:2204.13651}, 2022.

\bibitem{Bressan03}
A.~Bressan.
\newblock A lemma and a conjecture on the cost of rearrangements.
\newblock {\em Rendiconti del Seminario Matematico della Universita di Padova},
  110:97--102, 2003.

\bibitem{Brothers}
J.~E. Brothers.
\newblock Integral geometry in homogeneous spaces.
\newblock {\em Transactions of the American Mathematical Society},
  124(3):480--517, 1966.

\bibitem{BCZM22}
E.~Bru{\`e}, M.~C. Zelati, and E.~Marconi.
\newblock Enhanced dissipation for two-dimensional {H}amiltonian flows.
\newblock {\em arXiv preprint arXiv:2211.14057}, 2022.

\bibitem{BuniChernovSinai91}
L.~A. Bunimovich, Y.~G. Sinai, and N.~I. Chernov.
\newblock Statistical properties of two-dimensional hyperbolic billiards.
\newblock {\em Russian Mathematical Surveys}, 46(4):47, 1991.

\bibitem{ZlatosNumerical}
L.-T. Cheng, F.~Rajasekaran, K.~Y.~J. Wong, and A.~Zlatos.
\newblock Numerical evidence of exponential mixing by alternating shear flows.
\newblock {\em arXiv preprint arXiv:2111.00093}, 2021.

\bibitem{Chernov92}
N.~Chernov.
\newblock Ergodic and statistical properties of piecewise linear hyperbolic
  automorphisms of the 2-torus.
\newblock {\em Journal of statistical physics}, 69:111--134, 1992.

\bibitem{Chernov99}
N.~Chernov.
\newblock Decay of correlations and dispersing billiards.
\newblock {\em Journal of Statistical Physics}, 94:513--556, 1999.

\bibitem{ChernovZhang05}
N.~Chernov and H.-K. Zhang.
\newblock Billiards with polynomial mixing rates.
\newblock {\em Nonlinearity}, 18(4):1527, 2005.

\bibitem{WCZC20}
M.~Colombo, M.~C. Zelati, and K.~Widmayer.
\newblock Mixing and diffusion for rough shear flows.
\newblock {\em Ars Inveniendi Analytica}, 2021.

\bibitem{CKRZ08}
P.~Constantin, A.~Kiselev, L.~Ryzhik, and A.~Zlato{\v{s}}.
\newblock Diffusion and mixing in fluid flow.
\newblock {\em Annals of Mathematics}, pages 643--674, 2008.

\bibitem{Cooperman22}
W.~Cooperman.
\newblock Exponential mixing by shear flows.
\newblock {\em arXiv preprint arXiv:2206.14239}, 2022.

\bibitem{CZEW20}
M.~Coti~Zelati, T.~M. Elgindi, and K.~Widmayer.
\newblock Enhanced dissipation in the {N}avier--{S}tokes equations near the
  {P}oiseuille flow.
\newblock {\em Communications in Mathematical Physics}, 378(2):987--1010, 2020.

\bibitem{CripDeLellis08}
G.~Crippa and C.~De~Lellis.
\newblock Estimates and regularity results for the {D}iperna-{L}ions flow.
\newblock {\em J. Reine Angew. Math.}, 616:15--46, 2008.

\bibitem{DemersLiverani08}
M.~Demers and C.~Liverani.
\newblock Stability of statistical properties in two-dimensional piecewise
  hyperbolic maps.
\newblock {\em Transactions of the American Mathematical Society},
  360(9):4777--4814, 2008.

\bibitem{Demers20}
M.~F. Demers.
\newblock Uniqueness and exponential mixing for the measure of maximal entropy
  for piecewise hyperbolic maps.
\newblock {\em Discrete and Continuous Dynamical Systems}, 41:217--256, 2021.

\bibitem{ElgindiZlatos19}
T.~M. Elgindi and A.~Zlato{\v{s}}.
\newblock Universal mixers in all dimensions.
\newblock {\em Advances in Mathematics}, 356:106807, 2019.

\bibitem{EvansGariepy}
L.~C. Evans and R.~F. Gariepy.
\newblock {\em Measure theory and fine properties of functions}.
\newblock Textbooks in Mathematics. CRC Press, Boca Raton, FL, revised edition,
  2015.

\bibitem{Federer}
H.~Federer.
\newblock {\em Geometric measure theory}.
\newblock Die Grundlehren der mathematischen Wissenschaften, Band 153.
  Springer-Verlag New York, Inc., New York, 1969.

\bibitem{FengIyer19}
Y.~Feng and G.~Iyer.
\newblock Dissipation enhancement by mixing.
\newblock {\em Nonlinearity}, 32(5):1810, 2019.

\bibitem{FengMazz22}
Y.~Feng, A.~L. Mazzucato, and C.~Nobili.
\newblock Enhanced dissipation by circularly symmetric and parallel pipe flows.
\newblock {\em Physica D: Nonlinear Phenomena}, page 133640, 2022.

\bibitem{GCZ21}
T.~Gallay and M.~C. Zelati.
\newblock Enhanced dissipation and {T}aylor dispersion in higher-dimensional
  parallel shear flows.
\newblock {\em arXiv preprint arXiv:2108.11192}, 2021.

\bibitem{He22}
S.~He.
\newblock Enhanced dissipation, hypoellipticity for passive scalar equations
  with fractional dissipation.
\newblock {\em Journal of Functional Analysis}, 282(3):109319, 2022.

\bibitem{Hill22}
J.~M. Hill, R.~Sturman, and M.~C. Wilson.
\newblock Exponential mixing by orthogonal non-monotonic shears.
\newblock {\em Physica D: Nonlinear Phenomena}, 434:133224, 2022.

\bibitem{Hill23}
J.~M. Hill, R.~Sturman, and M.~C. Wilson.
\newblock Loss of exponential mixing in a non-monotonic toral map.
\newblock {\em arXiv preprint arXiv:2303.08515}, 2023.

\bibitem{IyKis14}
G.~Iyer, A.~Kiselev, and X.~Xu.
\newblock Lower bounds on the mix norm of passive scalars advected by
  incompressible enstrophy-constrained flows.
\newblock {\em Nonlinearity}, 27(5):973, 2014.

\bibitem{IyerNolanLu23}
G.~Iyer, E.~Lu, and J.~Nolen.
\newblock Using bernoulli maps to accelerate mixing of a random walk on the
  torus.
\newblock {\em arXiv preprint arXiv:2303.03528}, 2023.

\bibitem{IyerXuZlatos21}
G.~Iyer, X.~Xu, and A.~Zlato{\v{s}}.
\newblock Convection-induced singularity suppression in the {K}eller-{S}egel
  and other non-linear {PDE}s.
\newblock {\em Transactions of the American Mathematical Society},
  374(09):6039--6058, 2021.

\bibitem{IyerZhou22}
G.~Iyer and H.~Zhou.
\newblock Quantifying the dissipation enhancement of cellular flows.
\newblock {\em arXiv preprint arXiv:2209.11645}, 2022.

\bibitem{KatokStrelcyn}
A.~Katok and J.-M. Strelcyn.
\newblock {\em Invariant manifolds, entropy and billiards. Smooth maps with
  singularities}, volume 1222.
\newblock Springer, 2006.

\bibitem{Kelvin}
L.~Kelvin.
\newblock Stability of fluid motion: rectilinear motion of viscous fluid
  between two parallel plates.
\newblock {\em Phil. Mag}, 24(5):188--196, 1887.

\bibitem{KZS08}
A.~Kiselev, R.~Shterenberg, and A.~Zlato{\v{s}}.
\newblock Relaxation enhancement by time-periodic flows.
\newblock {\em Indiana University mathematics journal}, pages 2137--2152, 2008.

\bibitem{Kolmogorov34}
A.~Kolmogoroff.
\newblock Zufallige bewegungen (zur theorie der brownschen bewegung).
\newblock {\em Annals of Mathematics}, pages 116--117, 1934.

\bibitem{Lin11}
Z.~Lin, J.-L. Thiffeault, and C.~R. Doering.
\newblock Optimal stirring strategies for passive scalar mixing.
\newblock {\em Journal of Fluid Mechanics}, 675:465--476, 2011.

\bibitem{Liverani95}
C.~Liverani.
\newblock Decay of correlations.
\newblock {\em Annals of Mathematics}, 142(2):239--301, 1995.

\bibitem{Lunasin12}
E.~Lunasin, Z.~Lin, A.~Novikov, A.~Mazzucato, and C.~R. Doering.
\newblock Optimal mixing and optimal stirring for fixed energy, fixed power, or
  fixed palenstrophy flows.
\newblock {\em Journal of mathematical physics}, 53(11):115611, 2012.

\bibitem{MasmoudiZhao19}
N.~Masmoudi and W.~Zhao.
\newblock Stability threshold of the {2D} {C}ouette flow in {S}obolev spaces.
\newblock {\em Annales de l'Institut Henri Poincar\'{e} C}, 39:245--325, 2022.

\bibitem{Morgan}
F.~Morgan.
\newblock {\em Geometric measure theory}.
\newblock Elsevier/Academic Press, Amsterdam, fifth edition, 2016.
\newblock A beginner's guide, Illustrated by James F. Bredt.

\bibitem{Nicolaescu}
L.~I. Nicolaescu.
\newblock The coarea formula.
\newblock 2011.
\newblock \url{https://www3.nd.edu/~lnicolae/Coarea.pdf}.

\bibitem{Pierrehumbert94}
R.~T. Pierrehumbert.
\newblock Tracer microstructure in the large-eddy dominated regime.
\newblock {\em Chaos, Solitons \& Fractals}, 4(6):1091--1110, 1994.

\bibitem{V21}
J.~Vukadinovic.
\newblock The limit of vanishing diffusivity for passive scalars in hamiltonian
  flows.
\newblock {\em Archive for Rational Mechanics and Analysis}, 242(3):1395--1444,
  2021.

\bibitem{Wei21}
D.~Wei.
\newblock Diffusion and mixing in fluid flow via the resolvent estimate.
\newblock {\em Science China Mathematics}, 64:507--518, 2021.

\bibitem{WeiZhang19}
D.~Wei and Z.~Zhang.
\newblock Enhanced dissipation for the {K}olmogorov flow via the hypocoercivity
  method.
\newblock {\em Science China Mathematics}, 62:1219--1232, 2019.

\bibitem{WeiZhangZhao20}
D.~Wei, Z.~Zhang, and W.~Zhao.
\newblock Linear inviscid damping and enhanced dissipation for the {K}olmogorov
  flow.
\newblock {\em Advances in Mathematics}, 362:106963, 2020.

\bibitem{Young98}
L.-S. Young.
\newblock Statistical properties of dynamical systems with some hyperbolicity.
\newblock {\em Annals of Mathematics}, pages 585--650, 1998.

\bibitem{CZED20}
M.~C. Zelati, M.~G. Delgadino, and T.~M. Elgindi.
\newblock On the relation between enhanced dissipation timescales and mixing
  rates.
\newblock {\em Communications on Pure and Applied Mathematics},
  73(6):1205--1244, 2020.

\bibitem{CZD20}
M.~C. Zelati and M.~Dolce.
\newblock Separation of time-scales in drift-diffusion equations on
  $\mathbb{R}^2$.
\newblock {\em Journal de Math{\'e}matiques Pures et Appliqu{\'e}es},
  142:58--75, 2020.

\bibitem{CZDr19}
M.~C. Zelati and T.~D. Drivas.
\newblock A stochastic approach to enhanced diffusion.
\newblock {\em Ann. Sc. Norm. Pisa Cl. Sci.}, 22:811--834, 2021.

\bibitem{Zillinger19}
C.~Zillinger.
\newblock On geometric and analytic mixing scales: comparability and
  convergence rates for transport problems.
\newblock {\em Pure and Applied Analysis}, 1(4):543--570, 2019.

\bibitem{Zlatos10}
A.~Zlato{\v{s}}.
\newblock Diffusion in fluid flow: dissipation enhancement by flows in {2D}.
\newblock {\em Communications in Partial Differential Equations},
  35(3):496--534, 2010.

\bibitem{YaoZlatos17}
A.~Zlato{\v{s}} and Y.~Yao.
\newblock Mixing and un-mixing by incompressible flows.
\newblock {\em Journal of the European Mathematical Society}, 19(7):1911--1948,
  2017.

\end{thebibliography}
	
\end{document}